\documentclass[a4paper,11pt]{amsart}
\usepackage{mathabx}
\usepackage[utf8]{inputenc}

\usepackage{a4wide}
\usepackage{amsmath}
\usepackage{amsthm}
\usepackage{amsopn}
\usepackage{amsrefs}
\usepackage{url}
\usepackage{hyperref}
\usepackage[color=green!30]{todonotes}
\usepackage{import}

\usepackage{tikz}
\usetikzlibrary{fit,matrix,arrows}
\usepackage[all]{xy}

\newtheorem{theorem}[equation]{Theorem}%[section]
\newtheorem{proposition}[equation]{Proposition}
\newtheorem{lemma}[equation]{Lemma}
\newtheorem{corollary}[equation]{Corollary}
\theoremstyle{definition}
\newtheorem{definition}[equation]{Definition}
\newtheorem{example}[equation]{Example}

\theoremstyle{remark}
\newtheorem{remark}[equation]{Remark}

\theoremstyle{remark}
\newtheorem*{ack}{Acknowledgments}
\numberwithin{equation}{section}
%basic algebra
\def\Z{\mathbb{Z}}
\def\Q{\mathbb{Q}}

\def\wreath{\boldsymbol{\iota_r}}
\def\redu{$(\mathrm{mod}\ p)$-reduction}
\def\spair{\beta}
\def\lpair{\lambda}

\newcommand{\rank}{\operatorname{rank}}

\DeclareMathOperator{\ord}{ord}

\DeclareMathOperator{\Fix}{Fix}
\newcommand{\One}{\operatorname{1 \kern-3.75pt 1}}
\DeclareMathOperator{\rk}{rk}
\DeclareMathOperator{\Syl}{Syl}
\newcommand{\Sq}{\Syl_q}
\newcommand{\ol}[1]{\overline{#1}}
\newcommand{\wt}[1]{\widetilde{#1}}
\title{Finite orthogonal groups and periodicity of links}
\author[M. Borodzik]{Maciej Borodzik}
\address{Institute of Mathematics, University of Warsaw, ul. Banacha 2,
02-097 Warsaw, Poland}
\email{mcboro@mimuw.edu.pl}
%\thanks{The authors would like to thank each other for good cooperation}

\author[P. Grabowski]{Przemysław Grabowski}
\address{Institute of Mathematics, University of Warsaw, ul. Banacha 2,
02-097 Warsaw, Poland}
\email{pg370922@students.mimuw.edu.pl}
\author[A. Król]{Adam Król}
\address{Institute of Mathematics, University of Warsaw, ul. Banacha 2,
02-097 Warsaw, Poland}
\email{adam.krol@student.uw.edu.pl}
\author[M. Marchwicka]{Maria Marchwicka}
\address{Department of Mathematics and Computer Science, Adam Mickiewicz University in Poznań,
  ul.~Umultowska 87, 61-614 Poznań, Poland }
\email{maria.marchwicka@amu.edu.pl}

\date{\today}
%\makeatletter
%\@namedef{subjclassname@2010}{\textup{2010} Mathematics Subject Classification}
%\makeatother
\subjclass[2010]{primary: 57M25. } %, secondary: }
\keywords{periodic links, linking forms, orthogonal groups}

\begin{document}

\begin{abstract}
  For a prime number $q\neq 2$ and $r>0$ we study, whether there exists an isometry of order $q^r$ acting
  on a free $\Z_{p^k}$-module equipped with a scalar product. We investigate, whether there exists such an isometry
  with no non-zero fixed points. Both questions are completely answered in this paper if $p\neq 2,q$. As an
  application we refine Naik's criterion for periodicity of links in $S^3$. The periodicity criterion we obtain
  is effectively computable and gives concrete restrictions for periodicity of low-crossing knots.
\end{abstract}
\maketitle

\section{Introduction}
Let $L\subset S^3$ be a link. We say that $L$ is $m$-periodic if there exists an orientation-preserving 
diffeomorphism $\phi\colon S^3\to S^3$
such that $\phi^m=id$, $\phi^j\neq id$ for $j<m$, $\phi(L)=L$ and $L$ is disjoint from the rotation axis of $\phi$. One asks which links are periodic and how is the
periodicity reflected in the symmetry of link invariants.

Classical obstructions to knot periodicity include \cite{DavisLivingston,HillmanLivingstonNaik,Murasugi,Naik0,Naik,Przytycki-periodic,Traczyk,Traczyk2}.
Several new criteria appeared recently, based mostly on homological invariants, like \cite{BorodzikPolitarczyk,JabukaNaik,Politarczyk-Jones}, but
also on the study of fundamental groups \cite{Chen}.

Among known periodicity criteria, the criterion of Naik \cite{Naik} is very effective in obstructing
periodicity. 
It relies on an elementary observation that if $L$ is $m$-periodic, then the $\Z_m$-symmetry of the link induces a $\Z_m$ action on the homology of a branched cover,
and this action preserves the linking form. A non-trivial result relates the fixed point set of the action to the 
homology of the quotient. 
Naik's criterion relies
on understanding, which finite abelian groups (or which linking forms) admit an action with no non-zero fixed points; see Section~\ref{sec:naik}
for more details. The goal of this article is to strengthen Naik's criterion.

Before we state our main result, let us introduce some terminology.
\begin{definition}\label{def:epsilonq}
  Let $p$ be a prime number and $m$ an odd number coprime with $p$. We denote by $[m|p]$ the minimal positive exponent $s$ such that either $m|(p^s-1)$ 
  or $m|(p^s+1)$. We define $\eta(m)=1$ if $m|(p^{[m|p]}-1)$ and $\eta(m)=-1$ if $m|(p^{[m|p]}+1)$. 
\end{definition}
\begin{remark}
  We note that $\eta(m)=-1$ if and only if $p$ is a root of $-1$ in $\Z_m$, that is, if some power of $p$ is equal to $-1$ in $\Z_m$.
  Moreover, for any $n>0$, $m$ divides $p^{n[m|p]}-\eta(m)^n$.
\end{remark}
We recall also that if $p\neq 2$ and $k>0$, a free $\Z_{p^k}$-module of rank $n$ can be equipped with two non-equivalent symmetric bilinear forms with
values in $\Z_{p^k}$, these are distinguished by a sign $\epsilon$, which we introduce rigorously in Definition~\ref{def:epsilonform}.

Our new criterion is based on a result that gives even more understanding of cyclic group actions on linking (or bilinear) forms.
We believe
that it is of independent interest. 

%which is the technical core of the present paper. It is of independent interest.
\begin{theorem}\label{thm:main}
  Let $p\neq q$ be odd prime numbers. Let $B$ be a free $\Z_{p^k}$-module of rank $n>0$, for some $k>0$. Let $\spair\colon B\times B\to\Z_{p^k}$
  be a non-degenerate symmetric bilinear form. %
  %Let $\eta(q),\epsilon(B,\spair)\in\pm 1$ be
  %as in Definitions \ref{def:epsilonq} and \ref{def:epsilonform} below. 
  Let $s$ be the maximal integer such that $q^s|(p^{2[q|p]}-1)$.
  Fix an integer $r>0$ and set
  $\wt{r}=\max(r-s,0)$.
  \begin{itemize}
    \item[(a)] There exists an isometry of $(B,\spair)$ of order $q^r$ if and only if $n\ge 2[q|p]q^{\wt{r}}+1$, or $n=2[q|p]q^{\wt{r}}$ and $\epsilon(B,\spair)=\eta(q)$.
    \item[(b)] There exists an isometry of $(B,\spair)$ of order $q^r$ with no non-zero fixed points if and only if $n=2[q|p]d$ for $d\ge q^{\wt{r}}$
      and $\epsilon(B,\spair)=\eta(q)^d$.
  \end{itemize}
\end{theorem}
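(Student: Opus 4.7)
The plan is to first reduce to the case $k=1$, by establishing a correspondence between isometries of order $q^r$ of $(B,\spair)$ and of its reduction $(B\otimes\F_p,\bar\spair)$. Reduction modulo $p$ trivially sends one to the other; conversely, the kernel of this reduction on isometry groups is a $p$-group (its elements have the form $I+pA$ for some matrix $A$, and a direct binomial expansion shows $(I+pA)^{p^k}=I$ in $\Z_{p^k}$ when $p$ is odd). Since $\gcd(q^r,p)=1$, a Hensel-type / Schur--Zassenhaus argument then lifts any cyclic subgroup of order $q^r$ downstairs to one of the same order upstairs, and the fixed-point-free property transfers because a nonzero $\Z_{p^k}$-fixed vector reduces to a nonzero $\F_p$-fixed vector by Nakayama.

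\textbf{Module decomposition over $\F_p$.} Let $\phi$ be an isometry of the reduced form $(V,\bar\spair)$ over $\F_p$ of order $q^r$. Since $\gcd(q,p)=1$, the polynomial $x^{q^r}-1=\prod_{i=0}^r\Phi_{q^i}(x)$ is separable over $\F_p$, so $\phi$ is semisimple and $V$ splits as a $\bar\spair$-orthogonal sum $V=\bigoplus_{i=0}^r V_i$ of $\Phi_{q^i}$-isotypic components (the form pairs the $\zeta$-eigenspace with the $\zeta^{-1}$-eigenspace, so distinct cyclotomic pieces are orthogonal). For $i\ge 1$, the degree of an irreducible factor of $\Phi_{q^i}$ over $\F_p$ equals the order of $p$ in $(\Z/q^i)^*$: this is $[q|p]$ if $\eta(q)=1$, is $2[q|p]$ if $\eta(q)=-1$ (both for $i\le s$), and is multiplied by $q^{i-s}$ when $i>s$. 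An irreducible factor $f$ of $\Phi_{q^i}$ is self-reciprocal iff $-1\in\langle p\rangle$ in $(\Z/q^i)^*$, equivalently iff that order is even. Combining these observations, the minimal $\phi$-invariant non-degenerate subspace of $V_i$ has dimension $2[q|p]\,q^{\max(i-s,0)}$: either a single self-reciprocal factor (when $\eta(q)=-1$), or a $\phi$-equivariant ``hyperbolic'' pair consisting of a factor and its reciprocal (when $\eta(q)=1$, where $[q|p]$ is then odd).

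\textbf{Discriminant of a minimal block.} The key technical step is to show that each minimal block described above has $\epsilon$-invariant equal to $\eta(q)$. One works with explicit models: on a self-reciprocal block, identify the space with $\F_p[\zeta]$ for $\zeta$ a primitive $q^i$-th root of unity and write the $\phi$-invariant form as $(a,b)\mapsto\tr_{\F_p[\zeta]/\F_p}(a\sigma(b)c)$, where $\sigma$ is the unique order-$2$ Galois automorphism and $c\in\F_p[\zeta]^{\sigma}$ is fixed; on a hyperbolic block, identify the two summands $\F_p[x]/(f)$ and $\F_p[x]/(f^*)$ via the natural duality induced by $\phi^{-1}$. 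A determinant computation in these bases yields $\epsilon=\eta(q)^{q^{\max(i-s,0)}}$, which collapses to $\eta(q)$ because $q$ is odd.

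\textbf{Concluding (a) and (b).} For part (a), realising an isometry of exact order $q^r$ requires at least one $\Phi_{q^r}$-block, contributing dimension $2[q|p]q^{\wt r}$ and discriminant $\eta(q)$; any remaining rank is absorbed into one-dimensional $V_0$-blocks whose discriminants can be chosen freely in $\{\pm 1\}$. Hence existence forces $n\ge 2[q|p]q^{\wt r}$, with the constraint $\epsilon=\eta(q)$ only in the boundary case $n=2[q|p]q^{\wt r}$, recovering the stated condition. For part (b), the fixed-point-free requirement removes all $V_0$-blocks, so $V$ is an orthogonal sum of $d$ minimal blocks of total rank $2[q|p]d$, with $d\ge q^{\wt r}$ to accommodate a $\Phi_{q^r}$-block, and multiplicativity of discriminants gives $\epsilon=\eta(q)^d$. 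The main obstacle will be Step~3 (the discriminant computation), particularly tracking the sign in the hyperbolic case where $[q|p]$ is odd and in the ``large'' case $i>s$ where one must see that the $q^{i-s}$-th power of $\eta(q)$ cannot introduce spurious sign changes.
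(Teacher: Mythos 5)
Your plan is genuinely different from the paper's proof, and as a strategy it is sound. The paper works group-theoretically: it computes orders of orthogonal groups (Theorem~\ref{thm:rank}), shows the \redu{} induces an isomorphism on Sylow $q$-subgroups (Lemma~\ref{lem:hensel}, Theorem~\ref{thm:reduction}), invokes Weir's description of $\Sq O(B)$ as a product of iterated wreath products (Theorem~\ref{thm:weir}), and extracts part (a) from the maximal order of elements in wreath products; part (b) then comes from a Sylow-conjugation argument (Theorem~\ref{thm:fixedexist}) and an indirect argument in the minimal-rank case (Lemma~\ref{lem:specialcase}: a non-zero fixed point would split off a subform too small to carry order $q^r$). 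You instead argue representation-theoretically over $\F_p$: cyclotomic isotypic decomposition, degrees of irreducible factors of $\Phi_{q^i}$, self-reciprocality governed by $\eta(q)$, and a discriminant computation for the minimal invariant blocks. Your route has the merit of being self-contained (no appeal to Weir) and of making the appearance of $\eta(q)$ transparent; the paper's route buys exactly the opposite: by hiding the sign in Weir's hypothesis $\epsilon(B)=\eta(q)^d$ and in the factor $(p^m-\epsilon)$ of the order formula, it never has to compute a discriminant of a trace form. Your bookkeeping for (a) and (b) (boundary case forces a single block; fixed-point-freeness kills $V_0$; multiplicativity of $\epsilon$ on even-rank summands as in Lemma~\ref{lem:isaddit}; parity of $d$ equals parity of the number of blocks because $q$ is odd) is correct, including the case of two copies of a self-dual simple module pairing hyperbolically, which contributes an even amount to $d$ and $\epsilon=+1$, consistently.

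Two points need attention. First, your Step 3 is asserted rather than proved, and it is precisely where the new content of the theorem lies. In the $\eta(q)=+1$ case it is immediate (a block that is a sum of two totally isotropic simple summands is split, hence $\epsilon=+1$), but in the $\eta(q)=-1$ case you must actually show that every non-degenerate $\phi$-invariant form on the simple module $\F_{p}[\zeta]\cong\F_{p^{2m}}$, i.e.\ every form $(a,b)\mapsto\tr(c\,a\,\sigma(b))$ with $c\in\F_{p^m}^\times$, has minus type; equivalently, that a cyclic subgroup of order $q^i$ with $q^i\mid p^m+1$, acting irreducibly, sits only in $O^-(2m,p)$. This is a known but non-trivial computation (the discriminant of the trace form must be evaluated and compared with the $\epsilon_2$ normalisation of Definition~\ref{def:epsilonform}), and without it your argument does not yet establish either direction of the $\epsilon$-condition. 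Second, your transfer of fixed points from $\Z_{p^k}$ to $\F_p$ "by Nakayama" is stated incorrectly: the reduction of a non-zero fixed vector can vanish (see the example after \eqref{eq:Amatrix}). The statement is still true, but you must first divide the fixed vector by the largest power of $p$ it contains, as in Lemma~\ref{lem:fixedpasses}; with that fix, and with surjectivity of $O(B)\to O(B_{red})$ (your Schur--Zassenhaus/Hensel step, which is Lemma~\ref{lem:hensel}), your reduction to the field case is complete.
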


The proof of Theorem~\ref{thm:main} stretches for  Sections~\ref{sec:groups} -- %, \ref{sec:rank}, \ref{sec:sylow}, \ref{sec:maximal}, \ref{sec:fixed} and 
\ref{sec:action}. Sections~\ref{sec:groups} -- \ref{sec:maximal} deal with part (a), while Sections~\ref{sec:fixed} and~\ref{sec:action} take care with part (b).

In more detail, in Section~\ref{sec:groups} we recall the Wall's result on classification of symmetric bilinear forms on free $\Z_{p^k}$-modules. We recall
the definition of the index $\epsilon(B,\spair)$, which is crucial in our applications. In Section~\ref{sec:rank} we recall a classical result on the rank of the group
of isometries of a symmetric bilinear form on a $\Z_{p}$-module. We introduce an important concept of a \redu, which allows us to translate various
results on forms on $\Z_p$-modules to forms on $\Z_{p^k}$-modules for $k>1$. One of such results is the calculation of the Sylow $q$-group of isometries by Weir \cite{Weir},
stated in Section~\ref{sec:sylow}, which is shown to hold for isometries of forms on $\Z_{p^k}$-modules by Theorem~\ref{thm:reduction}.

The description of the Sylow $q$-groups in Theorem~\ref{thm:weir} is given in terms of the wreath product, see Definition~\ref{def:wreath}. In Section~\ref{sec:maximal}
we use an elementary properties of the wreath product to find the maximal $r$ such that the Sylow $q$-group of the group of isometries contains an element of order $q^r$.
Section~\ref{sec:maximal} is concluded with Theorem~\ref{21}, which contains the (a) part of Theorem~\ref{thm:main}. 

Part (b) of Theorem~\ref{thm:main} is proved in Sections~\ref{sec:fixed} and~\ref{sec:action}. It is essentially done on a case by case analysis. We begin
with showing that if the conditions $n=2[q|p]d$, $d\ge q^{\wt{r}}$ and $\epsilon(B,\spair)=\eta(q)^d$ are not satisfied, than any isometry of order $q^{\wt{r}}$ must have a
non-zero fixed point.
This is the statement of Theorem~\ref{thm:fixedexist} in Section~\ref{sec:fixed}. Section~\ref{sec:action} is devoted to a construction of an isometry with no non-zero fixed points. 
We perform the construction separately in the case $\wt{r}=r-s$ and in the case $\wt{r}=0$ (i.e. $r\le s$).
The key
results are Lemmas~\ref{lem:specialcase} and~\ref{lem:notsospecialanymore}, which show that if $n=2[q|p]q^{r-s}$ and $\epsilon(B,\spair)=\eta(q)^{q^{r-s}}$, then every isometry of order $q^{r-s}$ has only zero as its fixed point. Lemma~\ref{lem:specialcase} deals with the case of $\Z_{p}$-forms and Lemma~\ref{lem:notsospecialanymore} takes care of general $\Z_{p^k}$-forms
via the \redu.  Lemma~\ref{lem:notsospecialafterall} deals with the case $\wt{r}=0$, but the key argument is essentially the same as in Lemma~\ref{lem:specialcase}.
The three lemmas are used to give a proof of Theorem~\ref{thm:inverse}. This theorem concludes the proof of Theorem~\ref{thm:main}.

The main application of Theorem~\ref{thm:main} in our paper is the
following refinement of Naik's theorem \cite[Theorem 5]{Naik}.
\begin{theorem}\label{thm:app}
  Let $L$ be a $q^r$-periodic link with $r\ge 1$ and $q$ an odd prime. 
  Let $T_p$ be the $p$-torsion subgroup of $H_1(\Sigma_k(L);\Z)$, where $\Sigma_k(L)$ is a $k$-fold
  branched cyclic cover 
  for $k>1$ and $p\neq q$ an odd prime. 

  \begin{itemize}
    \item[(a)]
  If $T_p$ is non-trivial, then $T_p$ splits as an sum $T_{p,1}\oplus T_{p,2}\oplus\dots$ of pairwise orthogonal
  summands. A summand $T_{p,i}$ is a free $\Z_{p^i}$-module with linking form $\lpair_{p,i}$. 
  The $\Z_{q^r}$-symmetry of $L$ yields an action
  of $\Z_{q^r}$ on $T_p$ preserving the orthogonal splitting.
\item[(b)]
  Suppose the first homology of the $k$-fold cover of the quotient link $\Sigma_{k}(L/\Z_{q^r})$ has no $p$-torsion.
  If $T_{p,i}$ is non-trivial, then there exists an integer $d_i\ge q^{r-s}$ such that $\rank T_{p,i}=2[q|p]d_i$ and 
 % then the rank of each of the $T_{p,i}$ , is of form $2[q|p]d_i$, with $d_i\ge q^{r-s}$ and 
  $\epsilon(T_{p,i})=\eta(q)^{d_i}$.
  \end{itemize}
\end{theorem}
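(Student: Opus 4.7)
The plan for part (a) is structural. The orthogonal decomposition $T_p=T_{p,1}\oplus T_{p,2}\oplus\dots$ into free $\Z_{p^i}$-modules with non-degenerate linking forms is Wall's classification of linking forms restricted to the $p$-primary part (which for $p$ odd is well-behaved). For the group action, I would lift the periodicity $\phi\colon S^3\to S^3$ to a self-diffeomorphism $\wt{\phi}$ of $\Sigma_k(L)$, which is possible because $\phi(L)=L$; the induced map on $H_1(\Sigma_k(L);\Z)$ preserves the linking form because $\phi$ is orientation preserving. Restricting the action to $T_p$ and exploiting semi-simplicity of $\Z_{q^r}$-representations on $p$-primary abelian groups (available because $\gcd(p,q)=1$ makes $|\Z_{q^r}|$ invertible, so the analogue of Maschke's theorem applies), one can arrange the orthogonal splitting to be preserved by $\Z_{q^r}$.

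For part (b), the plan is to first show that the $\Z_{q^r}$-action on each $T_{p,i}$ has no non-zero fixed points, and then invoke Theorem~\ref{thm:main}(b). The geometric input is the identification of branched covers $\Sigma_k(L)/\Z_{q^r}\cong\Sigma_k(L/\Z_{q^r})$, valid because $\wt{\phi}$ commutes with the deck group of the $k$-fold branched cover. Since $q^r$ is invertible in $\Z_{(p)}$, a standard transfer argument yields a natural isomorphism between the $p$-primary part of $H_1(\Sigma_k(L/\Z_{q^r});\Z)$ and the $\Z_{q^r}$-invariants of $T_p$; the hypothesis forces the left-hand side to vanish, whence $T_{p,i}^{\Z_{q^r}}=0$ for every $i$. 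Given this, the generator of $\Z_{q^r}$ acts on each non-trivial summand $T_{p,i}$ as an isometry of $\lpair_{p,i}$ of some order $q^{r_i}\le q^r$ with no non-zero fixed points. Theorem~\ref{thm:main}(b), applied to this isometry, yields $\rank T_{p,i}=2[q|p]d_i$ and $\epsilon(T_{p,i})=\eta(q)^{d_i}$ for some $d_i\ge q^{\max(r_i-s,0)}$, which matches the claim under the convention that $q^{r-s}$ is read as $q^{\wt{r}}$.

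The main obstacle is the transfer step: one must carefully identify $\Sigma_k(L)/\Z_{q^r}$ with $\Sigma_k(L/\Z_{q^r})$ as branched covers, and verify via Smith theory or an equivariant transfer in cohomology that the $\Z_{q^r}$-invariants of $T_p$ coincide with the $p$-primary part of $H_1$ of the quotient. Once this geometric-to-algebraic bridge is in place, and the compatibility of the splitting with the action is checked, the conclusion of part (b) follows immediately from Theorem~\ref{thm:main}(b).
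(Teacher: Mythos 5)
Your proposal is correct in outline and follows essentially the same route as the paper: part (a) is the invariant orthogonal decomposition, which the paper establishes in Proposition~\ref{prop:decomp} by an explicit orbit construction (note that your appeal to ``semi-simplicity'' of $\Z_{q^r}$-actions on $p$-primary groups is not literally available, since the category of finite abelian $p$-groups is not semisimple; what works is averaging a retraction, using that $q^r$ is invertible mod $p$, or the paper's orbit argument), and part (b) reduces, exactly as in the paper, to the statement that $\Z_{q^r}$ acts on $T_p$ with no non-zero fixed points, followed by Lemma~\ref{lem:linkingtoortho} and Theorem~\ref{thm:main}(b). The only genuine difference is that you propose to re-prove the geometric inputs --- the identification $\Sigma_k(L)/\Z_{q^r}\cong\Sigma_k(L/\Z_{q^r})$ and the identification of $\Fix\phi_*$ on the $p$-primary part with the homology of the quotient cover --- by a transfer/Smith-theory argument, whereas the paper simply cites these as Lemma~\ref{lem:topquotient} and Proposition~\ref{prop:fixed} from Naik's work; your transfer argument is the standard proof of that cited result, so this is a difference of provenance, not of strategy. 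One caveat you flag but then wave away: the restriction of the generator to a single summand $T_{p,i}$ may a priori have order $q^{r_i}<q^r$, in which case Theorem~\ref{thm:main}(b) only yields $d_i\ge q^{\max(r_i-s,0)}$ rather than the stated $d_i\ge q^{r-s}$; the paper's own proof passes over this same point silently, so it is not a defect of your argument relative to the paper, but the bound you actually derive is the one with $r_i$, not $r$.
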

\begin{proof}
  Part (a) is proved as Proposition~\ref{prop:decomp}. To prove part (b) we use Proposition~\ref{prop:fixed} to conclude
  that $\Z_{q^r}$ acts on $T_p$ with no non-zero fixed points. Then by part (a) we know that $\Z_{q^r}$ acts on each of the $T_{p,i}$
  preserving the linking form and with no non-zero fixed points.

  Now $T_{p,i}$ is a free $\Z_{p^i}$-module and by the results of Section~\ref{sec:groups} a linking form $T_{p,i}\times T_{p,i}\to\Q/\Z$
  induces a symmetric bilinear form on $T_{p,i}$ with values in $\Z_{p^i}$. Isometries of the linking form are isometries of the 
  symmetric bilinear forms, see Lemma~\ref{lem:linkingtoortho}. Thus we obtain an action of $\Z_{q^r}$ on
  $T_{p,i}$ equipped with a symmetric bilinear form, the action is by isometries and with no non-zero fixed points. We conclude by Theorem~\ref{thm:main}.
\end{proof}

Our result extends \cite[Theorem 5]{Naik} in the following two ways.
\begin{itemize}
  \item While Naik's result deals with links admitting a $\Z_q$-action with $q$ prime, we extend it to links that admit an action of $\Z_{q^r}$ for $r>1$.
  \item Our new condition $\epsilon(T_{p,i})=\eta(q)^{d_i}$ rules out approximately half of linking forms that can appear as linking forms
    of a periodic link such that the quotient has no $p$-torison. This is a substantial strengthening of Naik's criterion, as shown in Section~\ref{sec:statistics}. 
\end{itemize}
\begin{example}
  As explained in detail in Section~\ref{sec:statistics}, the knots
  $13n3659$,
$14n908$,
$14n913$,
$14n2451$,
$14n2458$,
$14n6565$,
$14n9035$,
$14n11989$,
$14n14577$,
$14n23051$ and
$14n24618$ 
  pass Naik's periodicity criterion for period $3$, but are shown not to be $3$-periodic by Theorem~\ref{thm:app}.
\end{example}

%The structure of the paper is as follows. 
%In Section~\ref{sec:naik} we recall Naik's result in details. Section~\ref{sec:groups} recalls basics on linking forms
%and pairings. This section gives a translation between Theorem~\ref{thm:main} and Theorem~\ref{thm:app}. Given results in that section,
%Theorem~\ref{thm:app} is an immediate consequence of Theorem~\ref{thm:main}. We also prove an important Lemma~\ref{lem:hensel} concerning
%the rank of the group of orthogonal forms over $\Z_{p^k}$. The $\pmod{p}$ reduction
%used in this paper allows us to study
%$p^k$-forms by using methods for $p$-forms. In Section~\ref{sec:sylow} we invoke a result of Weir's about Sylow subgroups
%of orthogonal groups over $\Z_p$. Via the$\pmod{p}$ reduction process of Section~\ref{sec:groups} we extend this result
%for groups of isometries of $p^k$-forms. We use then these results to calculate the maximal order of a cyclic
%$q$-subgroup of the orthogonal group over $\Z_{p^k}$, see Theorem~\ref{21}.
%
%In Section~\ref{sec:fixed} we use group theoretical methods to study fixed points of isometries of forms. Theorem~\ref{thm:fixedexist}
%shows sufficient condition of the rank and index of a $p^k$-form that guarantee that each isometry of order $q^r$ has a fixed point. This gives a one way implication
%of Theorem~\ref{thm:main}. Section~\ref{sec:action} shows the existence of  isometries with no non-zero fixed points if the assumptions of Theorem~\ref{thm:fixedexist} are not satisfied. This gives the
%other implication of Theorem~\ref{thm:main} completing its proof.

Section~\ref{sec:examples} shows an explicit way of applying Theorem~\ref{thm:app} to obstruct periodicity. An elaborated example of a knot that actually passes the criterion
is given in Section~\ref{sec:concrete}. A comparison of our criterion with other periodicity criteria is given in Section~\ref{sec:statistics}.
\begin{ack}
  The authors would like to thank Jan Okniński for helpful discussions and for pointing to us the article of Weir.
  They are also grateful to Wojciech Politarczyk for fruitful conversations and to Chuck Livingston for his
  comments on the early draft of the paper.
  MB, PG, AK and MM are supported by the National Science Center grant 2016/22/E/ST1/00040.
\end{ack}

\section{Review of Naik's criterion}\label{sec:naik}
Let $L$ be an $m$-periodic link and let $L/\Z_m$ be the quotient link. For $k>1$ consider the branched covers $\Sigma_k(L)$ and $\Sigma_k(L/\Z_m)$.
We write $\Sigma(L),\Sigma(L/\Z_m)$ instead of $\Sigma_2(L)$ and $\Sigma_2(L/\Z_m)$.

For simplicity we will always assume that $k$ is such that $\Sigma_k(L)$ is a rational homology sphere. In particular,
there is a non-degenerate linking form $H_1(\Sigma_k(L);\Z)\times H_1(\Sigma_k(L);\Z)\to\Q/\Z$.

We have the following observation, see \cite[Section 2]{Naik0}.
\begin{lemma}\label{lem:topquotient}
  An action of $\Z_m$ on $S^3$ that preserves $L$ and whose fixed points are disjoint from $L$,
  lifts to an action of $\Z_m$ on $\Sigma_k(L)$. The quotient
  $\Sigma_k(L)/\Z_m$ is diffeomorphic to the branched cover $\Sigma_k(L/\Z_m)$ of the quotient link.
\end{lemma}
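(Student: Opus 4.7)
The plan is to construct the lifted action from a $\Z_m$-equivariant Seifert surface for $L$, and then to identify the quotient by descending the same cut-and-paste construction.

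First I would exhibit an equivariant Seifert surface. By the positive solution of the Smith conjecture, $\phi$ is smoothly conjugate to a standard rotation of $S^3$ about an unknotted circle $A$, which by hypothesis is disjoint from $L$, and $S^3/\Z_m \cong S^3$. Pick a Seifert surface $F$ for the quotient link $L/\Z_m \subset S^3/\Z_m$, made transverse to $A$, and set $\tilde F := \pi^{-1}(F)$, where $\pi\colon S^3 \to S^3/\Z_m$ is the quotient map. Then $\tilde F$ is $\Z_m$-invariant with $\partial \tilde F = L$; after a local $\Z_m$-equivariant resolution at the finite set $\pi^{-1}(F\cap A)$ (where $\tilde F$ fails to be smooth because $\pi$ branches there), one obtains a smooth equivariant Seifert surface for $L$, still denoted~$\tilde F$.

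Second, I would build $\Sigma_k(L)$ from $\tilde F$ in the standard way: take $k$ copies of $S^3$ cut along $\tilde F$ and glue them cyclically along the two sides of $\tilde F$. Because $\phi$ is orientation-preserving and $\tilde F$ is $\Z_m$-invariant, the action of $\Z_m$ on $S^3$ restricts to each of the $k$ copies and commutes with the cyclic gluing identifications, producing a $\Z_m$-action on $\Sigma_k(L)$ that lifts $\phi$. This proves the first assertion. A key point is that this construction automatically produces an action of order $m$, sidestepping the group-theoretic extension problem that arises when one instead lifts $\phi$ via the covering space criterion applied to $\ker(\pi_1(S^3 \setminus L) \to \Z_k)$.

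Third, the quotient identification is immediate from the same picture. The image $F = \tilde F / \Z_m$ is a Seifert surface for $L/\Z_m \subset S^3/\Z_m = S^3$, and $\Sigma_k(L/\Z_m)$ is assembled by cutting $S^3$ along $F$ and cyclically gluing $k$ copies. Since $\Z_m$ acts on $\Sigma_k(L)$ separately on each of the $k$ copies, the quotient is computed by first quotienting each copy of $S^3 \setminus \tilde F$ by $\Z_m$, yielding $S^3 \setminus F$, and then carrying out the cyclic gluing — exactly the construction of $\Sigma_k(L/\Z_m)$. The main obstacle in this plan is the equivariant smoothing of $\tilde F = \pi^{-1}(F)$ at the branch points $F\cap A$; I would handle this by excising a small $\Z_m$-invariant disk neighbourhood of each branch point in $\tilde F$ and filling it with an equivariant disk on which $\Z_m$ acts by rotation, or alternatively by invoking an equivariant Seifert surface theorem directly.
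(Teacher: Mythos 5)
Your construction is essentially correct, but it follows a genuinely different route from the one the paper uses: the paper gives no argument of its own and appeals to \cite[Section 2]{Naik0}, where the lift is obtained by covering-space theory --- the unbranched cover of $S^3\setminus L$ corresponds to the kernel of the total linking number homomorphism $\pi_1(S^3\setminus L)\to\Z_k$, which is preserved by $\phi_*$ since $\phi$ preserves $L$ and linking numbers; choosing the lift that fixes a point over a fixed point of $\phi$ (the axis is disjoint from $L$) forces the lift to have order exactly $m$, one then extends over the branch set, and the same homomorphism for the quotient link identifies $\Sigma_k(L)/\Z_m$ with $\Sigma_k(L/\Z_m)$. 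Your equivariant Seifert surface argument buys an explicit, manifestly order-$m$ action and makes the quotient identification transparent, at the price of invoking the (solved) Smith conjecture to standardize the action and to view $L/\Z_m$ as a link in $S^3$ --- something the paper's framework implicitly assumes anyway --- and of some care with $\wt F=\pi^{-1}(F)$. Three points you should tighten: first, if $F$ is transverse to the image of the axis then $\pi^{-1}(F)$ is already smooth (in equivariant local coordinates $\pi$ is $(z,t)\mapsto(z^m,t)$ and the preimage of a graph $t=g(w)$ is the graph $t=g(z^m)$), so your local resolution is harmless but not needed; second, $\phi$ preserves the two sides of $\wt F$ not merely because it is orientation-preserving --- an orientation-preserving map could still swap sides --- but because the co-orientation of $\wt F$ is pulled back from that of $F$ via $\pi\circ\phi=\pi$; third, $\wt F$ may be disconnected, which is fine because cutting along any oriented surface bounded by $L$ and gluing $k$ copies cyclically realizes the cover determined by the total linking number homomorphism, and the evident equivariant homeomorphism of the glued models should be smoothed in the standard way to justify the word ``diffeomorphic''.
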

The following result relates Lemma~\ref{lem:topquotient} with the group action on homology.
\begin{proposition}[see \expandafter{\cite[Proposition 2.5]{Naik0}}]\label{prop:fixed}
  If $p$ is a prime number, $p\notdivides m$ and $\phi\colon \Sigma_k(L)\to\Sigma_k(L)$ is a generator of the action
  of $\Z_m$ on $\Sigma_k(L)$, then
  $\phi_*$ is an isometry of $H_1(\Sigma_k(L))$ (with respect to the linking form) and the fixed point set
  \[\Fix\phi_*\colon H_1(\Sigma_k(L))_p\to H_1(\Sigma_k(L))_p\]
	is equal to $H_1(\Sigma_k(L/\Z_m))_p$. Here the subscript $p$ denotes the $p$-primary part.
\end{proposition}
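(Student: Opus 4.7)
The plan is to check the isometry property directly and then carry out a transfer/averaging argument to identify the $p$-primary fixed set with $H_1(\Sigma_k(L/\Z_m))_p$. The first point is immediate: $\phi$ is an orientation-preserving diffeomorphism of the rational homology sphere $\Sigma_k(L)$, and the linking form is a diffeomorphism invariant, so $\phi_*$ is automatically an isometry.

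Let $\pi\colon \Sigma_k(L)\to \Sigma_k(L)/\Z_m$ be the quotient, identified by Lemma~\ref{lem:topquotient} with $\Sigma_k(L/\Z_m)$. My next step is to produce a transfer homomorphism $\tau\colon H_1(\Sigma_k(L/\Z_m);\Z)\to H_1(\Sigma_k(L);\Z)$ satisfying
\[\pi_*\circ\tau=m\cdot \mathrm{id},\qquad \tau\circ\pi_*=\sum_{i=0}^{m-1}\phi_*^i=:N.\]
Because the branch locus of $\pi$ (the $\Z_m$-fixed set in $\Sigma_k(L)$) is $1$-dimensional, hence of real codimension at least two, one can define $\tau$ either via the classical transfer of the unramified cover obtained by excising a tubular neighborhood of the branch set, or directly via Bredon's transfer for finite group actions.

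Next, $\im N\subseteq \Fix(\phi_*)$ because $\phi_*N=N$, and on $\Fix(\phi_*)$ the operator $N$ is multiplication by $m$. Since $p\nmid m$, multiplication by $m$ is invertible on every $p$-primary abelian group, hence $\Fix(\phi_*)_p=(\im N)_p=\im(\tau\circ\pi_*)_p$. The identity $\pi_*\tau=m\cdot\mathrm{id}$, combined with the same invertibility, forces $\tau$ to be injective and $\pi_*$ to be surjective on $p$-primary parts. Consequently $\tau$ induces an isomorphism $H_1(\Sigma_k(L/\Z_m))_p\to \Fix(\phi_*)_p$, which is the asserted identification (with $\pi_*$ providing the inverse).

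The main obstacle I foresee is the honest construction of the transfer in the presence of a branch locus, so that the two identities above hold at the level of integral (or at least $p$-local) singular homology rather than only away from the branch set. Once the transfer is in place, the invertibility of $m$ on $p$-primary groups makes the remainder of the argument purely formal.
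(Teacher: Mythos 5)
The paper does not prove this proposition at all: it is quoted verbatim from Naik's earlier work (\cite[Proposition 2.5]{Naik0}), so there is no in-paper argument to compare against. Your proposal is, in substance, the standard transfer proof that underlies the cited result, and its formal skeleton is correct: the linking form is natural under orientation-preserving diffeomorphisms (and the lift $\phi$ of the orientation-preserving symmetry of $S^3$ is orientation-preserving), so $\phi_*$ is an isometry; and given the two identities $\pi_*\tau=m\cdot\mathrm{id}$ and $\tau\pi_*=N=\sum\phi_*^i$, your algebra goes through — since homomorphisms of finite(ly generated) abelian groups preserve $p$-primary parts and $m$ is invertible on them, one gets $\Fix(\phi_*)_p=(\im N)_p$, $\pi_*$ surjective and $\tau$ injective on $p$-parts, and then $\tau(H_1(\Sigma_k(L/\Z_m))_p)=\tau\pi_*\bigl(H_1(\Sigma_k(L))_p\bigr)=N\bigl(H_1(\Sigma_k(L))_p\bigr)=\Fix(\phi_*)_p$, so you do not even need the extra identity $\phi_*\tau=\tau$. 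The one genuine issue is exactly the one you flag: constructing $\tau$ with those identities in the presence of the branch locus. This is a real step but a standard one: since the action is smooth on a closed $3$-manifold, one can take an equivariant triangulation and use Bredon's transfer for arbitrary finite group actions (which yields $H_*(X/G;\Z[1/m])\cong H_*(X;\Z[1/m])^G$ with precisely the stated identities), or excise an invariant tubular neighborhood of the fixed axis preimage — a codimension-two link of circles — apply the classical transfer of the resulting free $m$-fold cover, and compare with the full manifolds via Mayer--Vietoris. Either route closes the gap, so your proof is correct once that construction is carried out; it is also, as far as the literature goes, essentially the same argument as Naik's original one rather than a new route.
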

Proposition~\ref{prop:fixed} is non-trivial and the condition $p\notdivides m$, in general cannot be relaxed. For example, if $K=T(2,5)$, then $K$ is clearly $5$-periodic.
We have $H_1(\Sigma_2(K))=\Z_5$. The only action of $\Z_5$ on $\Z_5$ by isometries is trivial (because any isometry of $\Z_5$ has order $1$ or $2$), 
hence $\Z_5$ is a fixed subspace of this action. However
the quotient knot $K/\Z_5$ is trivial, $H_1(\Sigma_2(K/\Z_5);\Z)=0$.

In order to apply Proposition~\ref{prop:fixed} we use the following result.
\begin{proposition}\label{prop:decomp}
  For an odd prime number $p$, that is coprime with $m$, the group $T=H_1(\Sigma_k(L))_p$ decomposes as a sum $T_{p,1}\oplus T_{p,2}\oplus\dots$,
  where each of the $T_{p,i}$ is a free $\Z_{p^i}$-module. The decomposition is orthogonal with respect to the linking form. Moreover, if $\Z_m$
  acts on $T$ by isometries, then the decomposition can be made invariant with respect to this action.
\end{proposition}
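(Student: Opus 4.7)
The proposition makes two assertions: an orthogonal decomposition $T=\bigoplus_i T_{p,i}$ with each $T_{p,i}$ free over $\Z_{p^i}$, and the $\Z_m$-invariance of this decomposition when $\Z_m$ acts by isometries. The first is immediate from Wall's classification of non-degenerate symmetric bilinear forms on finite abelian $p$-groups for $p$ odd, recalled in Section~\ref{sec:groups}: every such form splits orthogonally into sub-forms supported on free $\Z_{p^i}$-modules. Grouping sub-forms by a common $i$ defines $T_{p,i}$; the ranks of the summands are intrinsic to $T$ (they are its Ulm invariants), so they do not depend on the decomposition chosen.

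For the $\Z_m$-equivariance the crucial input is $\gcd(m,|T|)=1$, since $|T|$ is a power of $p$ and $p\nmid m$. This makes $m$ invertible in $\Z_{p^h}$ (where $p^h=\exp T$), so the averaging operator $\frac{1}{m}\sum_{g\in\Z_m}g$ is available on $T$ and on all its $p$-primary subquotients; equivalently, the group algebra $\Z_{p^h}[\Z_m]$ is \'etale over $\Z_{p^h}$ and splits as a product of local rings indexed by Galois orbits of $m$-th roots of unity. Consequently $T$ admits a canonical $\Z_m$-invariant isotypic decomposition, and since $\lpair$ pairs the $\chi$-isotypic component with the $\chi^{-1}$-isotypic component, this decomposition is automatically compatible with orthogonality. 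On each resulting orthogonal piece, either a pair $T^\chi\oplus T^{\chi^{-1}}$ or a self-dual $T^\chi$ with $\chi^2=1$, I would apply Wall's classification a second time to produce the refinement into free $\Z_{p^i}$-summands, which is $\Z_m$-equivariant by construction.

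The main obstacle is ensuring that this second application of Wall's classification respects the extra $\Z_{p^h}[\Z_m]$-module structure on each isotypic piece. A more concrete alternative is to induct on $\exp T$: construct a $\Z_m$-invariant free $\Z_{p^h}$-submodule $T_{p,h}\subset T$ with $p^{h-1}T_{p,h}=p^{h-1}T$ by first lifting any $\F_p$-basis of the canonical subgroup $p^{h-1}T$ to elements of order $p^h$ in $T$ (the $\Z_{p^h}$-span of such a lift is automatically free of rank $n_h$), and then equivariantizing via the averaging operator. One then verifies that $T=T_{p,h}\oplus T_{p,h}^\perp$ orthogonally with $T_{p,h}^\perp$ of exponent at most $p^{h-1}$, using the non-degeneracy of the pairing $p^{h-1}T\times T/pT\to\F_p$ induced by $\lpair$ to guarantee that the restriction of $\lpair$ to $T_{p,h}$ is non-degenerate, and applies the induction hypothesis to the smaller-exponent complement $T_{p,h}^\perp$.
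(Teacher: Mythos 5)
Your overall strategy (equivariant decomposition first, then orthogonalization by induction on the exponent) could be made to work, but as written it has a genuine gap at its central step, the one you call ``equivariantizing via the averaging operator''. Averaging acts on maps or on individual elements, not on submodules: if $W\subset T$ is the (non-invariant) free $\Z_{p^h}$-span of your lifted basis, then averaging the generators $\frac{1}{m}\sum_{g}g w_j$ can destroy the property of having order $p^h$, while averaging a projection $T\to W$ produces an equivariant endomorphism that is neither idempotent nor has image that is free of rank $n_h$ (one only gets that it restricts to the identity on $p^{h-1}T$). Nor can you simply replace $W$ by the span of its $\Z_m$-orbit: a subgroup generated by elements of order $p^h$ need not be free over $\Z_{p^h}$ (for instance $\langle e,e+f\rangle=\Z_{p^2}\oplus\Z_p$). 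This is the same missing idea you flag yourself as the ``main obstacle'' in your first route: the isotypic decomposition of $T$ over $\Z_{p^h}[\Z_m]$ does not by itself make the exponent-homogeneous splitting invariant, and that invariance is precisely what has to be proved. Two ways to fill the hole: (i) use the structure of finitely generated modules over $\Z_{p^h}[\Z_m]$ with $m$ invertible (a product of Galois rings, over which every module is a direct sum of cyclic modules $R_i/p^jR_i$); its exponent-$p^h$ part is then an invariant free $\Z_{p^h}$-submodule with $p^{h-1}T_{p,h}=p^{h-1}T$, and your orthogonal-complement induction takes over; or (ii) do what the paper does: since $p$ is odd, one can choose $z$ of maximal order with $p^{h-1}\lpair(z,z)\neq 0$ and split off the span of its $\Z_m$-orbit, which is invariant by construction, so the equivariance problem never arises.

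Two smaller points. The pairing you invoke, $p^{h-1}T\times T/pT\to\F_p$, is non-degenerate only in the first variable, and left non-degeneracy does not produce a dual vector inside $T_{p,h}$; the correct tool is the induced non-degenerate $\F_p$-form on $p^{h-1}T$, $\bigl(p^{h-1}x,p^{h-1}y\bigr)\mapsto p^{h-1}\lpair(x,y)$, combined with $p^{h-1}T_{p,h}=p^{h-1}T$: this shows the Gram matrix of $\lpair|_{T_{p,h}}$ is invertible mod $p$, after which your orthogonal splitting and the smaller-exponent complement are fine. Also, Section~\ref{sec:groups} only records Wall's classification for forms on \emph{free} $\Z_{p^k}$-modules, so the non-equivariant splitting in your first paragraph is not literally quoted there; citing the general classification of linking forms on finite abelian $p$-groups is legitimate, but the equivariant refinement is the actual content of the proposition, and the paper obtains both at once by its single induction on the orbit span of a self-paired element of maximal order.
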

\begin{proof}
  The result is rather standard, we present a quick proof for the reader's convenience.
  Write $T=H_1(\Sigma_k(L))_p$ and consider
  the linking form $\lpair\colon T\times T\to\Q/\Z$. %We write $\iota\colon T\to T$ for the generator of the $\Z_m$ action on $T$ by $\phi_*$.

  For an element $x\in T$ let $r(x)$ be the minimal positive integer such that $p^{r(x)}x=0\in T$.
  Take $x\in T$ for which $r(x)$ is maximal possible. As $\lpair$ is non-degenerate, there exists $y\in T$ such that $p^{r(x)-1}\lpair(x,y)\neq 0\in\Q/\Z$,
  in fact, if for all $y\in T$ we have $p^{r(x)-1}\lpair(x,y)=0$, then $p^{r(x)-1}x=0$ pairs trivially with all $y\in T$, contradicting non-degeneracy of $\lpair$.
  Note also that we have $r(y)=r(x)$.

  Consider now $p^{r(x)-1}\lpair(x,x)$, $p^{r(x)-1}\lpair(y,y)$ and $p^{r(x)-1}\lpair(x+y,x+y)$. If all three expressions are zero in $\Q/\Z$, we conclude that
  $p^{r(x)-1}\lpair(x,y)=0$ contradicting the assumptions (here we use the assumption that $2$ is invertible modulo $p$). Thus there is an element $z\in T$
  such that $r(z)$ is maximal possible and $p^{r(z)-1}\lpair(z,z)\neq 0$. 
  
  Let $T_z$ be the $\Z$-submodule of $T$ generated by $z,\phi_*(z),\dots,\phi_*^{m-1}(z)$. The number of generators of $T_z$ is equal to $n_z$, where
  $n_z$ is the minimal $n$ such that $\phi_*^n(z)$ belongs to the subgroup generated by $z,\phi_*(z),\dots,\phi_*^{n-1}(z)$. We have an isomorphism of $\Z$-modules
  $T_z\cong \Z_{p^{r(z)}}^{n_z}$, in particular $T_z$ is a free $\Z_{p^{r(z)}}$-module.

  Let $T'$ be the orthogonal complement of $T_z$ in $T$. As the linking form is invariant under $\phi_*$, and $T_z$ is invariant, $T'$ is also invariant.
  %Furthermore we have $T=T_z\oplus T'$.

  Now $T'$ has smaller number of generators than $T$ and we repeat the procedure to split off another orthogonal component from $T'$. As a result of a recursive
  application of the procedure, we present $T$ as an orthogonal sum of modules
  $T_{z_1}\oplus \dots\oplus T_{z_m}$. We set
  \[T_{p,i}=\bigoplus_{j\colon r(z_j)=i} T_{z_j}.\]
\end{proof}
%\begin{lemma}\label{lem:lemma_iso}
%  If $\phi\colon T_p\to T_p$ is an isometry (preserves the linking form), then for any $i$ we have $\phi\colon T_{p,i}\to T_{p,i}$.
%\end{lemma}
%\begin{proof}
%\end{proof}
Next result gives a number theoretical contribution to Proposition~\ref{prop:fixed}.
It is due to Davis \cite{Davis2}, see also \cite{Naik}.
\begin{theorem}\label{thm:fix1}
  If $q$ is a prime number different than $2$ and $p$, and $\Z_q$ acts on $T_{p,i}$ with no non-zero fixed points by isometries, then the rank of $T_{p,i}$
  as a $\Z_{p^i}$-module is a multiple of $2[q|p]$.
\end{theorem}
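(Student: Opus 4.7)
The plan is to reduce the problem modulo $p$ to the case of a form over $\F_p$, and then to analyze the action through the isotypic decomposition determined by the factorization of the cyclotomic polynomial $\Phi_q(x)=x^{q-1}+\cdots+1$ over $\F_p$.

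First I would check that the hypotheses survive the mod-$p$ reduction. Since $\phi$ generates a $\Z_q$-action with no non-zero fixed points on $T_{p,i}$ and $q$ is invertible in $\Z_{p^i}$, the averaging idempotent $\tfrac{1}{q}\sum_{j=0}^{q-1}\phi^j$ projects onto the fixed subspace, hence vanishes; this yields the relation $\Phi_q(\phi)=0$. Reducing modulo $p$, $\bar\phi$ on $\bar T:=T_{p,i}/pT_{p,i}$ still satisfies $\Phi_q(\bar\phi)=0$, and since $\Phi_q(1)=q$ is a unit in $\F_p$, any fixed vector of $\bar\phi$ must vanish. The reduced bilinear form $\bar\spair$ with values in $\F_p$ is non-degenerate (the Gram matrix has unit determinant in $\Z_{p^i}$, which stays a unit after reducing modulo $p$), and $\rk_{\F_p}\bar T = \rk_{\Z_{p^i}}T_{p,i}$, so it suffices to prove the theorem when $i=1$.

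Next I would decompose $\bar T$ isotypically under $\bar\phi$. The minimal polynomial of $\bar\phi$ divides $\Phi_q(x)$, and over $\F_p$ one has $\Phi_q(x)=\prod_\alpha f_\alpha(x)$ with each $f_\alpha$ irreducible of degree $d:=\ord_q(p)$. Correspondingly $\bar T=\bigoplus_\alpha V_\alpha$ with $\dim V_\alpha$ a multiple of $d$. The isometry condition $\bar\spair(\bar\phi v,\bar\phi w)=\bar\spair(v,w)$, after extending scalars to $\bar\F_p$ and comparing eigenvalues, implies that $\bar\spair(V_\alpha,V_\beta)\neq 0$ forces the Frobenius orbit associated with $V_\beta$ to be the inverse of that associated with $V_\alpha$; equivalently, $f_\beta$ is a unit multiple of the reciprocal polynomial $f_\alpha^{*}(x):=x^{d}f_\alpha(1/x)$.

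Finally I would distinguish the two cases using $\eta(q)$. If $\eta(q)=-1$, then some power of $p$ is $-1$ modulo $q$, so every Frobenius orbit on the primitive $q$-th roots of unity is closed under inversion; each $f_\alpha$ is self-reciprocal, and since $d=2[q|p]$ in this case, each $V_\alpha$ carries a non-degenerate restricted form and has dimension a multiple of $2[q|p]$. If $\eta(q)=1$, then $d=[q|p]$ and $f_\alpha\neq f_\alpha^{*}$ for every $\alpha$; the non-degeneracy of $\bar\spair$ pairs $V_\alpha$ with $V_{\alpha^{*}}$ (the block whose irreducible factor is $f_\alpha^{*}$), forces $\dim V_\alpha=\dim V_{\alpha^{*}}$, and the combined dimension is a multiple of $2d=2[q|p]$. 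Summing over the blocks gives the divisibility asserted in the theorem. The main obstacle I anticipate is the last paragraph: verifying cleanly that $V_\alpha\perp V_\beta$ unless $f_\beta=f_\alpha^{*}$, and that non-degeneracy on a paired block forces matching dimensions. Both require a careful eigenvalue bookkeeping over $\bar\F_p$, but no deeper input; the mod-$p$ reduction and the factorization of $\Phi_q$ over $\F_p$ are standard once the averaging identity $\Phi_q(\phi)=0$ is in hand.
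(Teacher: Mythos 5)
Your proof is correct. For comparison: the paper itself gives no proof of Theorem~\ref{thm:fix1} --- it is quoted as a result of Davis \cite{Davis2} (see also \cite{Naik}) --- and the paper's own, stronger statement (Theorem~\ref{thm:main}) is established by quite different machinery: Wall's classification of $p^k$-forms, the order formula of Theorem~\ref{thm:rank} combined with the mod-$p$ reduction of orthogonal groups (Lemma~\ref{lem:hensel}, Theorem~\ref{thm:reduction}), and Weir's identification of the Sylow $q$-subgroups as products of iterated wreath products (Theorem~\ref{thm:weir}); rank restrictions are then read off from orders of elements (Theorem~\ref{21}) and from conjugating a given isometry into the subgroup $O(B')$ of the maximal $q$-regular subform (Theorem~\ref{thm:fixedexist}). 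Your argument is instead the classical direct one, essentially the Davis--Naik proof: the vanishing of the averaging idempotent gives $\Phi_q(\phi)=0$, which passes to the reduction and kills fixed vectors there because $\Phi_q(1)=q$ is a unit mod $p$; semisimplicity then yields the isotypic decomposition along the irreducible factors of $\Phi_q$ over $\F_p$, each of degree $\ord_q(p)$, the eigenvalue relation $\spair(v,w)=\zeta\xi\,\spair(v,w)$ pairs each block with its reciprocal block, and the dichotomy $\ord_q(p)=2[q|p]$ when $\eta(q)=-1$ versus $\ord_q(p)=[q|p]$ with distinct blocks pairing off in equal dimensions (by non-degeneracy) when $\eta(q)=1$ gives divisibility of the rank by $2[q|p]$; all the steps you outline (the reduction, the factorization of $\Phi_q$, the orthogonality of non-reciprocal blocks, and the equality of dimensions of paired blocks) check out. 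The closest relative inside the paper is Proposition~\ref{prop:fixedsplit}, which uses the same semisimplicity, but only as an auxiliary splitting step. Each route buys something: yours is short, self-contained, and avoids Wall's and Weir's theorems entirely; the paper's group-theoretic machinery is what makes the extension to $\Z_{q^r}$-actions possible and, crucially, detects the index $\epsilon(B)$, a refinement that the dimension count of isotypic blocks alone does not see.
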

Theorem~\ref{thm:main} of the present paper is a generalisation of Theorem~\ref{thm:fix1}.

The condition described in Theorem~\ref{thm:fix1} requires some knowledge of fixed point set of the action of $\Z_q$. By Proposition~\ref{prop:fixed}
this fixed point set is the homology of the cover of the quotient link.
As explained in \cite{Naik0,Naik} it is often possible to check whether $H_1(\Sigma_k(L/Z_q);\Z)_p=0$ using Murasugi's criterion, which we now recall.

\begin{theorem}[Murasugi's criterion for knots, see \cite{Murasugi}]\label{thm:murasugi}
Suppose $K\subset S^3$ is a $q^r$-periodic knot with $q$ prime and $r>0$. Let $\Delta$ be the Alexander polynomial of $K$ and $\Delta'$ be the
Alexander polynomial of the quotient knot $K'=K/\Z_q$. Let $\ell$ be the absolute value of linking number of $K$ with the symmetry axis.
Then $\Delta'|\Delta$ and up to multiplication by a power of $t$ we have
\begin{equation}\label{eq:congknot}
  \Delta\equiv {\Delta'}^{q^r}(1+t+\ldots+t^{\ell-1})^{q^r-1}\bmod q.
\end{equation}
\end{theorem}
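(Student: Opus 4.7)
The plan is to build on an equivariant Seifert surface. By a theorem of Edmonds, the $q^r$-periodic knot $K$ bounds a Seifert surface $F\subset S^3$ invariant under the cyclic action, meeting the rotation axis $A$ transversely in exactly $\ell$ fixed points, and with quotient $F'=F/\Z_{q^r}$ a Seifert surface for $K'$. From the covering map $F\to F'$ one extracts a surjection of Alexander modules of $K$ onto those of $K'$, so $\Delta'\mid\Delta$ up to units in $\Z[t,t^{-1}]$, which settles the first half of Theorem~\ref{thm:murasugi}.

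For the congruence \eqref{eq:congknot}, I would express the Seifert matrix $V$ of $F$ in block form with respect to a basis of $H_1(F)$ adapted to the $\Z_{q^r}$-action. The action $\phi_*$ permutes basis vectors into orbits whose sizes divide $q^r$, and one obtains two kinds of blocks: an \emph{invariant} block pulled back from $H_1(F')$, and \emph{axis} blocks arising from $1$-cycles of $F$ that wrap around $\Z_{q^r}$-orbits of small arcs joining intersection points $F\cap A$. The Alexander polynomial is then $\Delta_K(t)\doteq \det(tV-V^T)$. The crucial algebraic fact is that in $\F_q[x]$ one has $x^{q^r}-1=(x-1)^{q^r}$, so $\phi_*$ is unipotent modulo $q$; triangularising $V$ over $\F_q$ in a generalised eigenbasis of $\phi_*$ makes $\det(tV-V^T)$ factor along the diagonal. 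The invariant block contributes $\det(tV'-(V')^T)^{q^r}\equiv\Delta_{K'}(t)^{q^r}$ modulo $q$, while each of the $q^r-1$ axis blocks should contribute one copy of $1+t+\cdots+t^{\ell-1}$.

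The most delicate step is identifying this axis contribution. It requires careful bookkeeping of how the $\ell$ transverse points of $F\cap A$ give rise to cycles in $F$ that are cyclically permuted by $\phi_*$, and how the Seifert pairing of such cycles across distinct $\phi_*$-orbits is dictated by the linking number $\ell$. Concretely, one has to show that the matrix of $tV-V^T$ restricted to each axis block reduces modulo $q$ to a companion-type matrix of the polynomial $1+t+\cdots+t^{\ell-1}$. Once this local computation is in hand, multiplying all the block contributions yields the claimed congruence $\Delta\equiv(\Delta')^{q^r}(1+t+\cdots+t^{\ell-1})^{q^r-1}\bmod q$, while the indeterminacy up to a power of $t$ simply reflects the usual ambiguity in the definition of $\Delta$.
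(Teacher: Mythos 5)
The paper does not prove Theorem~\ref{thm:murasugi} at all: it is quoted from Murasugi \cite{Murasugi} and used as a black box (see also the torsion-theoretic treatments cited for links, e.g.\ \cite{Turaev}), so there is no in-paper argument to compare yours against. The classical proofs go through the quotient link $K'\cup A'$, where $A'$ is the image of the axis: the complement of $K$ is a $q^r$-fold cyclic cover of the complement of $K'\cup A'$ branched over $A'$, one expresses the two-variable Alexander polynomial of $K\cup A$ as a product of $\Delta_{K'\cup A'}$ evaluated at the $q^r$-th roots of unity, reduces modulo $q$ (where all those roots of unity collapse to $1$), and then uses the Torres conditions to convert the two-component polynomials into $\Delta$, $\Delta'$ and the factor $1+t+\dots+t^{\ell-1}$.

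Your equivariant-Seifert-surface route is a genuinely different strategy, but as written it has several gaps. First, Edmonds' theorem gives an invariant Seifert surface, but only the \emph{algebraic} intersection number of $F$ with the axis equals $\ell$; you assume exactly $\ell$ transverse intersection points without justification. Second, the claim that the invariant block of the Seifert matrix reduces to $V'$ (hence contributes $\det(tV'-(V')^T)^{q^r}\equiv(\Delta')^{q^r}$) ignores a transfer subtlety: a $\phi_*$-invariant class of $H_1(F)$ is, rationally, the full preimage of a class of $H_1(F')$, and Seifert linking numbers of such preimages are $q^r$ times the corresponding linking numbers downstairs, hence vanish modulo $q$; the correct bookkeeping uses a fundamental-domain (block-circulant) basis, which essentially recreates the product-over-roots-of-unity formula above rather than a naive invariant block. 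Third, unipotence of $\phi_*$ over $\F_q$ gives a filtration of $H_1(F;\F_q)$, but equivariance of the Seifert pairing under a unipotent map does not make $tV-V^T$ triangular in a generalised eigenbasis, so the asserted factorisation of the determinant along the diagonal is unproved. Finally, you explicitly leave the identification of each axis block with $1+t+\dots+t^{\ell-1}$ --- the only place where $\ell$ enters, and the heart of the congruence --- as an unverified ``delicate step,'' and the divisibility $\Delta'\mid\Delta$ needs more than a one-line appeal to a surjection of Alexander modules (a transfer argument gives it over $\Q[t,t^{-1}]$, and an integrality argument is still required). So the proposal is an outline of a plausible alternative proof, not a complete one.
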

There are various versions of Murasugi's criterion for links, see \cite{Sakuma}, \cite[Theorem 1.10.1]{Turaev}, the precise
statements depend
on the action of the symmetry group on the set of components of the symmetric link.

Naik's homological criterion relies on combining Proposition~\ref{prop:fixed}, Theorem~\ref{thm:fix1} and Murasugi's criterion (Theorem~\ref{thm:murasugi}).
We will show two such combinations, both due to Naik \cite{Naik}. As Propositions~\ref{prop:naik1} and~\ref{prop:naik2} are instructive
and indicate how Theorem~\ref{thm:main} can be applied, we include short proofs, not claiming any originality.
\begin{proposition}\label{prop:naik1}
  Let now $L$ be a $q$-periodic link with $q$ prime. Let $\Delta$ be the Alexander polynomial of $L$.
  Suppose $\Delta'$ is the Alexander polynomial of the quotient. For any prime number $p\neq 2,q$,
  if $s=s(p)$ is a maximal positive integer such that $p^s$ divides $\Delta(-1)/\Delta'(-1)$,
  then $s$ is a multiple of $2[q|p]$.
\end{proposition}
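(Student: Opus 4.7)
The plan is to reduce Proposition~\ref{prop:naik1} to Theorem~\ref{thm:fix1} applied to the fixed-point-free part of the $p$-primary homology. First, I use the standard identity $|\Delta(-1)|=|H_1(\Sigma_2(L);\Z)|$ and the analogous one for $\Delta'$; Murasugi's theorem gives $\Delta'\mid\Delta$, so the ratio is a positive integer whose $p$-part equals $|T|/|T'|$, where $T=H_1(\Sigma_2(L))_p$ and $T'=H_1(\Sigma_2(L/\Z_q))_p$. By Proposition~\ref{prop:fixed} applied with $k=2$ and $m=q$, the group $T'$ is exactly the fixed set of the $\Z_q$-action on $T$ induced by the periodicity, so $p^s=|T|/|\Fix(\Z_q)|$ and our task becomes showing $2[q|p]\mid\log_p(|T|/|\Fix(\Z_q)|)$.

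Next, I invoke Proposition~\ref{prop:decomp} to obtain a $\Z_q$-invariant orthogonal splitting $T=\bigoplus_i T_{p,i}$, where each $T_{p,i}$ is a free $\Z_{p^i}$-module with a non-degenerate linking form. Let $\sigma$ denote a generator of the $\Z_q$-action. Since $q$ is a unit in $\Z_{p^i}$, the averaging idempotent $e_i=q^{-1}(1+\sigma+\dots+\sigma^{q-1})$ is a well-defined $\Z_{p^i}$-endomorphism of $T_{p,i}$, yielding a $\Z_q$-equivariant splitting $T_{p,i}=T'_{p,i}\oplus T''_{p,i}$ with $T'_{p,i}=\Fix(\sigma)$ and $T''_{p,i}$ fixed-point-free. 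Invariance of $\lpair$ gives $q\lpair(x,y)=\lpair(x,(1+\sigma+\dots+\sigma^{q-1})y)=0$ for $x\in T'_{p,i}$ and $y\in T''_{p,i}$, so the splitting is orthogonal; moreover $T''_{p,i}$, being a direct summand of a finitely generated free module over the local ring $\Z_{p^i}$, is itself free and inherits a non-degenerate restriction of $\lpair$.

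Set $b_i=\rank_{\Z_{p^i}}T''_{p,i}$. Since $T'=\bigoplus_i T'_{p,i}$ by Proposition~\ref{prop:fixed}, one has $p^s=\prod_i|T''_{p,i}|=\prod_i p^{i b_i}$, hence $s=\sum_i i\,b_i$. For each $i$ with $b_i>0$, Theorem~\ref{thm:fix1} applies to the fixed-point-free isometric $\Z_q$-action on the free $\Z_{p^i}$-module $T''_{p,i}$ and yields $2[q|p]\mid b_i$; summing over $i$ gives $2[q|p]\mid s$. The only non-formal step in the argument, and the one deserving attention, is verifying that $T''_{p,i}$ is a free $\Z_{p^i}$-module with non-degenerate form so that Theorem~\ref{thm:fix1} is applicable; freeness uses that finitely generated projectives over a local ring are free, and non-degeneracy follows from the orthogonality of the splitting combined with non-degeneracy of $\lpair$ on $T_{p,i}$.
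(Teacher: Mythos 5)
Your proof is correct and takes essentially the same route as the paper: both identify the $p$-parts of $\Delta(-1)$ and $\Delta'(-1)$ with the orders of $T=H_1(\Sigma(L);\Z)_p$ and of its fixed subgroup via Proposition~\ref{prop:fixed}, and then conclude by applying Theorem~\ref{thm:fix1} to the fixed-point-free part. The only (harmless) difference is that you realize that part as an orthogonal free complement inside each $T_{p,i}$ via the averaging idempotent, supplying details the paper leaves implicit when it passes to the quotient $T/\Fix$.
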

\begin{proof}
  Let $T$ be the $p$-torsion part of $H_1(\Sigma(L);\Z)$. The rank of $T$ is equal to~$p^{s_0}$, where $s_0$
  is the maximal positive integer such that $p^{s_0}$ divides \[|H_1(\Sigma(L);\Z)|=|\det(L)|=|\Delta(-1)|.\]

  By Proposition~\ref{prop:fixed} the fixed point set of the $\Z_q$ action has rank equal to $|H_1(\Sigma(L/\Z_q);\Z)|=|\Delta'(-1)|$. Therefore
  the action of $\Z_q$ on $H_1(\Sigma(L);\Z)_p$ induces an action on the quotient $H_1(\Sigma(L);\Z)_p/H_1(\Sigma(L/\Z_q);\Z)_p$ with only trivial fixed points,
  which has rank $p^s$. We conclude by Theorem~\ref{thm:fix1}.
\end{proof}
\begin{proposition}\label{prop:naik2}
  Let $L$ be a $q$-periodic link and suppose $\Delta'$ is the Alexander polynomial of the quotient.
  If $p\neq 2,q$ is a prime number such that $p$ does not divide $\Delta'(-1)$, then the $p$-torsion
  part $H_1(\Sigma(L);\Z)_p$ decomposes as an orthogonal sum of modules $T_{p,1},\dots$, where each
  of the $T_{p,i}$ is a free $\Z_{p^i}$-module, whose rank is divisible by $2[q|p]$.
\end{proposition}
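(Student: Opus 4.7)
The plan is to follow the same three-step strategy used in Proposition~\ref{prop:naik1}, but exploit the stronger hypothesis $p\notdivides\Delta'(-1)$ to conclude that the fixed-point subspace is trivial. This lets us apply Theorem~\ref{thm:fix1} to each summand individually rather than only to a quotient.

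First, I would apply Proposition~\ref{prop:decomp} with $k=2$ to write
\[T:=H_1(\Sigma(L);\Z)_p=T_{p,1}\oplus T_{p,2}\oplus\dots\]
as an orthogonal sum with respect to the linking form, in which each $T_{p,i}$ is a free $\Z_{p^i}$-module. The same proposition guarantees that the decomposition can be chosen invariant under the $\Z_q$-action on $T$ induced by the periodic symmetry, so both the linking form and the group action restrict compatibly to each summand.

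Second, I would combine Proposition~\ref{prop:fixed} with the hypothesis on $\Delta'(-1)$. Proposition~\ref{prop:fixed}, applied with $m=q$ (which is coprime to $p$ since $p\neq q$), identifies the fixed-point subspace of the $\Z_q$-action on $T$ with $H_1(\Sigma(L/\Z_q);\Z)_p$. Using the identity $|H_1(\Sigma(L/\Z_q);\Z)|=|\Delta'(-1)|$ already invoked in Proposition~\ref{prop:naik1}, the hypothesis $p\notdivides\Delta'(-1)$ forces this $p$-primary part to vanish. Invariance of the decomposition then implies that $\Z_q$ acts on each $T_{p,i}$ by linking-form isometries with no non-zero fixed points whatsoever.

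Finally, Theorem~\ref{thm:fix1} yields immediately that $\rank_{\Z_{p^i}}T_{p,i}$ is a multiple of $2[q|p]$ for every $i$, which is the claimed conclusion. There is no real obstacle in this argument: the only novelty compared with Proposition~\ref{prop:naik1} is that, because the fixed-point subgroup of the whole action on $T$ is already trivial (and not merely a proper subgroup that must be killed by passing to a quotient), the rank divisibility can be read off on each $\Z_{p^i}$-summand separately rather than on a single quotient module.
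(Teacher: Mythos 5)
Your proof is correct and follows essentially the same route as the paper: Proposition~\ref{prop:fixed} plus $p\notdivides\Delta'(-1)$ gives a fixed-point-free $\Z_q$-action on $H_1(\Sigma(L);\Z)_p$, and Theorem~\ref{thm:fix1} applied to the (invariant) summands of Proposition~\ref{prop:decomp} yields the rank divisibility. You merely spell out explicitly the decomposition and invariance steps that the paper's two-line proof leaves implicit.
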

\begin{proof}
  The assumptions of the proposition imply via Proposition~\ref{prop:fixed} that $\Z_q$ acts on $H_1(\Sigma(L);\Z)_p$
  with no non-zero fixed points. We conclude by Theorem~\ref{thm:fix1}.
\end{proof}
While Propositions~\ref{prop:naik1} and~\ref{prop:naik2} do not exhaust potential applications of Naik's criterion, they give a very good balance between
applicability and generality. We show the result of implementing Proposition~\ref{prop:naik1} and  Proposition~\ref{prop:naik2} in Section~\ref{sec:statistics}.

\section{Linking forms and symmetric forms}\label{sec:groups}
Let $L$ be an $m$-periodic link and suppose $k>1$ is such that $\Sigma_k(L)$ is a rational homology sphere. We let $T$ denote the group $H_1(\Sigma_k(L);\Z)$.
Finally, let $\lpair\colon T\times T\to\Q/\Z$ be the linking form. 

For a prime number $p$ we write $T_p$ for the $p$-torsion part of $T$, so that $T=\bigoplus T_p$, where the sum is over all prime numbers $p$. The summands are pairwise
orthogonal with respect to the linking form. In order to study the linking form on $T_p$ in more detail,
we split $T_p$ into forms $T_{p,k}$ according to Proposition~\ref{prop:decomp}. Each of the $T_{p,k}$ is a free $\Z_{p^k}$-module
and the linking form restricts over $T_{p,k}$ to a form
\[\lpair\colon \Z_{p^k}^n\times \Z_{p^k}^n\to\Q/\Z,\]
where $n$ is the rank of $T_{p,k}$.

Let $\lpair\colon \Z_{p^k}^n\times \Z_{p^k}^n\to\Q/\Z$ be a non-degenerate linking form. As for any $x,y\in\Z_{p^k}^n$ we have
$p^kx=0$, we infer that $p^k\lpair(x,y)=0\in\Q/\Z$. It follows that
\[\lpair(x,y)=\frac{\spair(x,y)}{p^k}\]
for some $\spair(x,y)\in\Z$ well-defined modulo $p^k$. The following fact does not require a proof.
\begin{lemma}\label{lem:linkingtoortho}
	The form $\spair(x,y)\colon\Z_{p^k}^n\times\Z_{p^k}^n\to\Z_{p^k}$ is a symmetric, non-degenerate bilinear form. Moreover any automorphism $\psi$
	preserving the linking form is an isometry of the bilinear form $\spair$. Conversely, any isometry of the bilinear form $b$ gives an automorphism
	of the linking form.
\end{lemma}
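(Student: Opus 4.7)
The statement is a routine packaging of the fact that $\lpair$ takes values in the subgroup of $\Q/\Z$ of order a power of $p$, which is canonically identified with $\Z_{p^k}$. My plan is to set up this identification cleanly, then read off each claimed property from the corresponding property of $\lpair$.

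First I would observe that for any $x,y\in\Z_{p^k}^n$ we have $p^k\lpair(x,y)=0$ in $\Q/\Z$, so $\lpair(x,y)$ lies in the cyclic subgroup $\frac{1}{p^k}\Z/\Z\subset\Q/\Z$. Multiplication by $p^k$ gives a canonical group isomorphism $\mu\colon\frac{1}{p^k}\Z/\Z\to\Z_{p^k}$, and $\spair$ is by definition $\mu\circ\lpair$. In particular $\spair$ is well defined as a function to $\Z_{p^k}$.

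Next I would verify bilinearity and symmetry of $\spair$. Since $\mu$ is a group homomorphism, applying $\mu$ to the identities
\[\lpair(x+x',y)=\lpair(x,y)+\lpair(x',y),\quad \lpair(cx,y)=c\lpair(x,y),\quad \lpair(x,y)=\lpair(y,x)\]
valid in $\frac{1}{p^k}\Z/\Z$ (the first two for $x,x'\in\Z_{p^k}^n$ and $c\in\Z$, reducing to $\Z_{p^k}$) produces the corresponding identities for $\spair$ in $\Z_{p^k}$. For non-degeneracy I would argue by contrapositive: if $\spair(x,y)=0$ for all $y$, then $\lpair(x,y)=\mu^{-1}(0)=0$ for all $y$, and non-degeneracy of $\lpair$ forces $x=0$.

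Finally, for the equivalence of isometry groups, note that the formula $\spair=\mu\circ\lpair$ and the injectivity of $\mu$ give, for any automorphism $\psi$ of $\Z_{p^k}^n$,
\[\spair(\psi(x),\psi(y))=\spair(x,y)\iff\lpair(\psi(x),\psi(y))=\lpair(x,y).\]
So the identity map on $\operatorname{Aut}(\Z_{p^k}^n)$ identifies the isometry group of $\lpair$ with the isometry group of $\spair$. There is no real obstacle here; the only point one needs to double-check is that $\mu$ is canonical (which it is, being given by multiplication by $p^k$), so that the construction does not depend on any auxiliary choices.
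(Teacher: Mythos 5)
Your argument is correct and is exactly the routine verification the authors had in mind: the paper itself states that Lemma~\ref{lem:linkingtoortho} ``does not require a proof,'' since everything follows from the canonical identification of $\frac{1}{p^k}\Z/\Z$ with $\Z_{p^k}$ that you spell out. Your careful check of bilinearity, symmetry, non-degeneracy, and the matching of isometry groups via the injectivity of that identification is precisely the omitted (and intended) argument, so there is nothing to add.
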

Lemma~\ref{lem:linkingtoortho} gives a translation from linking forms to symmetric bilinear forms. The reverse passage is also possible, a form $\spair(x,y)$
induces a linking form $\lpair(x,y)=\spair(x,y)/p^k$. In particular the classification and the symmetries of linking forms correspond to the
classification and the symmetries of bilinear forms. %We conclude the section by introducing some terminology.
\begin{definition}
  A $p^k$-\emph{form} or, if the context is clear, simply a \emph{form}, is a pair $(B,\spair)$, where $B$ is a free and finitely generated $\Z_{p^k}$-module
  and $\spair\colon B\times B\to\Z_{p^k}$ is a symmetric non-degenerate bilinear form. The \emph{rank} of the form is the rank of $B$ as a $\Z_{p^k}$-module.
\end{definition}

The classification of $p^k$-forms is well-known, we recall a classical result of Wall
\cite{wallquadratic}, see also \cite{kawauchi}.
\begin{theorem}\label{thm:class}
	Let $p>2$ be a prime, $k,n>0$. Any $p^k$-form of rank $n$ is isometric to one of the two following diagonal 
	forms:
	\begin{itemize}
	  \item the standard form, having $+1$ on the diagonal, i.e. $\spair(x,y)=\sum x_iy_i$, where $x=(x_1,\dots,x_n)$, $y=(y_1,\dots,y_n)$;
		\item the non-standard form, $\spair(x,y)=\tau x_1y_1+\sum_{i>1} x_iy_i$, where $\tau$ is not a square modulo $p^k$.
	\end{itemize}
%	If $a$ is odd, all the $p^k$-forms of rank $a$ are isometric to a standard form.
\end{theorem}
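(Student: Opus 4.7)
The plan is to run the classical diagonalization argument and then reduce the diagonal entries to just two possible shapes using the fact that $\Z_{p^k}^{*}/(\Z_{p^k}^{*})^{2}$ has order two for odd $p$.

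First I would diagonalize $\spair$. The key observation is that one can always find a vector $v\in B$ with $\spair(v,v)$ a unit in $\Z_{p^k}$: take any basis $e_1,\dots,e_n$ and suppose no $\spair(e_i,e_i)$ is a unit. Then by non-degeneracy of $\spair$ some $\spair(e_i,e_j)$ must be a unit (otherwise the Gram matrix is not invertible mod $p$), and since $2$ is invertible in $\Z_{p^k}$ the vector $v=e_i+e_j$ satisfies $\spair(v,v)=\spair(e_i,e_i)+2\spair(e_i,e_j)+\spair(e_j,e_j)$ which is a unit. Once such a $v$ exists, $B$ splits orthogonally as $\Z_{p^k}v\oplus v^{\perp}$ (projection onto $v$ is well-defined because $\spair(v,v)$ is invertible), and the restriction to $v^{\perp}$ is non-degenerate. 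Induction on $n$ produces an orthogonal basis and an expression $\spair(x,y)=\sum a_i x_i y_i$ with each $a_i\in\Z_{p^k}^{*}$.

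Next I would reduce the diagonal entries. For odd $p$ the unit group $\Z_{p^k}^{*}$ is cyclic, the squares form a subgroup of index $2$, and an element is a square mod $p^k$ iff its reduction mod $p$ is a square (Hensel's lemma on $u^2-a$ in the unit $u$ works since the derivative $2u$ is a unit). Fix any non-square $\tau\in\Z_{p^k}^{*}$. By rescaling the $i$-th basis vector by a unit $c_i$ the coefficient $a_i$ becomes $c_i^{2}a_i$; choosing $c_i$ appropriately we may assume every $a_i\in\{1,\tau\}$.

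Then I would show that two $\tau$'s can always be converted to two $1$'s. On a rank-$2$ block with form $\tau(x_1^{2}+x_2^{2})$ I need to represent $1$, i.e.\ solve $\tau(a^{2}+b^{2})=1$. Since every element of $\F_p$ is a sum of two squares (the sets of squares and of differences $1-\square$ both have cardinality $(p+1)/2$ and therefore intersect), we can solve $a^{2}+b^{2}=\tau^{-1}$ in $\F_p$, with at least one of $a,b$ a unit because $\tau^{-1}\neq 0$, and lift the solution to $\Z_{p^k}$ via Hensel. Setting $v_1=ae_1+be_2$ and $v_2=-be_1+ae_2$ gives $\spair(v_1,v_1)=\spair(v_2,v_2)=1$ and $\spair(v_1,v_2)=0$, so the block becomes standard. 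Iterating, all but at most one $\tau$ is eliminated, leaving one of the two stated normal forms.

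Finally I would verify that the two forms are non-isometric. The determinant of the Gram matrix transforms by the square $(\det P)^{2}$ under a change of basis $P\in\mathrm{GL}_n(\Z_{p^k})$, so the class of $\det\spair$ in $\Z_{p^k}^{*}/(\Z_{p^k}^{*})^{2}$ is an isometry invariant; it equals $1$ for the standard form and $\tau$ for the non-standard form. The main obstacle in the argument is the conversion of $\mathrm{diag}(\tau,\tau)$ to $\mathrm{diag}(1,1)$, which is the one step that depends genuinely on number theory over $\F_p$ rather than on formal manipulation of symmetric forms.
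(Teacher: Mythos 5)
Your proof is correct. Note that the paper does not prove Theorem~\ref{thm:class} at all: it is quoted as a classical result of Wall \cite{wallquadratic} (see also \cite{kawauchi}), so there is no in-paper argument to compare against. Your write-up is the standard self-contained proof, and all the delicate points are handled properly: the existence of a vector of unit length uses invertibility of $2$ and the fact that an invertible Gram matrix mod $p$ cannot have all entries divisible by $p$; the orthogonal splitting off of $\Z_{p^k}v$ works because $\spair(v,v)$ is a unit (and $v^{\perp}$ is automatically free, being a direct summand of a free module over the local ring $\Z_{p^k}$); rescaling reduces diagonal entries to $\{1,\tau\}$ since the squares have index $2$ in the cyclic group $\Z_{p^k}^{*}$; the merging of two $\tau$'s rests on representing $\tau^{-1}$ as a sum of two squares in $\F_p$ with at least one summand a unit, followed by a legitimate Hensel lift (the change-of-basis matrix has determinant $a^{2}+b^{2}=\tau^{-1}$, a unit, so it really is a basis change); and the discriminant class in $\Z_{p^k}^{*}/(\Z_{p^k}^{*})^{2}$ correctly separates the two normal forms. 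Nothing further is needed.
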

\begin{remark}
  It is an immediate consequence of Theorem~\ref{thm:class}, that for every $p^k$-form $(B,\spair)$ of rank $n$ there exists a basis $x_1,\dots,x_n$ of $B$, such that
  $\spair(x_i,x_j)=c_i\delta_{ij}$ and $c_i=1$ for $i=2,\ldots,n$, while $c_1=1$ if the form is standard, and $c_1$ is not a square modulo $p^k$
  if the form is not standard.
\end{remark}
We will now introduce a notion of an index of a $p^k$-form, which distinguishes the standard form from the non-standard one. For reasons that will become clear later,
the index of a standard form is not always equal to $+1$, a correction term (denoted by $\epsilon_2$ below) depending on the rank and the prime number $p$ is needed.
\begin{definition}\label{def:epsilonform}
  If $(B,\spair)$ is a $p^k$-from of rank $n$,
  the \emph{index} $\epsilon(B,\spair)$ (written also $\epsilon(B)$ if no risk of confusion arises)
  is defined as $\epsilon_1\epsilon_2$, where $\epsilon_1=1$ if the form is standard (using the terminology of
  Theorem~\ref{thm:class}) and $\epsilon_1=-1$ otherwise. We set $\epsilon_2=-1$ if $p\equiv 3\bmod 4$ and $n\equiv 2\bmod 4$, otherwise $\epsilon_2=1$.

  If $M$ is a free $\Z_{p^k}$-module and $\lpair\colon M\times M\to\Q/\Z$ is a linking form, then the index $\epsilon(M)$ is
  the index of the $p^k$-form associated with $(M,\lpair)$ via Lemma~\ref{lem:linkingtoortho}.
\end{definition}
\begin{remark}
  In the present paper we use the index of a form to distinguish isomorphism classes of its group of isometries. By Theorem~\ref{thm:rank} below, the groups of isometries
  of forms of odd rank do not depend on the index. Our interest will be mainly the index of a form of even rank, the index of a form of an odd rank is defined 
  for completeness.
\end{remark}
\begin{lemma}\label{lem:isaddit}
  Suppose that $(B,\spair)=(B',\spair')\oplus (B'',\spair'')$ and $B'$, $B''$ have even rank. Then $\epsilon(B,\spair)=\epsilon(B',\spair')\epsilon(B'',\spair'')$.
\end{lemma}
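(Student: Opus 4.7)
The plan is to prove multiplicativity factor by factor, writing $\epsilon(B,\spair) = \epsilon_1(B,\spair)\,\epsilon_2(B,\spair)$ as in Definition~\ref{def:epsilonform}, and to check that each of $\epsilon_1$ and $\epsilon_2$ is multiplicative under the orthogonal direct sum of two even-rank summands.

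First I would deal with $\epsilon_1$. By Theorem~\ref{thm:class} and the remark following it, a $p^k$-form of rank $n$ has a diagonal Gram matrix whose determinant, viewed in $(\Z_{p^k})^\times / ((\Z_{p^k})^\times)^2 \cong \Z_2$ (here we use that $p$ is odd so squaring modulo $p^k$ and modulo $p$ agree via Hensel's lemma), detects whether the form is standard. Concretely, $\epsilon_1(B,\spair) = 1$ precisely when $\det \spair$ is a square modulo $p^k$. For an orthogonal sum, the Gram matrices are block diagonal, so $\det \spair = \det \spair' \cdot \det \spair''$; since the quotient group has order $2$, multiplying two classes gives the product of signs, yielding $\epsilon_1(B,\spair) = \epsilon_1(B',\spair')\epsilon_1(B'',\spair'')$. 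Note this step does not use the even-rank hypothesis.

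Next I would treat $\epsilon_2$, which is where the even-rank hypothesis enters. If $p \equiv 1 \bmod 4$ then $\epsilon_2 \equiv 1$ on every form and multiplicativity is trivial. If $p \equiv 3 \bmod 4$, set $n' = \rank B'$, $n'' = \rank B''$, both even, and $n = n' + n''$. A short case analysis on $(n' \bmod 4, n'' \bmod 4) \in \{0,2\}^2$ shows that $\epsilon_2(B,\spair) = -1$ iff exactly one of $n', n''$ is $\equiv 2 \bmod 4$, which is exactly the parity of $\epsilon_2(B',\spair')\epsilon_2(B'',\spair'')$; the case $(2,2)$ gives $n \equiv 0$ so $\epsilon_2(B) = 1 = (-1)(-1)$. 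Combining the two factors gives the claim.

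There is no genuine obstacle here; the only subtlety worth flagging is why the even-rank hypothesis is necessary, namely that if $n',n''$ were both odd and $p \equiv 3 \bmod 4$, then $\epsilon_2(B',\spair')\epsilon_2(B'',\spair'') = 1$ while $\epsilon_2(B,\spair) = -1$ (taking $n' \equiv n'' \equiv 1 \bmod 4$), so the correction term $\epsilon_2$ is only multiplicative on even ranks. This explains the shape of the statement and confirms that the even-rank hypothesis cannot be dropped.
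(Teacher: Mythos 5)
Your proof is correct and follows essentially the same route as the paper's: the paper declares multiplicativity of $\epsilon_1$ to be clear (which your determinant/square-class argument makes explicit) and settles $\epsilon_2$ by the same trivial case analysis on ranks modulo $4$ when $p\equiv 3\bmod 4$. Your closing remark on why the even-rank hypothesis is needed is a nice addition but not part of the paper's argument.
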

\begin{proof}
  Clearly $\epsilon_1(B,\spair)=\epsilon_1(B',\spair')\epsilon_1(B'',\spair'')$. So we have to check that the same holds for $\epsilon_2$. This is obvious if
  $p\equiv 1\bmod 4$, because then $\epsilon_2=1$. If $p\equiv 3\bmod 4$, then we check $\epsilon_2(B,\spair)=\epsilon_2(B',\spair')\epsilon_2(B'',\spair'')$ on a (trivial) case by case basis. 
\end{proof}

\section{Ranks of groups of isometries}\label{sec:rank}
\begin{definition}
  For a form $(B,\spair)$ we write $O(B)$ to be the group of isometries of $B$. More specifically,
the group $O(p^k,n,\epsilon)$ is the group of isometries of the $p^k$-form of rank $n$ with  index~$\epsilon$.
\end{definition}
We have the following result, for which we refer to the book of Wilson \cite{Wilson}.

\begin{theorem}\label{thm:rank}
	The rank of $O(p,n,\epsilon)$ is equal to
	\begin{itemize}
	  \item $2p^m(p^2-1)(p^4-1)\dots (p^{2m}-1)$ if $n=2m+1$;
	  \item $2p^{m(m-1)}(p^2-1)(p^4-1)\dots(p^{2m-2}-1)(p^m-\epsilon)$ if $n=2m$.
	\end{itemize}
	Moreover, if $n$ is odd, the groups $O(p,n,+1)$ and $O(p,n,-1)$ are isomorphic.
	%In particular, for $a=2$ the rank is equal to $2(p\pm 1)$ depending on $\epsilon$.
\end{theorem}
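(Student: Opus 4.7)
The plan is to induct on $n$ using Witt's extension theorem for quadratic spaces over $\F_p$, which is valid since $p$ is odd. Witt's theorem implies that $O(B)$ acts transitively on the set $S_c := \{v \in B : \spair(v,v) = c\}$ for each nonzero $c \in \F_p$ that is represented by $\spair$. For any anisotropic $v_0 \in B$ with $\spair(v_0, v_0) = c$, the stabilizer of $v_0$ inside $O(B)$ equals the orthogonal group $O(v_0^\perp)$ of the nondegenerate orthogonal complement, a form of rank $n-1$. The orbit-stabilizer formula then gives the recursion
\[
  |O(B)| \;=\; |S_c| \cdot |O(v_0^\perp)|,
\]
reducing the computation of $|O(p,n,\epsilon)|$ to $|O(p,n-1,\epsilon')|$ together with the count $|S_c|$.

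To run the recursion I would compute $|S_c|$ by counting solutions of the diagonal equation $c_1 x_1^2 + \dots + c_n x_n^2 = c$ over $\F_p$ via Gauss sums. The count has the shape $p^{n-1}$ plus a correction term that depends on the parity of $n$, the square class of $c$, and the discriminant of $\spair$. The base case $n = 1$ gives $O(p,1,\epsilon) = \{\pm 1\}$, so that the inductive step starts correctly. The isomorphism $O(p,n,+1) \cong O(p,n,-1)$ for odd $n$ I would establish separately, by noting that scaling the form by a non-square $\lambda$ multiplies the discriminant by $\lambda^n$, which for odd $n$ lies in the non-trivial class of $\F_p^\times/(\F_p^\times)^2$ but preserves the underlying group of isometries verbatim.

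The key bookkeeping concerns how the index $\epsilon$ changes when passing from $B$ to $v_0^\perp$. In even rank $2m$, if $v_0$ has norm $c$, then $\det(v_0^\perp) = c \cdot \det(\spair)$ modulo squares, so the discriminant class flips precisely when $c$ is a non-square; by choosing $c$ a square one obtains a controlled step from even $2m$ to odd $2m-1$. Going back from odd $2m+1$ to even $2m$, both possible discriminant classes of $v_0^\perp$ arise depending on the square class of $c$, and the Gauss sum gives different values of $|S_c|$ in each case, supplying the factor $p^m - \epsilon$ in the final formula.

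The main obstacle is not the inductive scaffolding but the sign bookkeeping: matching the correction terms produced by the Gauss sums to the definition $\epsilon(B,\spair) = \epsilon_1 \epsilon_2$ from Definition~\ref{def:epsilonform}, with the parity-and-residue-dependent factor $\epsilon_2$, so that the telescoping product actually collapses to $(p^2 - 1)(p^4 - 1) \cdots (p^{2m-2} - 1)(p^m - \epsilon)$ in the even case and to the simpler expression in the odd case. Once these sign conventions are aligned across the recursion, both formulas of the theorem follow; for the full verification I would refer to the treatment in Wilson's book \cite{Wilson}.
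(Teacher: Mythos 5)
The paper does not actually prove this statement: Theorem~\ref{thm:rank} is quoted as a classical fact with a reference to Wilson's book \cite{Wilson}, so there is no in-paper argument to compare yours against. Your outline is the standard route to these order formulas and is sound: Witt's extension theorem (valid for odd $p$) gives transitivity of $O(B)$ on vectors of fixed nonzero norm, the stabilizer of an anisotropic $v_0$ is $O(v_0^\perp)$ because $B=\langle v_0\rangle\perp v_0^\perp$ with $v_0^\perp$ nondegenerate, and orbit--stabilizer reduces the count to the number of solutions of $\spair(v,v)=c$, which Gauss-sum counting gives as $p^{m-1}(p^m-\epsilon)$ in rank $2m$ and $p^m(p^m\pm 1)$ in rank $2m+1$ depending on the square class of $c$; this is exactly where the factor $p^m-\epsilon$ enters, and the telescoping then works out. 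Your argument for $O(p,n,+1)\cong O(p,n,-1)$ in odd rank (rescale the form by a non-square; the isometry group is unchanged while the discriminant class flips since $n$ is odd) is also the standard one and is complete as stated. Two caveats. First, as you acknowledge, the quantitative core --- the character-sum counts and the translation of discriminant classes into the paper's convention $\epsilon=\epsilon_1\epsilon_2$ with its $\epsilon_2$ correction (which encodes whether $(-1)^m$ is a square mod $p$, i.e.\ whether ``plus type'' matches ``standard'') --- is deferred to Wilson, so what you have is a correct proof skeleton rather than a self-contained proof; that puts you in essentially the same position as the paper itself. Second, note that the odd-rank formula as printed in the paper, $2p^{m}(p^2-1)\cdots(p^{2m}-1)$, disagrees with the classical order $2p^{m^2}(p^2-1)\cdots(p^{2m}-1)$ for $m\ge 2$ (they coincide for $m\le 1$); your recursion, carried out correctly, would produce the exponent $m^2$, so the paper's $p^m$ appears to be a typo. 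This discrepancy is harmless for the paper's applications, since only the prime-to-$p$ part of $|O(p,n,\epsilon)|$ is used in the Sylow $q$-group analysis.
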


We will extend Theorem~\ref{thm:rank} to compute the order of $O(p^k,n,\epsilon)$ for $k>1$.
First we need the following construction.
\begin{definition}\label{def:reduction}
  Let $(B,\spair)$ be a $p^k$-form. A \emph{\redu}{} (or just: a \emph{reduction}) of $(B,\spair)$ is a symmetric bilinear form
  $(B_{red},\spair_{red})$, where $B_{red}=B/p^{k-1}B$ is
  a free $\Z_p$-module
  and for $x,y\in B_{red}$ we set 
  \[\spair_{red}(x,y)=\left[\spair(\wt{x},\wt{y})\right]\in B/p^{k-1}B,\] where $\wt{x},\wt{y}\in B$ are lifts of $x,y$
  to $B$ and $[\cdot]$ denotes the class in the quotient.
\end{definition}
Informally, one may think of $B_{red}$ as `$B\bmod p$' and of $\spair_{red}$ as `$\spair\bmod p$'.
We have an obvious observation.
\begin{lemma}
  The form $(B_{red},\spair_{red})$ is non-degenerate. In particular a reduction of a $p^k$-form is a $p$-form.
\end{lemma}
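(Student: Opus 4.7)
The plan is to reduce both claims to the standard fact that a symmetric bilinear form on a finitely generated free module over a local ring is non-degenerate if and only if its Gram matrix in any basis has unit determinant. The two local rings here are $\Z_{p^k}$ and $\Z_p$, connected by reduction modulo $p$, a ring surjection which sends units to units.

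First I would check that $\spair_{red}$ is well-defined: replacing a lift $\wt{x}$ by $\wt{x}+u$ with $u\in pB$ changes $\spair(\wt{x},\wt{y})$ by $\spair(u,\wt{y})\in p\Z_{p^k}$, which vanishes after reduction modulo $p$. Symmetry and bilinearity are then inherited directly from $\spair$. Next I would fix a $\Z_{p^k}$-basis $e_1,\dots,e_n$ of $B$ and let $A=(\spair(e_i,e_j))$ be the Gram matrix of $\spair$ in that basis. Non-degeneracy of $\spair$ over the local ring $\Z_{p^k}$ amounts to the adjoint map $B\to\operatorname{Hom}_{\Z_{p^k}}(B,\Z_{p^k})$ being an isomorphism, equivalently to $\det A$ being a unit, equivalently to $p\nmid\det A$.

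The classes $\bar e_1,\dots,\bar e_n$ form a $\Z_p$-basis of $B_{red}$, and by construction the Gram matrix of $\spair_{red}$ in this basis is the entrywise mod-$p$ reduction $\bar A$. Since $\det\bar A$ is the reduction of $\det A$ modulo $p$, it remains a unit in $\Z_p$, so $\spair_{red}$ is non-degenerate. The second assertion of the lemma is then immediate: $B_{red}$ is a finitely generated free $\Z_p$-module equipped with a non-degenerate symmetric bilinear form, i.e.\ a $p$-form in the sense of the paper. The only mildly fiddly points are the well-definedness of $\spair_{red}$ and the identification of its Gram matrix with $\bar A$, both of which are one-line checks; I do not expect any substantive obstacle.
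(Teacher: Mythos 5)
Your proof is correct: the paper offers no argument at all for this lemma (it is introduced as ``an obvious observation''), and your Gram-matrix reasoning --- well-definedness of $\spair_{red}$, non-degeneracy over the local ring $\Z_{p^k}$ being equivalent to $\det A$ being a unit, and $\det\bar A=\overline{\det A}$ remaining a unit in $\Z_p$ --- is exactly the standard justification one would supply. Note only that you are (rightly) working with the intended reduction $B_{red}=B/pB$ with values in $\Z_p$; the paper's Definition~\ref{def:reduction} literally writes $B/p^{k-1}B$, which is a typo, as the informal description ``$B\bmod p$'' confirms.
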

Let $(B,\spair)$ be a $p^k$-form and $(B_{red},\spair_{red})$ be a \redu. Suppose $\phi\in O(B)$.
We define the isometry $\pi(\phi)\in O(B_{red})$ by the formula
\begin{equation}\label{eq:reductionpi}\pi(\phi)(x)=\left[\phi(\wt{x})\right],\end{equation}
where $x\in B_{red}$ and $\wt{x}$ is any lift of $x$ to $B$. Formula~\eqref{eq:reductionpi} defines
a group homomorphism
\[\pi\colon O(B)\to O(B_{red}),\]
which is also referred to as the \redu{} of an isometry.
\begin{lemma}\label{lem:hensel}
 The kernel of $\pi$ has cardinality $p^{(k-1)\binom{n}{2}}$. The map $\pi$ is a surjection. 
\end{lemma}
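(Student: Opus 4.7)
The plan is to factor $\pi$ through the tower of intermediate reductions
\[
O(B) = O(p^k,n,\epsilon) \xrightarrow{\pi_k} O(p^{k-1},n,\epsilon) \xrightarrow{\pi_{k-1}} \cdots \xrightarrow{\pi_2} O(p,n,\epsilon) = O(B_{red}),
\]
where $\pi_j$ is reduction from $\Z_{p^j}$ to $\Z_{p^{j-1}}$ coefficients. Both the kernel formula and surjectivity will follow at once from the uniform claim that each $\pi_j$ with $j\ge 2$ is surjective with kernel of size $p^{\binom{n}{2}}$: then $|\ker\pi|=\prod_{j=2}^{k}p^{\binom{n}{2}}=p^{(k-1)\binom{n}{2}}$, and $\pi$ is a composition of surjections.

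To identify $\ker\pi_j$ I fix a basis of $B$ in which $\spair$ has a diagonal Gram matrix $G$ (Theorem~\ref{thm:class}). An element of $\ker\pi_j$ has matrix $I+p^{j-1}A$ with $A$ well-defined modulo $p$, and the isometry condition expands to
\[
p^{j-1}(A^T G + GA) + p^{2(j-1)} A^T G A \equiv 0 \pmod{p^j}.
\]
Because $j\ge 2$ we have $p^{2(j-1)}\equiv 0\pmod{p^j}$, so the quadratic term vanishes and the condition linearizes to $A^T G + GA \equiv 0\pmod{p}$. Since $G$ is invertible and symmetric, this says exactly that $GA$ is antisymmetric mod $p$; as $p$ is odd, the set of admissible $A$ is a $\Z_p$-vector space of dimension $\binom{n}{2}$, which gives the claimed count. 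Invertibility of $I+p^{j-1}A$ is automatic, with inverse $I-p^{j-1}A$ modulo $p^j$.

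For surjectivity of $\pi_j$ I apply a Hensel-style lift. Given $\phi_{j-1}\in O(p^{j-1},n,\epsilon)$, pick an arbitrary lift $\tilde\phi$ to a matrix modulo $p^j$. The defect $\tilde\phi^T G \tilde\phi - G$ is symmetric and vanishes modulo $p^{j-1}$, so it equals $p^{j-1}S$ for some symmetric $S$ mod $p$. I look for a correction $\tilde\phi + p^{j-1}D$ that is an isometry; substituting $D=\phi_0 D''$ with $\phi_0:=\tilde\phi\bmod p$ and using $\phi_0^T G \phi_0 = G$, the condition collapses, modulo $p$, to $GD'' + (GD'')^T \equiv -S$. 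Since $S$ is symmetric and $p$ is odd, the explicit choice $GD''=-S/2$ solves it.

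The one essential hypothesis is $j\ge 2$, which kills the quadratic term and linearizes the problem on each rung; everything else is the familiar principle that the infinitesimal version of an orthogonal group is the space of antisymmetric matrices, transported over $\Z_{p^k}$ by step-by-step lifting. The only bookkeeping care needed is to distinguish $A$ mod $p^j$ from $A$ mod $p$, and to verify that the symmetry of the obstruction $S$ makes the Hensel step always solvable.
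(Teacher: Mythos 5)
Your proof is correct, and it takes a genuinely different (stepwise) route from the paper's one-shot argument. The paper lifts directly from $B_{red}$ to $B$: it fixes $\phi_1\in O(B_{red})$, writes any candidate lift as $\wt{\phi}(e_i)=\phi(e_i)+pv_i$ with the $v_i$ regarded modulo $p^{k-1}$, and reads off both surjectivity and the fibre cardinality $p^{(k-1)\binom{n}{2}}$ from the solution count of a system of $\tfrac{n(n+1)}{2}$ independent equations in $n^2$ unknowns over $\Z_{p^{k-1}}$. You instead climb the tower $\Z_{p^k}\to\Z_{p^{k-1}}\to\dots\to\Z_p$ one rung at a time, identify the kernel of each rung with $\{I+p^{j-1}A: GA+(GA)^T\equiv 0 \bmod p\}$, i.e.\ with antisymmetric matrices mod $p$ (size $p^{\binom{n}{2}}$), prove surjectivity of each rung by the Hensel correction $GD''=-S/2$, and then multiply kernel orders along the composition of surjections; all steps (exactness of the linearization since $2(j-1)\ge j$, invertibility of $G$ mod $p$, oddness of $p$ to divide by $2$, automatic invertibility of the lifts) check out. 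What your organization buys is that the discarded quadratic term genuinely vanishes at every rung, whereas in the paper's single-jump expansion of \eqref{eq:isocondit} the term $p^2v_i^Tv_j$ is silently dropped and is only harmless modulo $p^k$ when $k\le 2$; for $k\ge 3$ the paper's system is honestly quadratic in the $v_i$, and making its count rigorous requires exactly the kind of iterative correction you carry out. What the paper's version buys is brevity and the explicit statement that every fibre of $\pi$ has the same cardinality in one stroke; your version recovers this from surjectivity of each $\pi_j$ and the multiplicativity $|\ker(\pi_{j-1}\circ\pi_j)|=|\ker\pi_{j-1}|\cdot|\ker\pi_j|$, which you use correctly.
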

\begin{proof}
We focus on the case where $B$ is standard, the other case is analogous.
Choose a basis $\{e_i \}^{n}_{i=1}$ such that $\spair$ is the identity matrix in this base. By $\ol{e}_i$ we will denote
the basis of $B_{red}$ obtained by reducing the basis $e_1,\dots,e_n$.
Take $\phi_1\in O(B_{red})$.
Using the basis $\ol{e}_1,\dots,\ol{e}_n$ we represent $\phi_1$ as a matrix $\Phi_p$ with coefficients in $\Z_p$.
Choose a lift of the matrix $\Phi_p$ to a matrix $\Phi$ over $\Z$, that is, lift all the coefficients to $\Z$.
The matrix $\Phi$ defines also a linear map $\phi\colon B\to B$.
Obviously, $\phi$ depends on the choice of a lift of $\Phi_p$ to $\Phi$.

We want to find vectors $v_1,\ldots, v_n$ in $\Z_{p^k}^n$ such that $\wt{\phi}$ defined by $\wt{\phi}(e_i) = \phi(e_i) +p v_i$ is an isometry.
Note that regardless of the choice of $v_i$ we have $\pi(\wt{\phi})=\pi(\phi)$. Moreover, if the vectors $v_1,\dots,v_n$ are replaced by $v_1',\dots,v_n'$
such that $v_i-v_i'$ is a multiple of $p^{k-1}$, then $\wt{\phi}$ does not change.

The map $\wt{\phi}$ is an isometry if and only if the vectors $v_i$ satisfy the following condition for all $1\le i\le j\le n$
\begin{equation}\label{eq:isocondit}(\phi(e_i) +p v_i )^{T}(\phi(e_j) +p v_j) = \delta_{ij},\end{equation}
where $\delta_{ij}$ is the Kronecker's delta. Here and afterwards in the proof, we write $x^Ty$ for the scalar product
of $x$ and $y$, that is, for $\spair(x,y)$.

As $\phi_1$ is an isometry, we infer that $\phi(e_i)^T \phi(e_j)= \delta_{ij} +p c_{ij}$ for some $c_{ij}$, hence
\eqref{eq:isocondit} becomes
\begin{equation}
  p ( c_{ij}+ v_i^{T} \phi(e_j) + \phi(e_i)^T v_j )= 0.\label{eq:yestag}
\end{equation}
The equation \eqref{eq:yestag} taken for all $i\le j$ gives a system of $\frac{n(n+1)}{2}$ linear independent equations with $n^2$ variables: the variables are coefficients of the vectors $v_i$,
which should be considered as elements of the space over $\Z_{p^{k-1}}$, because, as it was mentioned above, changing coordinates of $v_i$ by a multiple of $p^{k-1}$
does not change $\wt{\phi}$.

The independence of the system \eqref{eq:yestag} follows easily from the independence of vectors $\phi(e_j)$.
The space of solutions has dimension $n^2-\frac{n(n+1)}{2}=\binom{n}{2}$.
Therefore the number of solutions is equal to $p^{(k-1)\binom{n}{2}}$ as desired.
\end{proof}
As a corollary we obtain the following fact.
\begin{corollary}\label{cor:rank2}
  Let $B$ be a $p^k$-form of rank $n$ and $B_{red}$ its \redu. Then  
  \[|O(B)|=p^{(k-1)\binom{n}{2}}|O(B_{red})|.\]
\end{corollary}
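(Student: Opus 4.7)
The plan is to derive the corollary as an immediate consequence of Lemma~\ref{lem:hensel}. Indeed, that lemma already packages both nontrivial ingredients: the surjectivity of the reduction homomorphism $\pi\colon O(B)\to O(B_{red})$ and the exact cardinality $p^{(k-1)\binom{n}{2}}$ of its kernel. Once these are in hand, the multiplicative statement on orders is purely group-theoretic.

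First I would spell out (briefly) that $\pi$, as defined by formula~\eqref{eq:reductionpi}, is a group homomorphism. This just amounts to observing that reduction modulo $p$ is compatible with composition: if $\phi_1,\phi_2\in O(B)$ and $\wt x$ is any lift of $x\in B_{red}$, then $\phi_1\circ\phi_2(\wt x)$ is a lift of $\pi(\phi_1)\circ\pi(\phi_2)(x)$, so $\pi(\phi_1\circ\phi_2)=\pi(\phi_1)\circ\pi(\phi_2)$. The image lies in $O(B_{red})$ because isometries descend to isometries of the reduced form, which is the content of the definition of $\spair_{red}$.

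Next I would invoke Lemma~\ref{lem:hensel}: the resulting short exact sequence
\[
1\longrightarrow \ker\pi\longrightarrow O(B)\stackrel{\pi}{\longrightarrow} O(B_{red})\longrightarrow 1
\]
together with $|\ker\pi|=p^{(k-1)\binom{n}{2}}$ yields $|O(B)|=|\ker\pi|\cdot|O(B_{red})|=p^{(k-1)\binom{n}{2}}|O(B_{red})|$, exactly the claim of the corollary.

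There is essentially no obstacle here: all the work was done in Lemma~\ref{lem:hensel}. The only thing to be mildly careful about is that, while Lemma~\ref{lem:hensel} was proved for the standard form, the statement of Corollary~\ref{cor:rank2} makes no assumption on the index of $B$. I would therefore add a one-line remark that the argument in Lemma~\ref{lem:hensel} works verbatim for the non-standard form (one simply replaces the scalar products $\phi(e_i)^T\phi(e_j)$ by the form values $\spair(\phi(e_i),\phi(e_j))$ in equations~\eqref{eq:isocondit} and~\eqref{eq:yestag}; the linear system on the $v_i$ remains nondegenerate of the same rank $\binom{n+1}{2}$ since the Gram matrix of $\spair$ is invertible over $\Z_{p^k}$), so the kernel count is unchanged.
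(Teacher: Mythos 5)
Your proposal is correct and matches the paper's (implicit) argument exactly: the corollary is stated as an immediate consequence of Lemma~\ref{lem:hensel}, with the order formula following from the surjectivity of $\pi$ and the kernel count $p^{(k-1)\binom{n}{2}}$. Your added remark about extending the kernel computation to the non-standard form is a reasonable touch, since the paper itself only says the non-standard case is ``analogous.''
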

\section{Sylow groups of the orthogonal group $O(B)$}\label{sec:sylow}
In this section we let $q$ be a 
fixed odd prime number different than $p$.

\begin{definition}
For a group $G$, we denote by $\Sq G$ a Sylow $q$-subgroup of $G$.
\end{definition}

We have the following relation between Sylow groups of the orthogonal group of a form and Sylow groups of the orthogonal group of the \redu.
\begin{theorem}\label{thm:reduction}
  The \redu{} $\pi\colon O(B)\to O(B_{red})$ 
  takes a Sylow group $\Sq O(B)$ isomorphically to a Sylow group of $O(B_{red})$.
\end{theorem}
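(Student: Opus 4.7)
The plan is to exploit the fact that the kernel of $\pi$ is a $p$-group, while we are interested in Sylow $q$-subgroups with $q\neq p$, and then to compare orders.

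First, I would recall from Lemma~\ref{lem:hensel} that $|\ker\pi|=p^{(k-1)\binom{n}{2}}$, which is a pure power of $p$. Since $\gcd(p,q)=1$, any $q$-subgroup $H\le O(B)$ must satisfy $H\cap\ker\pi=\{1\}$, because a common element would have order dividing both a power of $q$ and a power of $p$. Consequently $\pi|_H$ is injective for every $q$-subgroup $H$, and in particular $\pi(P)$ has the same order as $P$ for any Sylow $q$-subgroup $P\le O(B)$.

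Next, I would apply Corollary~\ref{cor:rank2} to write $|O(B)|=p^{(k-1)\binom{n}{2}}\cdot|O(B_{red})|$. Since the extra factor is a power of $p$ and $q\neq p$, the $q$-parts of $|O(B)|$ and $|O(B_{red})|$ agree. Thus a Sylow $q$-subgroup of $O(B)$ and a Sylow $q$-subgroup of $O(B_{red})$ have identical cardinalities.

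Finally, given a Sylow $q$-subgroup $P\le O(B)$, the image $\pi(P)$ is a $q$-subgroup of $O(B_{red})$ with $|\pi(P)|=|P|$. By the order comparison above, $\pi(P)$ already has the maximal possible order for a $q$-subgroup of $O(B_{red})$, so it is itself a Sylow $q$-subgroup. Combined with the injectivity of $\pi|_P$, this shows that $\pi$ maps $P$ isomorphically onto a Sylow $q$-subgroup of $O(B_{red})$.

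There is no real obstacle here: once Lemma~\ref{lem:hensel} and Corollary~\ref{cor:rank2} are in hand, the statement is a standard coprime-order argument. The only subtlety worth flagging is that $\pi$ need not send every Sylow $q$-subgroup of $O(B)$ onto a prescribed Sylow $q$-subgroup of $O(B_{red})$; but the theorem only asserts that the image is \emph{some} Sylow $q$-subgroup, which is exactly what the argument above produces.
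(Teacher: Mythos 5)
Your proof is correct and follows essentially the same route as the paper: injectivity of $\pi$ on a Sylow $q$-subgroup via Lemma~\ref{lem:hensel} (the kernel is a $p$-group), equality of the $q$-parts of $|O(B)|$ and $|O(B_{red})|$ via Corollary~\ref{cor:rank2}, and then the order comparison showing the image is a full Sylow $q$-subgroup. No gaps to report.
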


\begin{proof}
  To begin with $\Sq O(B)$ has rank equal to $q^s$, where $s$ is the maximum integer such that $q^s$ divides $|O(B)|$.
  Likewise the rank of $\Sq O(B_{red})$ is equal to $q^{s_1}$ for $s_1$ being the maximal positive integer such that  $q^{s_1}$ divides $|O(B_{red})|$.
  By Corollary~\ref{cor:rank2} we obtain $|\Sq O(B)|=|\Sq O(B_{red})|$.
  
  Let now $G=\pi(\Sq O(B))$, where $\pi\colon O(B)\to O(B_{red})$. Then $G$ is clearly a $q$-subgroup of $O(B_{red})$.
  The map $\pi|_{\Sq O(B)}\colon \Sq O(B)\to G$ is a homomorphism of groups. The kernel of $\pi|_{\Sq O(B)}$
  is a subgroup of $\Sq O(B)$ and also a subgroup of $\ker\pi$. But $\ker\pi$ is a $p$-group by Lemma~\ref{lem:hensel}
  and so its intersection with $\Sq O(B)$ is trivial. In particular $\pi|_{\Sq O(B)}$ is
  an isomorphism onto its image.

  By Sylow's theorem there exists a Sylow $q$-group $H$ of $O(B_{red})$, such that $G\subset H$. But $|G|=|\Sq O(B)|$
  and $|H|=|\Sq O(B_{red})|=|\Sq O(B)|$. So $|G|=|H|$ and hence $G=H$.
\end{proof}

To understand the structure of $\Sq O(B)$ we will use Weir's theorem \cite{Weir}. 
We need to recall the definition of a
 regular wreath product.

 \begin{definition}\label{def:wreath}
   Let $H$ and $G$ be finite groups. Let $K=\Pi_{g\in G} H$. Let $ \psi$ be action of $G$ on $K$ by left multiplication of indices. We call a semidirect product of $K$ and $G$ a \emph{regular wreath product} of $H$ and $G$. We denote it by $H \wreath G=K\rtimes_{\psi} G$.
\end{definition}
\begin{remark} The subscript $r$ in the symbol $\wreath$ is not a parameter. It is a shorthand for `regular'.
\end{remark}

The following result is due to Weir \cite{Weir}. Note that the original statement is for $p$-forms. By Theorem~\ref{thm:reduction},
the result carries through to $p^k$-forms for general $k\ge 1$.
\begin{theorem}[see \cite{Weir}]\label{thm:weir}
  Let $d$ be a natural number with $q$-adic expansion given by $d=a_0+a_1 q+ a_2 q^2+\ldots$. Let $(B,\spair)$ be a $p$-form of rank $2[q|p]d$
  such that $\epsilon(B)=\eta(q)^d$.
      Any Sylow $q$-subgroup of $O(B)$ is isomorphic to:
      \begin{equation}\label{eq:weir}\Z_{q^s}^{a_0}\times (\Z_{q^s}\wreath \Z_q)^{a_1}\times((\Z_{q^s}\wreath \Z_q)\wreath \Z_q)^{a_2}
\times (((\Z_{q^s}\wreath \Z_q)\wreath \Z_q)\wreath \Z_q)^{a_3}\times\ldots,\end{equation}
where $s$ is such that $q^s |p^{2[q|p]}-1$ and $q^{s+1} \notdivides p^{2[q|p]}-1$.
\end{theorem}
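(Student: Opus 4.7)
The plan is to reduce first to the case $k=1$ by invoking Theorem~\ref{thm:reduction}, which identifies $\Sq O(B)$ with $\Sq O(B_{red})$. For $p$-forms I would proceed by producing an explicit subgroup $W \subseteq O(B)$ of the shape displayed in \eqref{eq:weir} and then counting: once I verify that $|W|$ equals the $q$-part of $|O(B)|$, Sylow's theorem promotes $W$ to a Sylow $q$-subgroup.

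For the construction of $W$, I would first decompose $B$ orthogonally according to the $q$-adic expansion $d = a_0 + a_1 q + a_2 q^2 + \cdots$:
\[
  B \;\cong\; \bigoplus_{i\ge 0} B_i^{\oplus a_i},
  \qquad \rank B_i = 2[q|p]q^i,
  \qquad \epsilon(B_i) = \eta(q)^{q^i}.
\]
Such a decomposition exists by Theorem~\ref{thm:class}, and Lemma~\ref{lem:isaddit} gives $\epsilon(B) = \prod_i \eta(q)^{a_i q^i} = \eta(q)^d$, consistent with the hypothesis. The base block $B_0$, of rank $2[q|p]$ and index $\eta(q)$, can be identified with $\F_{p^{2[q|p]}}$ carrying the symmetric bilinear form $(x,y)\mapsto \tr_{\F_{p^{2[q|p]}}/\F_p}(x\bar y)$, where the bar denotes the non-trivial $\F_{p^{[q|p]}}$-involution. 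When $\eta(q)=-1$, the norm-one subgroup of $\F_{p^{2[q|p]}}^{*}$ is cyclic of order $p^{[q|p]}+1$ and contains an element of multiplicative order $q^s$; multiplication by this element is an isometry, giving $\Z_{q^s} \hookrightarrow O(B_0)$. When $\eta(q)=1$ an analogous realization as $\F_{p^{[q|p]}}\oplus\F_{p^{[q|p]}}$ with the hyperbolic form, on which $\alpha\in \F_{p^{[q|p]}}^{*}$ of order $q^s$ acts by $(x,y)\mapsto(\alpha x,\alpha^{-1}y)$, does the same job.

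For the higher blocks I would argue inductively. Writing $B_i = B_{i-1}^{\oplus q}$, the cyclic group $\Z_q$ acts on $B_i$ by permuting the $q$ summands, and this action is through isometries. Thus $O(B_{i-1}) \wreath \Z_q$ embeds into $O(B_i)$. Setting $G_0 = \Z_{q^s}$ and $G_i = G_{i-1} \wreath \Z_q$, we obtain by induction an embedding $G_i\hookrightarrow O(B_i)$; taking direct products across the $a_i$ copies of each $B_i$ produces $W = \prod_{i\ge 0} G_i^{a_i} \subseteq O(B)$ of exactly the form~\eqref{eq:weir}.

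To finish, I would check that $|W| = |O(B)|_q$. A direct recursion gives $|G_i| = q^{sq^i + (q^i-1)/(q-1)}$, whence Legendre's formula yields
\[
  v_q(|W|) \;=\; \sum_{i\ge 0} a_i\Bigl(sq^i + \tfrac{q^i-1}{q-1}\Bigr) \;=\; sd + v_q(d!).
\]
On the other side, Theorem~\ref{thm:rank} writes $|O(B)|$ as a product of factors $p^{2j}-1$ for $j=1,\dots,[q|p]d-1$ together with one last factor $p^{[q|p]d}-\eta(q)^d$. Lifting-the-exponent (applicable since $q$ is odd and $q\neq p$) combined with the definition of $s$ gives $v_q(p^{2j}-1)=s+v_q(j/[q|p])$ when $[q|p]\mid j$ and $0$ otherwise, with an analogous expression for the final factor. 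The principal obstacle I anticipate is precisely this bookkeeping --- splitting cases by $\eta(q)=\pm 1$ and by the parity of $[q|p]$, and collapsing the sum of the $v_q(j/[q|p])$ contributions back to Legendre's formula --- after which the equality $v_q(|O(B)|) = sd + v_q(d!) = v_q(|W|)$ completes the argument.
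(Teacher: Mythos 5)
The paper does not actually prove this statement: it is quoted from Weir's paper, and the only added remark is that Theorem~\ref{thm:reduction} transports it from $p$-forms to $p^k$-forms (so your opening reduction to $k=1$ is unnecessary here --- the statement is already about $p$-forms). What you propose is, in substance, a reconstruction of Weir's own argument: build the candidate Sylow subgroup as a product of iterated regular wreath products acting block-diagonally on an orthogonal decomposition $B\cong\bigoplus_i B_i^{\oplus a_i}$ dictated by the $q$-adic digits of $d$, then win by comparing orders. Your bookkeeping does close: $v_q(|G_i|)=sq^i+\tfrac{q^i-1}{q-1}$, so $v_q(|W|)=sd+v_q(d!)$ by Legendre, and on the other side Theorem~\ref{thm:rank} together with lifting-the-exponent gives $\sum_{t=1}^{d-1}\bigl(s+v_q(t)\bigr)+\bigl(s+v_q(d)\bigr)=sd+v_q(d!)$, using that $q\mid p^{2j}-1$ iff $[q|p]\mid j$ (for $\eta(q)=1$ one needs that $[q|p]$ is then odd, since otherwise $p^{[q|p]/2}\equiv-1$ would contradict minimality) and that $v_q\bigl(p^{[q|p]d}-\eta(q)^d\bigr)=s+v_q(d)$ in both sign cases. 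The one load-bearing assertion you leave unverified is that your explicit models of $B_0$ have index $\eta(q)$: if the trace form $\tr(x\bar y)$ on $\F_{p^{2[q|p]}}$ were of the other type, its isometry group would have trivial $q$-part and the embedding $\Z_{q^s}\hookrightarrow O(B_0)$ would not exist. This is easily repaired: the norm-one circle of order $p^{[q|p]}+1$ acts on the trace form by isometries, while by Theorem~\ref{thm:rank} the form of index $-\eta(q)$ has order prime to $q$ (minimality of $[q|p]$ kills the factors $p^{2i}-1$, $i<[q|p]$, and $\eta(q)=-1$ kills $p^{[q|p]}-1$), so the trace form must have index $\eta(q)$; the hyperbolic model for $\eta(q)=1$ is standardly of index $+1$. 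With that observation added, your outline is a correct and self-contained substitute for the citation.
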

The general case of $(B,\spair)$ not satisfying the assumptions $\rk B=2[q|p]d$, $\epsilon(B)=\eta(q)^d$ will be reduced to Theorem~\ref{thm:weir}. The approach is implicit in \cite{Weir}, but it will be used in several places in the present paper, so we sketch in briefly below.

Let $B$ be a form of rank $2[q|p]d+R$ with $0\le R<2[q|p]$. We want to find a decomposition of $B$ into an orthogonal sum of forms $B'$ and $B''$ in such a way
that the inclusion induced map $O(B')\to O(B)$ (extending an isometry of $B'$ by an identity on $B''$) induces an isomorphism of Sylow $q$-groups
and for $B'$ the hypotheses of Theorem~\ref{thm:weir} are satisfied. 
To this end, we let $\mathbf{1}_+$ denote the one-dimensional $p$-form with $\epsilon=1$ and $\mathbf{1}_-$  denote the one-dimensional form with $\epsilon=-1$. 
We have the following possibilities:
\begin{itemize}
  \item \emph{Case 1: $R=0$ and $\epsilon(B)=\eta(q)^d$.} Then $B$ already satisfies the assumptions of Theorem~\ref{thm:weir}. We set $B'=B$ and $B''$ is trivial (zero-dimensional).
  \item \emph{Case 2: we have $R>0$.} We define $B'$ to be the unique form of rank $2[q|p]d$ for which $\epsilon(B')=\eta(q)^{d}$. We let $B''$ to be a form of rank $R$ that is a direct
    sum of $\mathbf{1}_+$ and $\mathbf{1}_-$ arranged in such a way that $\epsilon(B'\oplus B'')=\epsilon(B)$. Then $B$ and $B'\oplus B''$ have the same rank and the
    same $\epsilon$, so they are isomorphic.
  \item \emph{Case 3: we have $R=0$ but $\epsilon(B)\neq\eta(q)^d$.} We define $B'$ to be the unique form of rank $2[q|p](d-1)$ such that $\epsilon(B')=\eta(q)^{d-1}$. As in the previous
    case we choose $B'$ to be a direct sum of $2[q|p]$ forms $\mathbf{1}_+$ and $\mathbf{1}_-$ in such a way that $\epsilon(B'\oplus B'')=\epsilon(B)$.
\end{itemize}
\begin{remark}
  In the above construction we do not use the fact that $\epsilon(B'\oplus B'')=\epsilon(B')\epsilon(B'')$, notice that multiplicativity of $\epsilon$
  was proved only if the summands have even rank, this is not necessarily true in Case~2 above.
\end{remark}
\begin{lemma}\label{lem:b'sq}
  The inclusion $O(B')\to O(B)$ induces an isomorphism $\Sq O(B')\cong \Sq O(B)$.
\end{lemma}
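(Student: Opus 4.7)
The plan is to deduce the lemma from Theorem~\ref{thm:rank} combined with an explicit computation of the $q$-part of the orders of $O(B)$ and $O(B')$.

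First, I would observe that extension by the identity on $B''$ gives an injective group homomorphism $\iota\colon O(B')\to O(B)$. Hence $\iota(\Sq O(B'))$ is a $q$-subgroup of $O(B)$, contained in some Sylow $q$-subgroup. The lemma will therefore follow once we show $|O(B')|_q=|O(B)|_q$, where $|{\cdot}|_q$ denotes the $q$-part. Using Corollary~\ref{cor:rank2}, $[O(B):O(B_{red})]$ is a power of $p$, and similarly for $B'$, so the $q$-parts are unchanged under reduction. Thus we may assume $k=1$ and apply Theorem~\ref{thm:rank} directly to get closed formulas.

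Next, I would record a few elementary facts. Write $e=[q|p]$ and recall that $s$ is the maximal integer with $q^s\mid p^{2e}-1$. A brief analysis of the cases $\eta(q)=\pm 1$ (using that if $\eta(q)=1$ then the multiplicative order of $p$ modulo $q$ equals $e$ and must be odd by minimality of $[q|p]$, while if $\eta(q)=-1$ the order is $2e$) shows that $q\mid p^{2i}-1$ if and only if $e\mid i$. The lifting-the-exponent lemma then gives $v_q(p^{2ej}-1)=s+v_q(j)$ for $j\ge 1$, and $v_q(p^{ed}-\eta(q)^d)=s+v_q(d)$. Finally, $v_q(p^m-\epsilon)=0$ whenever $m$ is not a multiple of $e$ or $\epsilon\neq\eta(q)^{m/e}$.

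With these ingredients in place I would run through the three cases from the construction preceding the lemma. In Case~1 the statement is tautological. In Case~2 ($0<R<2e$) the extra indices contributed by the rank increase from $2ed$ to $2ed+R$ contain no multiples of $e$, and the possible middle factor $p^m-\epsilon$ that appears for even rank satisfies $m=ed+R/2$ with $0<R/2<e$, so $p^m\not\equiv\pm 1\pmod q$ and this factor is coprime to $q$; thus both $|O(B)|_q$ and $|O(B')|_q$ equal $q^{sd+v_q(d!)}$. In Case~3 ($R=0$ but $\epsilon(B)\neq\eta(q)^d$) the factor $p^{ed}-\epsilon(B)$ is coprime to $q$, so $|O(B)|_q=\prod_{j=1}^{d-1}(p^{2ej}-1)_q=q^{s(d-1)+v_q((d-1)!)}$, while $|O(B')|_q$ with $d'=d-1$ evaluates by the Case~1 formula to the same quantity.

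The main obstacle I expect is the number-theoretic bookkeeping of Case~3: there the loss of one rank-$2e$ block from $B$ to $B'$ must exactly offset the disappearance of the middle factor $(p^m-\epsilon)$ in $|O(B)|$ compared to its nonzero contribution in $|O(B')|$. The LTE identity $v_q(p^{ed'}-\eta(q)^{d'})=s+v_q(d')$ is precisely what makes these two accountings agree, and verifying this cleanly is the technical heart of the argument.
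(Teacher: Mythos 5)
Your argument is correct and follows essentially the same route as the paper's proof: you embed a Sylow $q$-subgroup of $O(B')$ into a Sylow subgroup of $O(B)$ via the extension-by-identity map and then show the $q$-parts of the two orders agree, using Corollary~\ref{cor:rank2} to reduce to $k=1$ and Theorem~\ref{thm:rank} for the order formulas. The only difference is that you spell out the number-theoretic verification (via lifting the exponent and the case analysis on $R$ and $\epsilon$) that the paper merely asserts when it says $q$ does not divide $|O(B)|/|O(B')|$.
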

\begin{proof}
  Let $G$ be a Sylow $q$-group of $O(B')$.
  Write $i\colon O(B')\to O(B)$ for the inclusion induced map. Then $i(G)$ is a $q$ subgroup of $O(B)$ and as such, it is contained in some Sylow group $H$ of $O(B)$.
  Now $|H|/|G|$ is equal to the maximal exponent $s$ such that $q^s$ divides $|O(B)|/|O(B')|$. However, 
  by Theorem~\ref{thm:rank} and Corollary~\ref{cor:rank2}, $q$ does not divide $|O(B)|/|O(B')|$. Therefore $|H|/|G|=1$ and $i(G)\subset H$, so $i(G)=H$.
\end{proof}
\begin{definition}\label{def:b'sq}
  The form $B'$ is called the \emph{maximal $q$-regular subform of $B$}. The form $B''$ is called the \emph{complementary} form.
\end{definition}

\section{Elements of maximal order in Sylow groups}\label{sec:maximal}
Let $G$ be a finite group. For a prime $q$ we define  $\mu_q(G)$ to be %the maximal order of elements of $G$. We write $\mu_q(G)$ for
the maximal order of those elements of $G$, whose order is a power of $q$. We have the following result,
which is well-known to the experts in group theory. For the reader's convenience we present a proof.

\begin{lemma}\label{lem:muH}
  Let $H$ be a finite $q$-group and $\mu_q(H)=q^t$ for some $t>0$, then $\mu_q(H\wreath \Z_q)=q^{t+1}$.
\end{lemma}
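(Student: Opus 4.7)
The plan is to prove the equality $\mu_q(H\wreath \Z_q)=q^{t+1}$ by establishing both inequalities. Throughout, I write elements of $H\wreath \Z_q=H^q\rtimes_\psi \Z_q$ as pairs $((a_0,\dots,a_{q-1}),\sigma^j)$ where $\sigma$ generates $\Z_q$ and $\psi_\sigma$ acts on $H^q$ by a cyclic shift of coordinates. The multiplication rule in the semidirect product gives
\[
((a_0,\dots,a_{q-1}),\sigma^j)\cdot((b_0,\dots,b_{q-1}),\sigma^\ell)=\bigl((a_0 b_{-j},a_1 b_{1-j},\dots),\sigma^{j+\ell}\bigr),
\]
with indices read modulo $q$. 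Since $H\wreath \Z_q$ is a $q$-group, every element has $q$-power order, and I only need to control how large that power can be.

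For the lower bound $\mu_q(H\wreath\Z_q)\ge q^{t+1}$, I would pick $h\in H$ of order exactly $q^t$ and consider the element $x=((h,1,\dots,1),\sigma)$. A short iteration using the multiplication rule above shows that
\[
x^q=\bigl((h,h,\dots,h),1\bigr),
\]
and this lies in the diagonal copy of $H$ inside $H^q$, so it has the same order as $h$, namely $q^t$. Consequently $x$ has order $q\cdot q^t=q^{t+1}$.

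For the upper bound $\mu_q(H\wreath\Z_q)\le q^{t+1}$, I would take an arbitrary element $y=((a_0,\dots,a_{q-1}),\sigma^j)$ and split into two cases. If $j=0$, then $y\in H^q$ and its order is the maximum of $\ord(a_i)$, hence at most $q^t<q^{t+1}$. If $j\ne 0$, then because $q$ is prime, $\sigma^j$ still generates $\Z_q$; iterating the multiplication formula yields
\[
y^q=\bigl((c_0,c_1,\dots,c_{q-1}),1\bigr),
\]
where each $c_i$ is a cyclic product of all the $a_\ell$'s (starting at different positions). These cyclic products are pairwise conjugate in $H$, so they share a common order, which is at most $\mu_q(H)=q^t$. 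Therefore $y^{q^{t+1}}=(y^q)^{q^t}=1$, as required.

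There is no real obstacle here; the only point that needs a moment of care is writing out the power formula in the semidirect product correctly and verifying that the $q$ cyclic products $c_i$ are conjugate (so that one bound on the order of a single $c_i$ controls all of them). Combining the two inequalities gives $\mu_q(H\wreath\Z_q)=q^{t+1}$.
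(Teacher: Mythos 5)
Your proof is correct and follows essentially the same route as the paper: the lower bound uses the same element $((h,1,\dots,1),\sigma)$ whose $q$-th power is the diagonal, and your upper bound is the paper's power-formula argument (there stated as the general inequality $\mu_q(K\rtimes_\psi H)\le\mu_q(K)\mu_q(H)$) specialized directly to the wreath product with $\Z_q$. The conjugacy of the cyclic products $c_i$ is a harmless but unnecessary observation, since every element of the $q$-group $H$ already has order at most $\mu_q(H)=q^t$.
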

\begin{proof}
Let $h$ be an element of $H$ such that $\ord(h)=\mu_q(H)$ and let $e$ be the identity of $H$. Then 
$h^{*}=((h,e,e,\ldots,e),1)\in H\wreath \Z_q$ has
order equal to $q^{t+1}$. Therefore $\mu_q(H\wreath\Z_q)\ge q^{t+1}$. 

To obtain the opposite inequality we first observe that if $G=K\rtimes_\psi H$, then
$\mu_q(G)\le\mu_q(K)\mu_q(H)$. Indeed, consider
$x=(k,h)\in K\rtimes_\psi\Z_q$, then
\[x^u=(k\psi_h(k)\dots\psi_{h^{u-1}}(k),h^u).\]
Suppose $u$ is the minimal power such that $h^u=e\in H$ and set $y=k\psi_h(k)\dots\psi_{h^{u-1}}(k)$. Then
we have $x^u=(y,e)$ for some $y$ and thus $x^{ru}=(y^r,e)$. From this it easily follows that $\mu_q(G)\le \mu_q(K)\mu_q(H)$.

Given the last inequality, by Definition~\ref{def:wreath} we write $H\wreath\Z_q$ as $K\rtimes_{\psi}\Z_q$, where $K=\Pi_{g\in\Z_q}H$. 
We have $\mu_q(K)=\mu_q(H)$, hence $\mu(H\wreath\Z_q)\le\mu_q(H)\mu_q(\Z_q)$ as desired.
%%%%On the other hand, we have $\mu(H\wreath \Z_q)\le \mu(H) \mu(\Z_q)=q^{s+1}$.\
\end{proof}

Lemma~\ref{lem:muH} combined with Weir's theorem (Theorem~\ref{thm:weir}) allows us to describe all spaces such that their orthogonal groups have elements 
of given order $q^r$.  
The following result
proves part (a) of Theorem~\ref{thm:main} from the introduction.

\begin{theorem}\label{21}
Let $s$ be a natural number such that $q^s |p^{2[q|p]}-1$ and $q^{s+1} \notdivides p^{2[q|p]}-1$. 
Let $B$ be a $p^k$-form. 

For $r\ge s$ the group $O(B)$ contains an element of order $q^r$ if and only if $B$ has rank $2[q|p]q^{r-s}$ and $\epsilon(B)=\eta(q)^{q^{r-s}}$, 
or $B$ has rank strictly greater than $2[q|p]q^{r-s}$.

If $r<s$, the group $O(B)$ contains an element of order $q^r$ if and only if $B$ has rank $2[q|p]$ with $\epsilon(B)=\eta(q)$, or rank $B$
is greater than $2[q|p]$.
\end{theorem}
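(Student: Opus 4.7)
The plan is to convert the question into a computation of $\mu_q$ of a Sylow $q$-subgroup, and then apply Weir's theorem. Since every element of $q$-power order in a finite group lies in some Sylow $q$-subgroup, $O(B)$ has an element of order $q^r$ if and only if $\mu_q(\Sq O(B)) \ge q^r$. By Lemma~\ref{lem:b'sq}, the inclusion $O(B')\hookrightarrow O(B)$ induces an isomorphism on Sylow $q$-subgroups, where $B'$ is the maximal $q$-regular subform of $B$ from Definition~\ref{def:b'sq}, so I may work with $B'$ throughout.

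Next I would write $\rank(B') = 2[q|p]\,d'$ and expand $d' = a_0 + a_1 q + a_2 q^2 + \cdots$ in base $q$. Weir's theorem (Theorem~\ref{thm:weir}) then identifies $\Sq O(B')$ with the direct product
\[
\Z_{q^s}^{a_0} \times (\Z_{q^s}\wreath\Z_q)^{a_1} \times ((\Z_{q^s}\wreath\Z_q)\wreath\Z_q)^{a_2}\times\cdots
\]
Starting from $\mu_q(\Z_{q^s}) = q^s$ and iterating Lemma~\ref{lem:muH}, the $i$-th tower satisfies $\mu_q = q^{s+i}$ whenever $a_i > 0$, and $\mu_q$ of a direct product is the maximum of the $\mu_q$'s of the factors. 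Hence $\mu_q(\Sq O(B')) = q^{s+j}$ where $j = \max\{i : a_i \neq 0\}$, with the convention that $\mu_q = 1$ if $d' = 0$ (in which case $B'$ is trivial). In particular $\mu_q(\Sq O(B'))\ge q^r$ is equivalent to $d' \ge q^{r-s}$ when $r\ge s$, and to $d' \ge 1$ when $r < s$.

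It remains to translate the inequality on $d'$ into a condition on $(B,\spair)$. Writing $\rank(B) = 2[q|p]d + R$ with $0 \le R < 2[q|p]$, the construction of the maximal $q$-regular subform yields $d' = d$ in Cases~1 and~2, and $d' = d-1$ in Case~3 (where $R = 0$ and $\epsilon(B) \ne \eta(q)^d$). A short case-by-case check then shows: for $r\ge s$, the condition $d'\ge q^{r-s}$ holds precisely when $\rank(B) = 2[q|p]q^{r-s}$ with $\epsilon(B) = \eta(q)^{q^{r-s}}$ (Case~1 with $d = q^{r-s}$) or when $\rank(B) > 2[q|p]q^{r-s}$; for $r<s$, $d'\ge 1$ fails precisely when $\rank(B) < 2[q|p]$ or when $\rank(B) = 2[q|p]$ and $\epsilon(B)\ne\eta(q)$ (Case~3 with $d = 1$). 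The only non-routine ingredient is the inductive $\mu_q$-computation for the wreath tower, already packaged in Lemma~\ref{lem:muH}; the rest is combinatorial bookkeeping that matches $d'$ with the concrete invariants $\rank(B)$ and $\epsilon(B)$.
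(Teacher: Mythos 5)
Your proof is correct and takes essentially the same route as the paper's: reduce to the maximal $q$-regular subform via Lemma~\ref{lem:b'sq}, apply Weir's theorem (Theorem~\ref{thm:weir}), and compute $\mu_q$ of the wreath towers by iterating Lemma~\ref{lem:muH}, using that $\mu_q$ of a product is the maximum over the factors. The only difference is that you make the final case-by-case translation from the parameter of $B'$ back to $\rank(B)$ and $\epsilon(B)$ more explicit than the paper, which leaves that bookkeeping implicit.
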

\begin{proof}
  Let $B'$ be a maximal $q$-regular subform of $B$ as in Definition~\ref{def:b'sq}. By Lemma~\ref{lem:b'sq} we have an isomorphism
  of Sylow $q$-groups of $B$ and $B'$. 
  Therefore in the proof of Theorem~\ref{21} it is enough to restrict to the case, where  $B$ has rank $2[q|p]d$ and $\epsilon(B)=\eta(q)^d$.
  We clearly have $\mu_q(G\times H)=\max(\mu_q(G),\mu_q(H))$, hence
  by \eqref{eq:weir} we have
  \[\mu_q(O(B))=\max_{i\colon a_i\neq 0} \mu_q(\underbrace{(((\Z_{q^s}\wreath\Z_q)\wreath\Z_q)\dots)\wreath\Z_q}_{i\textrm{ times}}),\]
  where $a_i$ form the $q$-adic presentation of $d$, that is, $d=a_0+a_1q+a_2q^2+\dots$.

  By Lemma~\ref{lem:muH} we obtain that $\mu_q((\Z_{q^s}\wreath\Z_q)\dots\wreath\Z_q)=q^{s+i}$ if the wreath product is taken $i$ times.
  In particular
  \begin{equation}\label{eq:muqOB}\mu_q(O(B))=\max_{i\colon a_i\neq 0} q^{s+i}.\end{equation}
  It follows that if $r\ge s$, we have $\mu_q(O(B))\ge q^r$ if and only if some of the $a_t>0$ for $t\ge r-s$. This amounts to saying
  that $d\ge q^{r-s}$, if $r\ge s$, or $d\ge 1$ if $r<s$.
\end{proof}
\section{Fixed points of isometries of order $q^r$}\label{sec:fixed}
Let $B$ be a $p^k$-form and let $\phi\colon B\to B$ be an isometry. We study the fixed point set
\[\Fix\phi=\{x\in B\colon \phi(x)=x\}.\]

The following result gives a one way implication in Theorem~\ref{thm:main}(b).
\begin{theorem}\label{thm:fixedexist}
  Let $B$ be a $p^k$-form of rank $n=2[q|p]d+R$ with $0\le R <2[q|p]$. Let $\phi\in\Sq O(B)$ has order $q^r$.
  Write also $s$ for the maximal integer such that $q^s|p^{2[q|p]}-1$.
  \begin{itemize}
    \item[(a)] If $R>0$, then $\phi$ has a fixed subspace of rank at least $R$, more precisely
      there is an orthogonal decomposition of $B$ into submodules $B_1$ and $B_2$, such that $B_1$
      has rank divisible by $2[q|p]$ and $\phi|_{B_2}$ is the identity.
    \item[(b)] If $R=0$ and $\eta(q)^n\neq \epsilon(B)$, then $\phi$ has a fixed subspace of rank at least $2[q|p]$.
%    \item[(c)] If $r>s$, then $d\ge q^{r-s}$.
  \end{itemize}
\end{theorem}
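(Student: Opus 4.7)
The plan is to reduce to the maximal $q$-regular subform construction from Definition~\ref{def:b'sq} and then apply Sylow conjugacy. Write $B = B' \oplus B''$ orthogonally, where $B'$ is the maximal $q$-regular subform and $B''$ is the complementary form. Under the hypotheses of the theorem, $B''$ is nontrivial: in part~(a) we are in Case 2 of the trichotomy preceding Lemma~\ref{lem:b'sq} and $B''$ has rank $R$, while in part~(b) we are in Case 3 and $B''$ has rank $2[q|p]$. By Lemma~\ref{lem:b'sq}, the inclusion $i \colon O(B') \hookrightarrow O(B)$, which extends an isometry of $B'$ by the identity on $B''$, sends some Sylow $q$-subgroup of $O(B')$ isomorphically onto a Sylow $q$-subgroup of $O(B)$.

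I would then use Sylow's theorem to transport $\phi$ into this distinguished Sylow subgroup. The cyclic group $\langle \phi \rangle$ is a $q$-group and hence lies in some Sylow $q$-subgroup $P \subset O(B)$; by Sylow conjugacy there exists $g \in O(B)$ with $g^{-1} P g = i(\Sq O(B'))$. Consequently $g^{-1} \phi g = i(\phi')$ for some $\phi' \in O(B')$ of order $q^r$. By construction, $i(\phi')$ acts as $\phi'$ on $B'$ and as the identity on $B''$.

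Conjugating back, set $B_1 = g(B')$ and $B_2 = g(B'')$. Since $g$ is an isometry of $B$, the submodules $B_1$ and $B_2$ are mutually orthogonal and have ranks equal to $\rk B'$ and $\rk B''$ respectively. The isometry $\phi = g \cdot i(\phi') \cdot g^{-1}$ preserves this decomposition, acts as the conjugate $g \phi' g^{-1}$ on $B_1$, and acts as the identity on $B_2$. Since $\rk B_1$ is a multiple of $2[q|p]$ and $B_2 \subseteq \Fix \phi$, this immediately yields the orthogonal decomposition $B = B_1 \oplus B_2$ required by part~(a) with $\rk B_2 = R$, and a fixed subspace of rank at least $2[q|p]$ as required by part~(b). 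I do not foresee any serious obstacle in this argument: all the content is encoded in Lemma~\ref{lem:b'sq} together with Sylow's theorem, and the orthogonality of $B_1$ and $B_2$ is automatic from $g$ being an isometry of $B$.
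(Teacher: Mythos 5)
Your proposal is correct and follows essentially the same route as the paper: decompose $B=B'\oplus B''$ via the maximal $q$-regular subform of Definition~\ref{def:b'sq}, use Lemma~\ref{lem:b'sq} to identify a Sylow $q$-subgroup of $O(B)$ with $i(\Sq O(B'))$, and conjugate $\phi$ into it so that the image of $B''$ under the conjugating isometry lies in $\Fix\phi$. Your handling of the conjugation direction and the explicit orthogonality of $B_1=g(B')$ and $B_2=g(B'')$ matches (and slightly tidies) the paper's argument.
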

\begin{proof}
  Write $B=B'\oplus B''$, where $B'$ and $B''$ are as in Definition~\ref{def:b'sq}. Note that $B''$ is non-trivial.
  Let $i\colon O(B')\to O(B)$ be the inclusion induced map. Recall that $i$ takes an element $\psi\in O(B')$ and extends it by the identity
  on $B''$. This implies, in particular, that $B''$ is contained in the fixed point set of any element of $O(B)$ that is in the image of $i$.

  Let $G$ be a Sylow group of $O(B')$. By Lemma~\ref{lem:b'sq} we have that $i(G)$ is a Sylow group of $O(B)$.

  Take $\phi\in O(B)$ of rank $q^r$ and let $H\subset O(B)$ be a Sylow group containing $\phi$. As all Sylow groups are conjugate, we have
  that $i(G)=gHg^{-1}$ for some $g\in O(B)$. In particular $\phi=g\phi'g^{-1}$ for some $\phi'\in i(G)$. Define
  $B_1=g B'$ and $B_2=g B''$. Then $B_1$ has rank divisible by $2[q|p]$ and $B_2$ contained in the fixed point set of $\phi$.
 %
 % To prove (c) we let $\wt{\phi}$ be the restriction of $\phi$ to $B'$. It has still order $q^r$. The statement follows
 % from Theorem~\ref{21}.
\end{proof}

\section{Isometries with no non-zero fixed points}\label{sec:action}

The goal of this section is to provide the reverse (as opposed to Theorem~\ref{thm:fixedexist}) implication of Theorem~\ref{thm:main}.
\begin{theorem}\label{thm:inverse}
Suppose we have a $p^k$-form
$(B,\spair)$ of rank $n=2[q|p]d$ and $\eta(q)^d=\epsilon(B)$. Suppose also that $\Sq O(B)$ contains an element of order $q^r$.
There exists $\psi\in O(B)$ of order $q^r$ such that $\psi$ has no non-zero fixed points.
\end{theorem}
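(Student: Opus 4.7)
The plan is to reduce to $\Z_p$-forms via the $(\mathrm{mod}\ p)$-reduction, then decompose the reduced form as an orthogonal sum of one ``large'' block of rank $2[q|p]q^{\tilde r}$ (with $\tilde r=\max(r-s,0)$) together with smaller rank-$2[q|p]$ blocks, and construct the isometry explicitly on each block. Suppose one has produced $\bar\psi\in\Sq O(B_{red})$ of order $q^r$ with no non-zero fixed points; by Theorem~\ref{thm:reduction} there is a unique lift $\psi\in\Sq O(B)$ with $\pi(\psi)=\bar\psi$, automatically of order $q^r$. If $\psi(x)=x$ for some non-zero $x$, pick $j\le k-1$ maximal with $x\in p^jB$ and write $x=p^jy$ with $y\notin pB$; then $p^j\psi(y)=p^jy$ forces $\psi(y)-y\in p^{k-j}B\subseteq pB$, so $\pi(y)\ne 0$ is fixed by $\bar\psi$, a contradiction. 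Hence it suffices to work with $k=1$.

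Using Theorem~\ref{21}, the hypothesis on $\Sq O(B)$ gives $d\ge q^{\tilde r}$. By Lemma~\ref{lem:isaddit} and the uniqueness of $\Z_p$-forms of given rank and index (Theorem~\ref{thm:class}) I decompose orthogonally
\[
B_{red} = W \oplus V_1\oplus\cdots\oplus V_{d-q^{\tilde r}},
\]
with $W$ of rank $2[q|p]q^{\tilde r}$ and $\epsilon(W)=\eta(q)^{q^{\tilde r}}$, and each $V_j$ of rank $2[q|p]$ with $\epsilon(V_j)=\eta(q)$; the identity $\eta(q)^{q^{\tilde r}}\cdot\eta(q)^{d-q^{\tilde r}}=\eta(q)^d$ ensures compatibility with the prescribed $\epsilon(B_{red})$. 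On each $V_j$, Theorem~\ref{thm:weir} gives $\Sq O(V_j)\cong\Z_{q^s}$; a generator $\sigma_j$ of this cyclic group has order $q^s$ and, by an eigenvalue argument over $\Z_p[X]/(X^{q^s}-1)$ (or by Weir's explicit realization of $\Z_{q^s}$ inside $\F_{p^{2[q|p]}}^*$ acting by multiplication), acts on $V_j$ without non-zero fixed points. Set $\psi_{V_j}=\sigma_j^{q^{\max(s-r,0)}}$; this is an isometry of order $\min(q^s,q^r)$ and still has no non-zero fixed points, because it is a non-trivial power of $\sigma_j$ inside $\Z_{q^s}$.

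When $\tilde r\ge 1$, identify $W=V^{\oplus q^{\tilde r}}$ (with $V$ one of the blocks above) and define $\psi_W$ by iterated wreath products: for $i=1,\ldots,\tilde r$, if $\psi^{(i-1)}$ is the isometry built at the previous stage on $V^{\oplus q^{i-1}}$, set
\[
\psi^{(i)}(x_0,x_1,\ldots,x_{q-1})=\bigl(\psi^{(i-1)}(x_{q-1}),\,x_0,\,x_1,\,\ldots,\,x_{q-2}\bigr),
\]
starting from $\psi^{(0)}=\sigma$ (the order-$q^s$ generator on $V$). A direct induction yields $\bigl(\psi^{(i)}\bigr)^q=(\psi^{(i-1)},\ldots,\psi^{(i-1)})$, so $\psi^{(i)}$ has order $q^{s+i}$; and $\psi^{(i)}(x)=x$ forces $x_0=\cdots=x_{q-1}$ together with $\psi^{(i-1)}(x_0)=x_0$, hence by induction $x_0=0$. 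Setting $\psi_W=\psi^{(\tilde r)}$, the orthogonal sum $\bar\psi=\psi_W\oplus\psi_{V_1}\oplus\cdots\oplus\psi_{V_{d-q^{\tilde r}}}$ has order $q^r$ and no non-zero fixed points, and lifting via Theorem~\ref{thm:reduction} finishes the proof.

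The main obstacle is the wreath induction step, i.e.\ simultaneously verifying that each iteration multiplies the order by $q$ while preserving the no-fixed-point property. Both parts reduce, at the base, to the statement that the generator of $\Sq O(V)\cong\Z_{q^s}$ on a rank-$2[q|p]$ block of index $\eta(q)$ acts without non-zero fixed points; this is the content of Lemma~\ref{lem:specialcase} in the paper's scheme and follows either from Weir's explicit realization of the Sylow subgroup inside $\F_{p^{2[q|p]}}^*$, or from a $\Z_p[X]/(X^{q^s}-1)$-module decomposition combined with Theorem~\ref{thm:fix1} (which forces the trivial isotypic component to vanish in rank $2[q|p]$ given the correct $\epsilon$-index).
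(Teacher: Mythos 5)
Your argument is correct in substance and shares the paper's skeleton: decompose the form into one block of rank $2[q|p]q^{\wt{r}}$ with index $\eta(q)^{q^{\wt{r}}}$ plus rank-$2[q|p]$ blocks of index $\eta(q)$ (Lemma~\ref{lem:isaddit} plus the classification), make the isometry fixed-point free blockwise, and pass between $\Z_{p^k}$ and $\Z_p$ via the \redu. It differs from the paper's proof in two genuine ways. First, on the large block the paper constructs nothing explicitly: Theorem~\ref{21} gives an element of order $q^r$ in $O(B_0)$, and Lemmas~\ref{lem:specialcase} and~\ref{lem:notsospecialanymore} (Maschke splitting via Proposition~\ref{prop:fixedsplit} plus rank counting) show that \emph{every} such element has no non-zero fixed points; you instead build the isometry by the iterated shift $\psi^{(i)}(x_0,\dots,x_{q-1})=(\psi^{(i-1)}(x_{q-1}),x_0,\dots,x_{q-2})$, and your induction (order multiplied by $q$ at each stage, fixed vectors forced to be diagonal and then zero) is correct, as is the check that the shift is an isometry of the orthogonal sum; this is more explicit and self-contained. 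Second, you construct $\bar\psi$ on $B_{red}$ and lift through the Sylow isomorphism of Theorem~\ref{thm:reduction}, showing that fixed points descend (your descent computation is exactly Lemma~\ref{lem:fixedpasses}), whereas the paper builds the isometry directly on the $p^k$-form and uses the reduction only inside the auxiliary lemmas. One unstated point in the lifting step: $\bar\psi$ must first be conjugated into the particular Sylow subgroup $\pi(\Sq O(B))$; this is harmless because conjugation by an isometry preserves both the order and the triviality of the fixed-point set, but it should be said.

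Two small repairs. (1) When $\wt{r}=0$ and $r<s$, your final formula gives $\psi_W=\psi^{(0)}=\sigma$ of order $q^s\neq q^r$; treat the $W$-block like the $V_j$'s and take $\sigma^{q^{s-r}}$ there as well, so that the block sum has order exactly $q^r$. (2) The justification ``still has no non-zero fixed points, because it is a non-trivial power of $\sigma_j$'' is not a valid principle on its own, since powers of a fixed-point-free isometry can acquire fixed points; what saves it is either Weir's realization of $\Sq O(V_j)$ by multiplication by roots of unity in $\F_{p^{2[q|p]}}$, where every non-identity element visibly fixes only $0$, or the rank-counting argument behind Lemma~\ref{lem:notsospecialafterall}: an isometry of order $q^{\ell}$, $\ell\ge 1$, of a rank-$2[q|p]$ block with a non-zero fixed point would, by Proposition~\ref{prop:fixedsplit}, restrict to an element of order $q^{\ell}$ on a summand of rank strictly less than $2[q|p]$, contradicting Theorem~\ref{21}. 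You cite both routes in your final paragraph, so this is a matter of phrasing rather than a gap in substance.
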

The remaining part of Section~\ref{sec:action} is devoted to the proof of Theorem~\ref{thm:inverse}. We first prove some auxiliary results,
then give the proof of Theorem~\ref{thm:inverse}.

The first step in the proof of Theorem~\ref{thm:inverse} is a result, which is of independent interest.
\begin{proposition}\label{prop:fixedsplit}
  Let $(B,\spair)$ be a $p$-form and let $\phi\in O(B)$ be of order $q^r$ for some $r>0$. Then there exists an orthogonal decomposition of $B$ into free $\Z_{p}$-modules $B_{fix}$ and $B_{oth}$,
  such that $\phi$ is the identity of $B_{fix}$ and $\phi$ acts with no non-zero fixed points on $B_{oth}$.
\end{proposition}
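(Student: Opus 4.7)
\medskip

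The plan is to exhibit $B_{fix}$ and $B_{oth}$ as the image and kernel of an averaging projector. Since $p\neq q$, the integer $q^r$ is invertible modulo $p$, so we may set
\[
e = \frac{1}{q^r}\sum_{i=0}^{q^r-1}\phi^i \;\in\; \mathrm{End}_{\Z_p}(B).
\]
Using $\phi^{q^r}=\mathrm{id}$ one checks the standard properties: $e\phi=\phi e=e$, and consequently $e^2=e$. Thus $e$ is an idempotent endomorphism commuting with $\phi$. Define
\[
B_{fix} := eB = \Fix(\phi), \qquad B_{oth} := (1-e)B = \ker(e),
\]
so that $B = B_{fix}\oplus B_{oth}$ as $\Z_p$-vector spaces (both summands are automatically free since $\Z_p$ is a field). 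Both summands are $\phi$-invariant because $\phi$ commutes with $e$. On $B_{fix}$, the map $\phi$ acts as the identity by construction (if $x=ey$ then $\phi x = \phi e y = ey = x$). If $x\in B_{oth}$ satisfies $\phi(x)=x$, then $ex = x$; but $x\in\ker e$, so $x=0$. Hence $\phi$ has no non-zero fixed point on $B_{oth}$.

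It remains to verify that $B_{fix}$ and $B_{oth}$ are orthogonal with respect to $\spair$. The main observation is that $\phi$ is an isometry, so its adjoint with respect to $\spair$ is $\phi^{-1}$; averaging $\phi^{-1},\phi^{-2},\ldots$ over a full period gives the same sum as averaging $\phi,\phi^2,\ldots$, so the adjoint of $e$ equals $e$ itself. Therefore, for $x=ex'\in B_{fix}$ and $y=(1-e)y'\in B_{oth}$,
\[
\spair(x,y) = \spair(ex',(1-e)y') = \spair(x', e(1-e)y') = \spair(x',0) = 0.
\]
This proves orthogonality. The decomposition $B=B_{fix}\oplus B_{oth}$ is therefore the required orthogonal $\phi$-invariant splitting.

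I expect no serious obstacle: the only delicate point is the self-adjointness of $e$, which hinges on the fact that $\phi$ is an isometry and that inversion permutes the index set $\{0,1,\dots,q^r-1\}$ modulo $q^r$; everything else is a standard Maschke-style averaging argument made possible by $\gcd(p,q)=1$.
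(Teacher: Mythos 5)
Your proof is correct, and it takes a somewhat more hands-on route than the paper. The paper treats $B$ as a module over $\Lambda=\Z_p[\zeta]$ with $\zeta$ acting as $\phi$, invokes Maschke's theorem to get semisimplicity, splits $B$ into cyclic $\Lambda$-modules, collects the trivial ones into $B_{fix}$, and then proves orthogonality of $B_{fix}$ and $B_{oth}$ by a Schur's lemma argument applied to the composition of the inclusion with a projection. You instead write down the averaging idempotent $e=\frac{1}{q^r}\sum_{i=0}^{q^r-1}\phi^i$ explicitly (legitimate since $p\neq q$, the standing assumption of the paper, makes $q^r$ invertible in $\Z_p$), identify $B_{fix}=eB=\Fix\phi$ and $B_{oth}=\ker e$, and get orthogonality from the self-adjointness of $e$ with respect to $\spair$, which follows because the adjoint of the isometry $\phi$ is $\phi^{-1}$ and inversion permutes the powers of $\phi$. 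The two arguments share the same engine (averaging over the cyclic group, i.e.\ Maschke), but yours avoids the decomposition into cyclic modules and Schur's lemma altogether, replacing them with a two-line computation $e(1-e)=0$; it also sidesteps the slightly awkward phrasing in the paper's orthogonality step, where an ``orthogonal projection to $B_{oth}$'' is mentioned before orthogonality has been established. The paper's formulation, on the other hand, makes visible the finer $\Lambda$-module structure of $B_{oth}$ (the sum of nontrivial isotypic pieces), which is conceptually closer to the eigenvalue-type analysis used elsewhere in Sections~\ref{sec:sylow}--\ref{sec:action}, though none of that extra structure is needed for this particular statement.
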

%\begin{remark}
%  We point out that Proposition~\ref{prop:fixedsplit} is stated and proved for $p$-forms only, and not for $p^k$-forms with $k>1$.
%\end{remark}
\begin{proof}
  Let $\zeta$ be a primitive root of unity of order $q^r$ and consider the ring $\Lambda=\Z_{p}[\zeta]$. By Maschke's theorem, $\Lambda$ is semisimple. On the other hand
  $B$ has a structure of a $\Lambda$-module, where multiplication by $\zeta$ corresponds the action of $\phi$. Hence, $B$ can be written as a direct sum of cyclic $\Lambda$-modules.
  Let $B_{fix}$
  be the sum of those cyclic modules on which $\zeta$ acts trivially and $B_{oth}$ be the sum of all the others. The decomposition is orthogonal, indeed
  consider the map $B_{fix}\to B_{oth}$ given by the inclusion of $B_{fix}$ to $B$ followed by the orthogonal projection to $B_{oth}$. This map has to be trivial by Schur's lemma, and
  therefore $B_{fix}$ is orthogonal to $B_{oth}$. By the construction $B_{fix}$ is the fixed point set of $\phi$ and $\phi$ acts on $B_{oth}$ with no non-zero fixed points.
\end{proof}
%Our next step towards proving Theorem~\ref{thm:inverse} is actually one of its special cases.
Proposition~\ref{prop:fixedsplit} allows us to prove the following lemma, which
is the key argument in the proof of Theorem~\ref{thm:inverse}.
\begin{lemma}\label{lem:specialcase}
  Suppose $(B,\spair)$ is a $p$-form. Set as usual $s$ to be the maximal integer such that
  $q^{s}|(p^{2[q|p]}-1)$. Assume $r\ge s$  and the rank of $B$ is $2[q|p]q^{r-s}$. 
  If $\epsilon(B)=\eta(q)^{q^{r-s}}$ then an isometry $\psi\in O(B)$ of order $q^r$ has no fixed non-zero fixed points on $B$.
\end{lemma}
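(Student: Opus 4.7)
The plan is to proceed by contradiction, leveraging Proposition~\ref{prop:fixedsplit} to peel off the fixed subspace of $\psi$ and then invoking Theorem~\ref{21} to show that the orthogonal complement is too small to admit an isometry of order $q^r$.

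First, I would apply Proposition~\ref{prop:fixedsplit} to $\psi$, obtaining an orthogonal decomposition $B = B_{fix} \oplus B_{oth}$ in which $\psi$ restricts to the identity on $B_{fix}$ and acts without non-zero fixed points on $B_{oth}$. Since $\spair$ is non-degenerate and the decomposition is orthogonal, the restrictions of $\spair$ to $B_{fix}$ and $B_{oth}$ are each non-degenerate, so both summands are honest $p$-forms to which the machinery of Sections~\ref{sec:groups}--\ref{sec:maximal} applies.

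Next, I would assume for contradiction that $B_{fix} \neq 0$. Then $\mathrm{rank}(B_{oth}) < \mathrm{rank}(B) = 2[q|p]q^{r-s}$. On the other hand, since $\psi$ is the identity on $B_{fix}$, the order of $\psi$ on $B$ coincides with the order of $\psi|_{B_{oth}}$, which is therefore $q^r$. Hence $O(B_{oth})$ contains an element of order $q^r$. Applying Theorem~\ref{21} to $B_{oth}$ in the regime $r \ge s$, this forces $\mathrm{rank}(B_{oth}) \ge 2[q|p]q^{r-s}$, contradicting the strict inequality above. Thus $B_{fix} = 0$, which is exactly the conclusion of the lemma.

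The argument is short and I do not expect a real obstacle: Proposition~\ref{prop:fixedsplit} supplies the orthogonal splitting via Maschke's theorem and Schur's lemma (whose application hinges on $\Z_p[\zeta]$ being semisimple, and hence on $p \neq q$), while Theorem~\ref{21} supplies the sharp rank bound. It is worth noting that the hypothesis $\epsilon(B) = \eta(q)^{q^{r-s}}$ is used only to guarantee, again via Theorem~\ref{21}, that an isometry $\psi$ of order $q^r$ exists on $B$ at all; it plays no role in the rank contradiction drawn on $B_{oth}$, since there the rank inequality is already strict.
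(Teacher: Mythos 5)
Your proof is correct and takes essentially the same route as the paper: argue by contradiction, split $B=B_{fix}\oplus B_{oth}$ via Proposition~\ref{prop:fixedsplit}, note that $\psi|_{B_{oth}}$ still has order $q^r$, and derive a contradiction from the rank bound in Theorem~\ref{21} since $\rank B_{oth}<2[q|p]q^{r-s}$. Your side remark that the hypothesis $\epsilon(B)=\eta(q)^{q^{r-s}}$ is not actually needed for the contradiction (it only ensures such a $\psi$ exists at all) is also accurate.
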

\begin{proof}
  Suppose contrary, that is, take $\psi\in O(B)$ of order $q^r$ and assume it has a fixed subspace. Then by Proposition~\ref{prop:fixedsplit} we infer that $B$ splits
  as a direct sum of $B_{fix}$ and $B_{oth}$. The map $\psi$ preserves the splitting and write $\psi'$ for the restriction of $\psi$ to $B_{oth}$. Then
  $\psi'\in O(B_{oth})$ has order $q^r$. But the rank of $B_{oth}$ is smaller than $2[q|p]q^{r-s}$, so by Theorem~\ref{21} $O(B_{oth})$ cannot
  contain an element of order $q^r$. This contradiction shows that $\psi$ itself cannot have a non-zero fixed point.
\end{proof}
Proposition~\ref{prop:fixedsplit} was stated and proved only for $p$-forms, and not for general $p^k$-forms. In fact, it does not hold for
$p^k$-forms in general. As an example one can take the module $\Z_{p^3}^2$ with the standard linking form and a morphism $\phi$ given by the matrix
\begin{equation}\label{eq:Amatrix}A=\begin{pmatrix} 1-2p^2 & -2p \\ 2p & 1-2p^2\end{pmatrix},\end{equation}
Over $\Z_{p^3}$ we have $A^TA=\begin{pmatrix}1 & 0\\0 &1\end{pmatrix}$, hence $\phi$ is an isometry. The point $(p^2,p^2)$ is a non-zero fixed point of $A$, however
there is no free $\Z_{p^3}$-submodule on which $\phi$ acts trivially.

To deal with $p^k$-forms we apply again the \redu.
\begin{lemma}\label{lem:fixedpasses}
  Suppose $(B,\spair)$ is a $p^k$-form, $\psi$ is an isometry and $z\in B$ is a non-zero fixed point of $\psi$. Let $(B_{red},\spair_{red})$ be the \redu{}
  and let $\psi_{red}$ be the reduction of $\psi$. Then $\psi_{red}$ has a non-trivial fixed point.
\end{lemma}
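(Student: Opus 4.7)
The plan is to construct an explicit non-zero fixed point of $\psi_{red}$ from the given fixed point $z$ of $\psi$. The naive attempt — taking the class $[z] \in B_{red}$ — fails when $z$ lies in the kernel of the reduction map (i.e.\ when $z$ is divisible by $p$). The idea is to cure this by dividing out the largest power of $p$ dividing $z$, obtaining a \emph{primitive} vector $w$ which I claim is still fixed modulo $p$.

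More precisely, I would let $j$ be the largest integer with $z \in p^j B$. Since $z \neq 0$ in $B = \Z_{p^k}^n$, we have $0 \le j \le k-1$, so I can write $z = p^j w$ with $w \in B \setminus pB$. From $\psi(p^j w) = \psi(z) = z = p^j w$ and $\Z_{p^k}$-linearity of $\psi$, I get $p^j(\psi(w) - w) = 0$ in $B$. Because $B$ is a free $\Z_{p^k}$-module, the annihilator of $p^j$ is $p^{k-j}B$, and since $j \le k-1$ we have $k - j \ge 1$, hence
\[
\psi(w) - w \;\in\; p^{k-j}B \;\subseteq\; pB.
\]
Passing to the reduction therefore gives $\psi_{red}([w]) = [w]$, and $[w] \neq 0$ since $w \notin pB$. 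There is essentially no obstacle here beyond the initial observation that a fixed point can be primitivised; the rest is a one-line calculation using the exponent of $B$.
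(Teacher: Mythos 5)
Your proof is correct and follows essentially the same route as the paper: the paper also writes $z=p^{\ell}y$ with $y$ primitive and deduces $\psi(y)-y\in p^{k-\ell}B\subseteq pB$, so that $\pi(y)$ is a non-trivial fixed point of $\psi_{red}$. The only cosmetic difference is that the paper first treats the case $\pi(z)\neq 0$ separately, whereas you handle it uniformly by allowing $j=0$.
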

\begin{proof}
  If the reduction  $\pi(z)$ is non-trivial, then clearly $\pi(z)$ is a fixed point of $\psi_{red}$. Otherwise, if $\pi(z)$ is trivial, then $z$ can be written
  as $z=p^\ell y$ for some $\ell>0$ and $y$ such that $\pi(y)\neq 0$. As $\psi(z)-z=0$ and $\psi$ is linear, we infer that $\psi(y)-y=p^{k-\ell}u$ for some $u\in B$.
  But this means that $\psi_{red}(\pi(y))-\pi(y)=0$. Hence $\pi(y)$ is a non-trivial fixed point of $\psi_{red}$.
\end{proof}
\begin{example}
  If $A$ is a matrix as in \eqref{eq:Amatrix} defining a morphism $\phi$ of $\Z_{p^3}^2$, then the \redu{} $\pi(\phi)$ is the identity matrix.
\end{example}
Given Lemma~\ref{lem:fixedpasses} we quickly generalize Lemma~\ref{lem:specialcase} to $p^k$-forms.
\begin{lemma}\label{lem:notsospecialanymore}
  Let $s$ be as in Lemma~\ref{lem:specialcase} and $r\ge s$.
  Let $(B,\spair)$ be a $p^k$-form of rank $2[q|p]q^{r-s}$ and $\epsilon(B)=\eta(q)^{q^{r-s}}$. Any element $\psi\in O(B)$ of order $q^r$ has no
  non-zero fixed points.
\end{lemma}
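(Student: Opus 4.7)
The plan is to reduce the $p^k$-form case to the $p$-form case already handled in Lemma~\ref{lem:specialcase}, using the \redu{} developed in Section~\ref{sec:rank}. Given the putative isometry $\psi$, I would set $\psi_{red}=\pi(\psi)\in O(B_{red})$, where $\pi$ is the reduction map from~\eqref{eq:reductionpi}, and verify that $\psi_{red}$ acting on $B_{red}$ satisfies the hypotheses of Lemma~\ref{lem:specialcase}. That lemma will then yield that $\psi_{red}$ is fixed-point free on non-zero vectors, and the contrapositive of Lemma~\ref{lem:fixedpasses} transfers this conclusion back to $\psi$.

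Verifying the hypotheses of Lemma~\ref{lem:specialcase} for $\psi_{red}$ amounts to three checks. First, $\psi_{red}$ must still have order exactly $q^r$; for this I would invoke Lemma~\ref{lem:hensel}, which tells us that $\ker\pi$ is a $p$-group, and note that since $p\neq q$ no non-trivial $q$-power of $\psi$ can map to the identity under $\pi$, so in particular $\psi_{red}^{q^{r-1}}\neq 1$. Alternatively, one can cite Theorem~\ref{thm:reduction} directly: a Sylow $q$-subgroup of $O(B)$ containing $\psi$ is mapped isomorphically onto a Sylow $q$-subgroup of $O(B_{red})$, preserving orders. Second, the rank of $B_{red}=B/p^{k-1}B$ equals the rank of $B$, namely $2[q|p]q^{r-s}$; this is immediate from the definition. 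Third, and most subtly, one needs $\epsilon(B_{red})=\eta(q)^{q^{r-s}}$.

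For the third check I would appeal to Theorem~\ref{thm:class} to choose a basis of $B$ in which $\spair$ is diagonal with entries $c_1,1,\dots,1\in\Z_{p^k}^\times$, where $c_1$ is a square in $\Z_{p^k}$ precisely when $B$ is standard. The image of this basis in $B_{red}$ diagonalizes $\spair_{red}$ with entries $\overline{c_1},1,\dots,1\in\Z_p^\times$. By Hensel's lemma a unit of $\Z_{p^k}$ is a square if and only if its reduction modulo $p$ is a square in $\Z_p$, so $\epsilon_1(B_{red})=\epsilon_1(B)$. Since $\epsilon_2$ is determined by $p$ and the rank, both of which are unchanged, we get $\epsilon(B_{red})=\epsilon(B)$.

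With all three hypotheses in hand, Lemma~\ref{lem:specialcase} applies and shows that $\psi_{red}$ has no non-zero fixed points on $B_{red}$. If $\psi$ had a non-zero fixed point $z\in B$, then Lemma~\ref{lem:fixedpasses} would produce a non-zero fixed point of $\psi_{red}$, a contradiction. The main obstacle, a mild one, is the bookkeeping for $\epsilon(B_{red})=\epsilon(B)$: one must verify that the diagonal decomposition of Theorem~\ref{thm:class} is compatible with reduction, which boils down to Hensel's lemma for squares. Everything else is a transparent transfer of the $p$-form statement via the reduction machinery.
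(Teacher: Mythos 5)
Your proposal is correct and follows essentially the same route as the paper: reduce via $\pi$ to the $p$-form $B_{red}$, note that $\pi(\psi)$ still has order $q^r$ (the paper cites Theorem~\ref{thm:reduction}; your kernel-is-a-$p$-group argument via Lemma~\ref{lem:hensel} works equally well), apply Lemma~\ref{lem:specialcase}, and transfer back with Lemma~\ref{lem:fixedpasses}. Your explicit check that $\epsilon(B_{red})=\epsilon(B)$ via diagonalization and Hensel's lemma is a point the paper leaves implicit, and it is verified correctly.
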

\begin{proof}
  If such an isometry $\psi$ has a non-trivial fixed point, then $\pi(\psi)$ has a non-trivial fixed point (and hence a fixed subspace) 
  by Lemma~\ref{lem:fixedpasses}. The isometry $\pi(\psi)$ has the same order $q^r$ because by
  Theorem~\ref{thm:reduction} the reduction operation yields an isomorphism on Sylow $q$-groups. But this contradicts Lemma~\ref{lem:specialcase}.
\end{proof}
The method of the proof of Lemma~\ref{lem:notsospecialanymore} gives the following simple result, which is needed to complete the proof of Theorem~\ref{thm:inverse}.
\begin{lemma}\label{lem:notsospecialafterall}
  Suppose $B$ has rank $2[q|p]$ and $\epsilon(B)=\eta(q)$. Then for any $\ell$ such that $1\le \ell\le s$, there exists an isometry of $B$
  of order $q^\ell$ with no non-zero fixed points.
\end{lemma}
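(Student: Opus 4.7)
The plan is to exhibit a specific element of order $q^\ell$ in $\Sq O(B)$ and prove that it acts without non-zero fixed points. Since $B$ has rank $2[q|p]$ and $\epsilon(B)=\eta(q)$, the number $d=1$ has $q$-adic expansion $a_0=1$, so Theorem~\ref{thm:weir} gives $\Sq O(B)\cong\Z_{q^s}$, which is cyclic. I fix a generator $\phi_0$ of order $q^s$; by Lemma~\ref{lem:notsospecialanymore} applied with $r=s$, $\phi_0$ itself has no non-zero fixed points. I then take $\psi=\phi_0^{q^{s-\ell}}$, an isometry of order $q^\ell$ (since $\ell\le s$), and I aim to prove that $\psi$ has no non-zero fixed points by contradiction.

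Assume $\psi$ has a non-zero fixed point. First, I pass to the reduction: by Lemma~\ref{lem:fixedpasses}, the reduced isometry $\pi(\psi)\in O(B_{red})$ also has a non-zero fixed point, and by Theorem~\ref{thm:reduction} the map $\pi$ sends $\phi_0$ to a generator of $\Sq O(B_{red})\cong\Z_{q^s}$, so $\pi(\psi)=\pi(\phi_0)^{q^{s-\ell}}$ remains of order $q^\ell$. Next, since $B_{red}$ is a $p$-form, I can apply Proposition~\ref{prop:fixedsplit} to $\pi(\psi)$, obtaining an orthogonal decomposition $B_{red}=B_{\mathrm{fix}}\oplus B_{\mathrm{oth}}$ where $\pi(\psi)$ is the identity on $B_{\mathrm{fix}}$ and acts without non-zero fixed points on $B_{\mathrm{oth}}$; by assumption $B_{\mathrm{fix}}\neq 0$. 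Because $\pi(\phi_0)$ commutes with $\pi(\psi)$ (they both lie in the cyclic $\Sq O(B_{red})$) and is an isometry, it preserves $B_{\mathrm{fix}}$ and also $B_{\mathrm{oth}}=B_{\mathrm{fix}}^{\perp}$.

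The final step is a dimension count. On $B_{\mathrm{fix}}$ we have $\pi(\phi_0)^{q^{s-\ell}}=\mathrm{id}$, so the order of $\pi(\phi_0)|_{B_{\mathrm{fix}}}$ divides $q^{s-\ell}$, which is strictly less than $q^s$ since $\ell\ge 1$. Because the total order of $\pi(\phi_0)$ is $q^s$, the restriction $\pi(\phi_0)|_{B_{\mathrm{oth}}}$ must have order $q^s$. Applying Theorem~\ref{21} to $B_{\mathrm{oth}}$ with $r=s$ then forces $\rank B_{\mathrm{oth}}\ge 2[q|p]=\rank B_{red}$, so $B_{\mathrm{fix}}=0$, contradicting our assumption.

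The main obstacle is the passage from $B$ to $B_{red}$: Proposition~\ref{prop:fixedsplit} genuinely fails for $p^k$-forms (as shown by the example with the matrix \eqref{eq:Amatrix}), so the semisimplicity splitting must be carried out on the reduction and then transferred back via Lemma~\ref{lem:fixedpasses}. Once this transfer is in place and one observes that $\pi(\phi_0)$ preserves the decomposition produced by Proposition~\ref{prop:fixedsplit}, the contradiction is essentially forced by the minimality of $\rank B=2[q|p]$ together with Theorem~\ref{21}, echoing exactly the mechanism used in Lemma~\ref{lem:specialcase}.
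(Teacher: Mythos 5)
Your proof is correct, and its engine is the same as the paper's: reduce to a $p$-form via Lemma~\ref{lem:fixedpasses} and Theorem~\ref{thm:reduction}, split off the fixed part with Proposition~\ref{prop:fixedsplit}, and contradict the rank bound of Theorem~\ref{21} on $B_{\mathrm{oth}}$. The difference is tactical: you route the argument through a generator $\phi_0$ of the cyclic Sylow group $\Z_{q^s}$ (Weir), set $\psi=\phi_0^{q^{s-\ell}}$, check that $\pi(\phi_0)$ preserves the splitting, and apply Theorem~\ref{21} with $r=s$ to the restriction of $\pi(\phi_0)$. The paper does not need any of this: it takes an arbitrary isometry $\psi$ of order $q^\ell$ (which exists by Theorem~\ref{21}, since $\operatorname{rank}B=2[q|p]$ and $\epsilon(B)=\eta(q)$) and observes that $\pi(\psi)|_{B_{\mathrm{oth}}}$ itself has order exactly $q^\ell$, while Theorem~\ref{21} already forbids isometries of order $q^\ell$, for \emph{any} $1\le\ell\le s$, on forms of rank strictly less than $2[q|p]$; so $B_{\mathrm{fix}}\neq 0$ is impossible. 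Thus your extra ingredients (cyclicity of $\Sq O(B)$, the choice of $\phi_0$, commutation and invariance of $B_{\mathrm{fix}}$, $B_{\mathrm{oth}}$ under $\pi(\phi_0)$) are sound but removable; what they buy is only the explicit realization of the desired isometry as a power of a Sylow generator, at the cost of invoking Weir's theorem where the lower bound on ranks from Theorem~\ref{21} alone suffices.
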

\begin{proof}
  We act as in the proof of Lemma~\ref{lem:specialcase}. Suppose $B$ is a $p$-form of rank $2[q|p]$. From Theorem~\ref{21} we deduce that there exists an isometry $\psi$ of
  order $q^{\ell}$. If it has a fixed subspace, the form splits as an orthogonal sum of $B_{fix}$ and $B_{oth}$, but $B_{oth}$ has rank less than $2[q|p]$ and hence
  it does not admit any isometry of order $q^\ell$ for $\ell>0$.

  For $p^k$-forms we use the same argument as in Lemma~\ref{lem:notsospecialanymore}.
\end{proof}
Now we can give a proof of Theorem~\ref{thm:inverse}.
\begin{proof}[Proof of Theorem~\ref{thm:inverse}]
  First assume that $r\ge s$. We must have $d\ge q^{r-s}$, for otherwise there is no element in $O(B)$ of order $q^r$ by Theorem~\ref{21}. 
  Let
  $B_{0}$ be the unique form of rank $2[q|p]q^{r-s}$ and $\epsilon(B_0)=\eta(q)^{q^{r-s}}$.
  Take also forms $B_1,\dots,B_{d-q^{r-s}}$ that have rank $2[q|p]$ and index $\eta(q)$. The form $\wt{B}=B_0\oplus B_1\oplus\dots B_{d-q^{r-s}}$ has the same
  rank as $B$ and, by Lemma~\ref{lem:isaddit} we have that $\epsilon(B)=\epsilon(\wt{B})=\eta(q)^{d}$. Hence $B$ and $\wt{B}$ are isometric.

  We construct now an isometry on $\wt{B}$ of order $q^r$ which does not have non-zero fixed points. It is a block sum of isometries on $B_0$ and $B_1,\dots,B_{d-q^{r-s}}$.
  On $B_0$ we use an isometry $\psi_0\colon B_0\to B_0$ with no non-zero fixed points, which is provided by Lemma~\ref{lem:notsospecialanymore}. For $j=1,\dots,d-q^{r-s}$
  we take an isometry $\psi_j\colon B_j\to B_j$ which is of order $q$ and which is provided by Lemma~\ref{lem:notsospecialafterall}. The block sum of such isometries gives an
  isometry of $B$ with no non-zero fixed points and of order $q^{r}$.

  If $r<s$, the proof is analogous. Namely, as $(B,\spair)$ has rank $2[q|p]d$ we can present it as a direct sum of $d$ copies of a form $(B_0,\spair_0)$
  such that $\epsilon(B_0,\spair_0)=\eta(q)$. The form $(B_0,\spair_0)$ admits an isometry $\phi_0$ of order $q^r$ with no non-zero fixed points by Lemma~\ref{lem:notsospecialafterall}.
  The isometry on $B$ is a direct sum of $d$ copies of $\phi_0$.
\end{proof}

\section{Computing $\epsilon(B)$ for forms on the double branched cover}\label{sec:examples}
In order to apply effectively Theorem~\ref{thm:app} we need a way to compute the indices $\epsilon(B)$ of forms associated with linking forms on 
branched covers of links. We present now such an algorithm. A detailed example is given in Section~\ref{sec:concrete}.

Suppose $L$ is a link and $S$ a Seifert matrix. Write $A=S+S^T$. If $\det(A)\neq 0$, then the double branched cover $\Sigma(L)$ is a rational homology sphere.
For the rest of this section we shall assume that this is the case. Let $n$ denote the size of matrix $A$. Then
\[H_1(\Sigma(L);\Z)\cong \Z^n/A\Z^n.\]
Under this identification, the linking form is given by
\begin{equation}\label{eq:pairing}\begin{split}
  \lpair_A\colon\Z^n/A\Z^n\times \Z^n/A\Z^n&\to\Q/\Z\\
  (x,y)&\mapsto x^TA^{-1}y.
\end{split}\end{equation}
  Put $A$ into the Smith Normal Form, that is, write
\[A=CDE,\]
where $C$ and $E$ are invertible over $\Z$ and $D$ is diagonal with integer entries $(d_1,\dots,d_n)$ on the diagonal
such that $d_i|d_{i+1}$ for $i=1,\dots,n-1$.

The following simple result gives an effective way of computing the linking form on $\Sigma(L)$.
\begin{lemma}
  The pairing \eqref{eq:pairing} is isometric to the pairing
  \begin{equation}\label{eq:pairingD}\begin{split}
    \lpair_D\colon\Z^n/D\Z^n\times \Z^n/D\Z^n&\to\Q/\Z\\
(x,y)&\mapsto x^TC^TE^{-1}D^{-1}y.
\end{split}\end{equation}
\end{lemma}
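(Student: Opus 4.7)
The plan is to exhibit an explicit isometry. Since $C$ is invertible over $\Z$, multiplication by $C^{-1}$ is a $\Z$-linear automorphism of $\Z^n$, so it suffices to check that the map $x\mapsto C^{-1}x$ descends to a well-defined homomorphism
\[\ol{\phi}\colon \Z^n/A\Z^n\longrightarrow \Z^n/D\Z^n,\]
and that it intertwines the two pairings. Bijectivity is then automatic, with inverse induced by multiplication by $C$.

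For well-definedness, I would compute $C^{-1}(A\Z^n)=C^{-1}CDE\Z^n=DE\Z^n$. Here the hypothesis that $E$ is invertible over $\Z$ enters in the form $E\Z^n=\Z^n$, so $C^{-1}(A\Z^n)=D\Z^n$, as required.

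To verify that $\ol{\phi}$ is an isometry, I would use the identity $A^{-1}=E^{-1}D^{-1}C^{-1}$ and compute
\[\lpair_D(\ol{\phi}(x),\ol{\phi}(y))=(C^{-1}x)^T\,C^TE^{-1}D^{-1}\,(C^{-1}y)=x^TE^{-1}D^{-1}C^{-1}y=x^TA^{-1}y=\lpair_A(x,y).\]
This is a direct symbolic manipulation using that $(C^{-1})^TC^T$ is the identity.

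The only point where anything nontrivial happens is the reduction $A\Z^n=CD\Z^n$ arising from the $\Z$-invertibility of $E$; beyond that, the argument is mechanical, and I do not anticipate any genuine obstacle. If one wanted a more intrinsic presentation, the map $\ol{\phi}$ can alternatively be viewed as the isomorphism of cokernels induced by the commutative square relating $A$ and $D=C^{-1}AE^{-1}$, but the matrix computation above is the most direct route.
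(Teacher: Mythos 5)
Your proof is correct and is essentially the paper's own argument run in the opposite direction: the paper descends $x\mapsto Cx$ to an isomorphism $\Z^n/D\Z^n\to\Z^n/A\Z^n$ and computes $\lpair_A(Cx,Cy)=(Cx)^T(CDE)^{-1}Cy=\lpair_D(x,y)$, while you use the inverse map $x\mapsto C^{-1}x$ and the same identity $A^{-1}=E^{-1}D^{-1}C^{-1}$. Your explicit check that $C^{-1}(A\Z^n)=D\Z^n$ via $E\Z^n=\Z^n$ is a welcome (if minor) addition of detail that the paper leaves implicit.
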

\begin{proof}
  The map $\Z^n\xrightarrow{x\mapsto Cx}\Z^n$ descends to an isomorphism of abelian groups from $\Z^n/D\Z^n$ to $\Z^n/A\Z^n$.
  Now if $x,y\in\Z^n/D\Z^n$, then $Cx,Cy\in\Z^n/A\Z^n$ and we have $\lpair_A(Cx,Cy)=(Cx)^T(CDE)^{-1}Cy=\lpair_D(x,y)$.
\end{proof}

Let $p$ be an odd prime. Let $0=\alpha_0\le \alpha_1\le\dots$ be the integers such that if $\alpha_k\le i<\alpha_{k+1}$, then $p^k|d_i$
but $p^{k+1}\notdivides d_i$. Let $h_i$ to be the vectors having $0$ at all places except for the $i$-th coordinate, which is
equal to $d_i/p^k$. Define $T_{p,k}$ for the submodule of $\Z^n/D\Z^n$ generated by $h_{\alpha_k},\dots,h_{\alpha_{k+1}-1}$.
Then $T_{p,k}$ is a free $\Z_{p^k}$-module.
The sum $T_{p,1}\oplus T_{p,2}\oplus\dots$ generate the $p$-torsion part of the module $\Z^n/D\Z^n$. 
The notation $T_{p,k}$ reminds of Proposition~\ref{prop:decomp}, we remark that
in the present case the summands do not have to be pairwise orthogonal.

On $T_{p,k}$ we have the linking form that is a restriction of the linking form $\lpair_D$. Write $\spair_{p,k}$ for the associated bilinear form.
In the basis $h_{\alpha_k},\dots,h_{\alpha_{k+1}-1}$, the form can be written by a matrix, which we denote $B_k$. Note that the $ij$-entry
of $B_k$ is given by $p^k\lpair_D(h_{i+\alpha_{k}-1},h_{j+\alpha_k-1})$. 

Write $\wt{T}_{p,k}$ for the \redu{} of $T_{p,k}$ and let $\wt{B}_k$ be the matrix of the reduced form. Clearly we have
\[\det\wt{B}_k\equiv \det B_k\pmod p.\]
%Let  $L_k$ be the matrix of the linking form $\spair_D$ restricted to $T_{p,k}$ and let $B_k$ be the matrix of the associated bilinear form.
%More precisely both $L_k$ and $B_k$ have size $\alpha_{k+1}-\alpha_k$ and the $ij$-entry of $L_k$, respectively of $B_k$ is equal
%to $\spair_D(h_{i+\alpha_{k}-1},h_{j+\alpha_k-1})$, respectively
%Finally, write $R_k$ for $P_k\bmod p$. Note that $R_k$
%is the intersection matrix of the reduction of $B_k$ in the sense of Definition~\ref{def:reduction}. The fact that $\spair_D$ is non-degenerate implies
%that $L_k$ is non-degenerate, in particular $R_k$ is invertible modulo $p$.

%end of revision of 16.06
\begin{proposition}\label{prop:algorithm}
  The linking form on the $p^k$-torsion part of $H_1(\Sigma(K);\Z)$ has $\epsilon_1=+1$ (see Definition~\ref{def:epsilonform})
  if and only if $\det \wt{B}_k$ is a square modulo $p$.
\end{proposition}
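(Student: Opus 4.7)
The plan is to combine Wall's classification of $p^k$-forms (Theorem~\ref{thm:class}) with the observation that the determinant of a Gram matrix is a change-of-basis invariant modulo squares, and then to use Hensel's lemma (for odd $p$) to move between squares in $\Z_{p^k}^*$ and squares in $\Z_p^*$.

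First I would invoke Theorem~\ref{thm:class} and the remark following it to fix a basis of $T_{p,k}$ in which the Gram matrix of $\spair_{p,k}$ is diagonal with entries $(c_1,1,\dots,1)$, where $c_1=1$ precisely when the form is standard (i.e.\ $\epsilon_1=+1$) and $c_1$ is a fixed non-square unit of $\Z_{p^k}$ when $\epsilon_1=-1$. Let $P\in\mathrm{GL}_m(\Z_{p^k})$, with $m=\alpha_{k+1}-\alpha_k$ the rank of $T_{p,k}$, denote the invertible matrix expressing this diagonalizing basis in terms of $h_{\alpha_k},\dots,h_{\alpha_{k+1}-1}$. Then $P^T B_k P=\mathrm{diag}(c_1,1,\dots,1)$, and hence $\det B_k = c_1(\det P)^{-2}$. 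Since $\det P$ is a unit of $\Z_{p^k}$, this shows that $\det B_k$ is a square in $\Z_{p^k}^*$ if and only if $c_1$ is, equivalently if and only if $\epsilon_1=+1$.

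Next I would apply Hensel's lemma, which is valid because $p$ is odd: a unit $u\in\Z_{p^k}^*$ is a square if and only if its reduction $u\bmod p$ is a square in $\Z_p^*$. Since $\det B_k$ and $\det\wt{B}_k$ are both units by non-degeneracy of the form and of its reduction, and $\det B_k\equiv\det\wt{B}_k\pmod p$, combining with the previous step gives the desired equivalence.

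The only point requiring a little care is that the diagonalization provided by Theorem~\ref{thm:class} is genuinely over the ring $\Z_{p^k}$ rather than over a field, so that the change-of-basis matrix $P$ truly lies in $\mathrm{GL}_m(\Z_{p^k})$; but this is exactly the content of Wall's theorem, so the remainder of the argument reduces to elementary bookkeeping with units and squares in $\Z_{p^k}$ and $\Z_p$.
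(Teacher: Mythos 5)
There is a genuine gap: you have implicitly assumed that the modules $T_{p,k}$ cut out by the Smith Normal Form basis $h_{\alpha_k},\dots,h_{\alpha_{k+1}-1}$ are orthogonal summands of the $p$-torsion, so that the Gram matrix $B_k$ of the restricted pairing is the matrix of ``the linking form on the $p^k$-torsion part'' to which $\epsilon_1$ refers. The paper explicitly warns that these $T_{p,k}$ need \emph{not} be pairwise orthogonal with respect to $\lpair_D$; the index $\epsilon_1$ in the statement is the index of the form on the honest orthogonal summand $T'_{p,k}$ produced by Proposition~\ref{prop:decomp}, which is in general a different submodule. Consequently your application of Theorem~\ref{thm:class} to $(T_{p,k},\spair_{p,k})$ is not justified as stated: a priori the restriction of the linking form to $T_{p,k}$ need not even be non-degenerate (you assert that $\det B_k$ is a unit ``by non-degeneracy of the form'', but non-degeneracy of the ambient form does not pass to restrictions to non-orthogonal submodules), and even granting non-degeneracy, nothing in your argument identifies the square class of $\det B_k$ with the square class of the determinant of the form on $T'_{p,k}$.

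Your computation is exactly the first half of the paper's proof, which treats the special case where the $T_{p,k}$ happen to be pairwise orthogonal; there the diagonalization-plus-Hensel bookkeeping you describe is correct and complete. The missing ingredient for the general case is the paper's Lemma~\ref{lem:congruence}: writing $\psi$ for an isometry from $T_{p,1}\oplus T_{p,2}\oplus\dots$ onto an orthogonal splitting $T'_{p,1}\oplus T'_{p,2}\oplus\dots$, one shows that the cross terms $p^i\lpair_D(\psi_{ji}x,\psi_{ji}y)$ vanish modulo $p$ for $j\neq i$ (Lemma~\ref{lem:newlemma}, using that $T'_{p,j}$ is $p^j$-torsion for $j<i$ and that $\psi_{ji}$ lands in $p^{j-i}T'_{p,j}$ for $j>i$), whence $\psi_{ii}$ induces an isometry of the $(\mathrm{mod}\ p)$-reductions and the determinants $\kappa_i$ and $\kappa'_i$ agree modulo $p$ up to a square. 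Only after this comparison can one run your argument on $T'_{p,k}$ and transfer the conclusion to $\det\wt{B}_k$. To repair your proof you would need to supply this comparison (or an equivalent argument showing the square class of $\det\wt B_k$ is independent of whether one uses the Smith basis submodule or an orthogonal splitting).
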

\begin{proof}
%%%%% 
  We first give the proof under an extra assumption that $T_{p,k}$ is orthogonal to $T_{p,k'}$ 
  with respect to the form $\lpair_D$, for all $k,k'$ such that $k\neq k'$. 
  %The matrix $P_k$ is the matrix of the bilinear form $\spair_{p,k}\colon T_{p,k}\times T_{p,k}\to \Z_{p^k}$ induced from the restriction of $\spair_D$ to $P_k$ as in Lemma~\ref{lem:linkingtoortho}.
  By Theorem~\ref{thm:class} there exists a matrix $U$ with coefficients in $\Z_{p^k}$ such that $UB_kU^{T}$ is a diagonal matrix, whose all the 
  diagonal entries but the first one are equal to $1$,
  and the top-left entry is either $1$ (if $\epsilon_1=1$) or a non-square modulo $p^k$, if $\epsilon_1=-1$. Thus, the index $\epsilon_1$ depends on whether $\det UB_kU^T$ is
  a square modulo $p^k$ or not. But $\det B_k$ and $\det UB_kU^T$ differ by a square. Therefore $\epsilon_1=1$ if and only if $\det B_k$ is a square modulo $p^k$. By Hensel's lemma
  this is the same as saying that $\det \wt{B}_k$ is a square modulo $p$.
%%%
  
  Now consider the general case, where $T_{p,k}$ are not necessarily orthogonal. Write $T_p=T_{p,1}\oplus T_{p,2}\oplus\dots$.
  By Proposition~\ref{prop:decomp}, 
  $T_p$ splits as a sum $T'_{p,1}\oplus T'_{p,2}\oplus \dots$ of pairwise orthogonal summands (with respect to the linking form $\lpair_D$) and such that $T'_{p,j}$
  is a free $\Z_{p^j}$-module. We have thus an isometry 
  \[\psi\colon T_{p,1}\oplus T_{p,2}\oplus \dots \to T'_{p,1}\oplus T'_{p,2}\oplus\dots.\]
  Write $\psi_{ij}$ for the part of $\psi$ mapping from $T_{p,i}$ to $T'_{p,j}$. Choose bases $e_{i1},\dots,e_{ik_i}$, respectively $f_{jk_j},\dots,f_{jk_j}$ of $T_{p,i}$, respectively
  $T'_{p,j}$ (for $e_{ij}$ we can take vectors $h_d$ defined before, but here we need a more concise notation). With respect to these bases we denote by
  $\kappa_i$ the determinant of the intersection form on $T_{p,i}$ and $\kappa'_j$ for the determinant of the intersection form on $T_{p,j}'$.
  \begin{lemma}\label{lem:congruence}
    There is a congruence $\kappa'_i\equiv q_i^2\kappa_i\pmod{p}$ for some $q_i$ invertible modulo $p$.
  \end{lemma}
  Given Lemma~\ref{lem:congruence} we quickly finish the proof of Proposition~\ref{prop:algorithm}. Namely,
  \begin{itemize}
    \item The first part of the proof of Proposition~\ref{prop:algorithm} applied to $T'_{p,i}$ tells that $\epsilon_1=1$ if and only if $\kappa'_i$ is a square modulo $p$;
    \item Lemma~\ref{lem:congruence} implies that $\epsilon_1=1$ if and only if $\kappa_i$ is a square modulo $p$;
    \item The bases $e_{i1},\dots,e_{ik_i}$ and $h_{\alpha_i},\dots,h_{\alpha_{i+1}-1}$ differ by an invertible matrix $U_i$. Hence $\kappa_i=\det U_i\det B_i\det U^T_i$,
      so $\kappa_i=\det B_i(\det U_i)^2$. That is, $\kappa_i$ is a square modulo $p$ if and only if $\det B_i$ is a square modulo $p$.
      %In particular $\kappa_i\equiv\det P_i(\det C_i)^2\pmod{p}$. So $\kappa_i$ is a square modulo $p$ if and only if $\det P_i\bmod p$ is,
      %if and only if $\det R_i$ is a square modulo $p$.
  \end{itemize}

\end{proof}
It remains to prove Lemma~\ref{lem:congruence}.
\begin{proof}[Proof of Lemma~\ref{lem:congruence}]
  Take $x,y\in T_{p,i}$. We have
  \[p^i\lpair_D(x,y)=p^i\lpair_D(\psi x,\psi y)=p^i\lpair_D(\sum_j \psi_{ji}x,\sum_k \psi_{ki}y).\]
  The spaces $T_{p,j}'$ are pairwise orthogonal, so we rewrite the last sum as
  \begin{equation}\label{eq:plambda} p^i\lpair_D(x,y)=p^i\sum_{j}\lpair_D(\psi_{ji}x,\psi_{ji}y).\end{equation}
  \begin{lemma}\label{lem:newlemma}
    If $j\neq i$ we have 
  \begin{equation}\label{eq:jsmall}
    p^i\lpair_{D}(\psi_{ji}x,\psi_{ji}y)\equiv 0\pmod{p}.
  \end{equation}
\end{lemma}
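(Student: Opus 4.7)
The plan is to unpack what the condition $p^i\lpair_D(\psi_{ji}x,\psi_{ji}y)\equiv 0\pmod p$ is really saying, and then handle the two sub-cases $j<i$ and $j>i$ separately. First I would note that since $x,y\in T_{p,i}$ satisfy $p^i x = p^i y = 0$ and $\psi_{ji}$ is $\Z$-linear, the value $\lpair_D(\psi_{ji}x,\psi_{ji}y)\in\Q/\Z$ has order dividing $p^i$, so it can be written uniquely as $a/p^i$ with $a\in\Z/p^i\Z$. The notation $p^i\lpair_D(\psi_{ji}x,\psi_{ji}y)$ refers to this integer $a$, and the congruence $a\equiv 0\pmod p$ is equivalent to the statement that $\lpair_D(\psi_{ji}x,\psi_{ji}y)$ has order dividing $p^{i-1}$.

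In the case $j<i$: the module $T'_{p,j}$ is free over $\Z_{p^j}$, so $p^j$ annihilates both $\psi_{ji}x$ and $\psi_{ji}y$. Hence $\lpair_D(\psi_{ji}x,\psi_{ji}y)$ has order dividing $p^j$, which is at most $p^{i-1}$, and we are done.

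In the case $j>i$: the elements $\psi_{ji}x,\psi_{ji}y\in T'_{p,j}$ still satisfy $p^i \psi_{ji}x = 0$ and $p^i \psi_{ji}y = 0$. In a free $\Z_{p^j}$-module with $j>i$, the elements of order dividing $p^i$ form the submodule $p^{j-i}T'_{p,j}$, so I can write $\psi_{ji}x = p^{j-i}u$ and $\psi_{ji}y = p^{j-i}v$ for some $u,v\in T'_{p,j}$. Bilinearity of $\lpair_D$ then gives $\lpair_D(\psi_{ji}x,\psi_{ji}y)=p^{2(j-i)}\lpair_D(u,v)$. Since $\lpair_D(u,v)$ has order dividing $p^j$, the product $p^{2(j-i)}\lpair_D(u,v)$ has order dividing $p^{\max(2i-j,0)}$, and because $j\geq i+1$ this is at most $p^{i-1}$, as required.

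The proof is elementary once the bookkeeping is in place, so there is no real obstacle beyond fixing the interpretation of $p^i\lpair_D$ as a well-defined element of $\Z/p^i\Z$ (which is legitimate precisely because the arguments come from $T_{p,i}$). The only mildly subtle point is the sub-case $i<j<2i$, where one must check that the inequality $2i-j\leq i-1$ really does follow from $j\geq i+1$; both sub-cases $j\geq 2i$ and $i<j<2i$ are then handled uniformly by the same formula.
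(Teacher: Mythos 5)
Your proof is correct and follows essentially the same route as the paper: split into $j<i$ and $j>i$, use that $T'_{p,j}$ is $p^j$-torsion in the first case, and in the second case use that elements of $T'_{p,j}$ (a free $\Z_{p^j}$-module) killed by $p^i$ are divisible by $p^{j-i}$, then conclude by bilinearity. Your explicit remark that $p^i\lpair_D(\psi_{ji}x,\psi_{ji}y)$ is well defined in $\Z/p^i\Z$ is a welcome clarification of the paper's implicit convention, but it does not change the argument.
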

\begin{proof}[Proof of Lemma~\ref{lem:newlemma}]
  Suppose $j<i$.
  We have $\psi_{ji}x,\psi_{ji}y\in T'_{p,j}$, which is a $p^j$-torsion part. In particular
  $\lpair_D(\psi_{ji}x,\psi_{ji}y)=\frac{c}{p^j}$ for some integer $c$. As $i>j$ we obtain \eqref{eq:jsmall} immediately.
  
  Suppose $j>i$. Both $x,y$ are annihilated by $p^i$. Consequently, $\psi_{ji}x,\psi_{ji}y$ are annihilated by $p^i$. As $T'_{p,j}$ is a free $\Z_{p^j}$-module
  we infer that $\psi_{ji}x=p^{j-i}x_j$, $\psi_{ji}y=p^{j-i}y_j$ for some elements $x_j,y_j\in T'_{p,j}$. We also have $\lpair_D(x_j,y_j)=\frac{c'}{p^j}$ for $c'\in\Z$. Hence
  \[\lpair_D(\psi_{ji}x,\psi_{ji}y)=p^{2j-2i}\lpair_D(x_j,y_j)=c'p^{j-2i}.\]
  Therefore $p^i\lpair_D(\psi_{ji}x,\psi_{ji}y)=c'p^{j-i}\equiv 0\pmod{p}$, so \eqref{eq:jsmall} holds also for $j>i$.
\end{proof}
Coming back to the proof of Lemma~\ref{lem:congruence}, from Lemma~\ref{lem:newlemma} we obtain
  \begin{equation}\label{eq:psiisometry} p^i\lpair_D(x,y)\equiv p^i\lpair_D(\psi_{ii}x,\psi_{ii}y)\pmod{p}.\end{equation}
  Write $\wt{T}_{p,i}$, respectively $\wt{T}_{p,i}'$ for the  reductions of the forms on $T_{p,i}$ and $T_{p,i}'$. Equation~\eqref{eq:psiisometry} implies
  that $\psi_{ii}$ induces an isometry between $\wt{T}_{p,i}$ and $\wt{T}_{p,i}'$. 
  As the forms $\wt{T}_{p,i}$ and $\wt{T}_{p,i}'$ are isometric, their determinants differ by a square modulo $p$.
\end{proof}

We conclude this section with the following remark. In the proof of Proposition~\ref{prop:algorithm} we did not assume that the splitting of $T_p=T_{p,1}\oplus\dots$
is invariant with respect to the group action. In fact, in the proof of Proposition we showed that the sign of the linking form on the $p^k$-torsion part of $H_1(\Sigma(K);\Z)$
does not depend on the particular choice of splitting of $T_p$ into a sum $T_{p,1}\oplus T_{p,2}\oplus\dots$.

\section{Example. Knot $10_{123}$}\label{sec:concrete}
Our aim is now to illustrate the algorithm described in Section~\ref{sec:examples} on a concrete knot. There are many knots
that pass Naik's obstructions but fail to the obstruction provided by Theorem~\ref{thm:app}, see Section~\ref{sec:statistics}. However
all such knots that are known to us have Seifert matrices of size at least $14\times 14$. In order to make the discussion in
this section more transparent, we decided to provide an example of a knot that actually is 5-periodic. Its Seifert matrix has size only $8\times 8$. We will show,
how did we verify that it passes our criterion.

\smallskip
Consider the knot $K=10_{123}$. It is well known to be 5-periodic, a 5-periodic diagram can be found in \cite[Figure 2]{JabukaNaik} or on the
KnotInfo webpage \cite{knotinfo}. 
The Alexander polynomial of $K$ is equal to
\[\Delta=t^8 - 6t^7 + 15t^6 - 24t^5 + 29t^4 - 24t^3 + 15t^2 - 6t + 1=(t^4 - 3t^3 + 3t^2 - 3t + 1)^2.\]
There are two  factors of the Alexander polynomial over $\Z[t,t^{-1}]$. One is $1$, the other one is $t^4-3t^3+3t^2-3t+1$.
We check that the latter does not satisfy \eqref{eq:congknot} for any $\ell$. On the other hand, we have a congruence
\[\Delta\equiv (1+t+t^2)^4\bmod 5,\]
so $\Delta$ (rather unsurprisingly) passes the Murasugi criterion. The polynomial $1$ is the only candidate for the Alexander polynomial of the quotient. 
In particular, by Proposition~\ref{prop:fixed}, the $\Z_5$ symmetry should act on $H_1(\Sigma(K);\Z)_p$ with only $0$ as a fixed point for all $p\neq 5$.

We study the homology of the double branched cover. The Seifert matrix of $K$ is
\[
 S= \begin{pmatrix}
-1 & 0 & 0 & 0 & 0 & 0 & 0 & 0 \\
1 & -1 & 0 & 0 & 0 & 0 & 0 & 0 \\
0 & 1 & -1 & 0 & 0 & 0 & 0 & 0 \\
0 & 0 & 1 & -1 & 0 & 0 & 0 & 0 \\
-1 & 1 & 0 & 0 & 1 & -1 & 0 & 0 \\
0 & -1 & 1 & 0 & 0 & 1 & -1 & 0 \\
0 & 0 & -1 & 1 & 0 & 0 & 1 & -1 \\
0 & 0 & 0 & -1 & 0 & 0 & 0 & 1
\end{pmatrix}.
\]
We have
\begin{equation*}
  A=S+S^T=CDE,
\end{equation*}
where $D$ is diagonal matrix with the diagonal vector $(1,1,1,1,1,1,11,11)$ and $C$ and $E$ are invertible over $\Z$.
We see that the homology of the double branched cover is equal to $\Z_{11}^2$. Now for $p=11$ and $q=5$ we have $[q|p]=1$, because $5|(11-1)$. In particular $10_{123}$ passes the Naik's criterion. Note also, for future
use, that $\eta(q)=1$. 

We calculate the index of the linking form on $\Z_{11}^2$. The generators for the module $\Z^8/D\Z^8$ are $h_1=(0,0,0,0,0,0,1,0)$ and $h_2=(0,0,0,0,0,0,0,1)$.
The matrix of the linking form in this basis is given by the $2\times 2$ square submatrix of $C^TE^{-1}D^{-1}$ in the bottom left corner:
\[
\begin{pmatrix}

\begin{tikzpicture}[every node/.style={minimum width=1.5em}]
\matrix (m1) [matrix of math nodes]
{% 
-46 & -20 & -60 & -52 & -62 & -51 & -4 & -5 \\
-20 & -12 & -35 & -34 & -39 & -31 & -3 & -2 \\
-60 & -35 & -98 & -93 & -107 & -85 & -8 & -6 \\
-52 & -34 & -93 & -90 & -103 & -81 & -8 & -5 \\
-62 & -39 & -107 & -103 & -118 & -93 & -9 & -6 \\
-51 & -31 & -85 & -81 & -93 & -74 & -7 & -5 \\
-4 & -3 & -8 & -8 & -9 & -7 & -\frac{8}{11} & -\frac{4}{11} \\
-5 & -2 & -6 & -5 & -6 & -5 & -\frac{4}{11} & -\frac{6}{11} \\};
\fill[opacity=0.3,yellow] (m1-7-7.north west) -- (m1-7-8.north east) -- (m1-8-8.south east) -- (m1-8-7.south west) --cycle;
\end{tikzpicture}
\end{pmatrix}.
\]
The associated bilinear form $(B,\spair)$ on $\Z_{11}^2$ has matrix
\[
  \begin{pmatrix}
    -8 & -4\\
    -4 & -6
  \end{pmatrix}.
\]
The determinant of this matrix is equal to $32\equiv 10\bmod 11$, so it is not a square modulo $11$, so the form has $\epsilon_1=-1$.
However, the rank $n=2$ is congruent to $2\bmod 4$ and $11\equiv 3\bmod 4$, so according to the definition of the index (Definition~\ref{def:epsilonform}) we have
$\epsilon_2=-1$, so $\epsilon(B)=1$.
Then $\epsilon(B)=\eta(q)$. The knot passes our criterion.

\section{Theorem~\ref{thm:app} for low crossing knots}\label{sec:statistics}
We applied the following criteria for knots up to 15 crossings and for periods $q=3,5,7,11,13$. The Sage script that we used is
available in \cite{maria}.
\begin{itemize}
  \item Przytycki's criterion for HOMFLYPT polynomials \cite{Przytycki-periodic};
  \item Murasugi's criterion as stated in Theorem~\ref{thm:murasugi};
  \item Naik's homological criterion in the way stated in Propositions~\ref{prop:naik1} and~\ref{prop:naik2};
  \item Theorem~\ref{thm:app}.
\end{itemize}
The way we applied these criteria is as follows. For a given knot $K$ and period $q$ we checked the Przytycki's criterion.
Independently, we
were
looking for all candidates $\Delta'$ for the Alexander polynomial of the quotient, that is, those that satisfy the statement of Theorem~\ref{thm:murasugi}.
For each such $\Delta'$ we checked 
if the criterion of Proposition~\ref{prop:naik1} is satisfied for all prime numbers $p$ different than $2$ and $q$ that are also factors of $\Delta(-1)$.
If $\Delta'$ passed the criterion of Proposition~\ref{prop:naik1} we checked if $K$ passes Proposition~\ref{prop:naik2} with this $\Delta'$ for
all prime numbers $p\neq 2,q$ that divide $\Delta(-1)$ but do not divide $\Delta'(-1)$.

If $\Delta'$ passed also the criterion of Proposition~\ref{prop:naik2} we were looking at Theorem~\ref{thm:app} for all prime numbers $p\neq 2,q$ such that
$p|\Delta(-1)$ but $p$ does not divide $\Delta'(-1)$.

If at some moment $\Delta'$ did not pass the criterion, it was discarded from a list of potential Alexander polynomials of the quotient. If the list
was empty, we concluded that the knot $K$ is not $q$-periodic. We recorded, if this conclusion is achieved using Proposition~\ref{prop:naik1} only,
or one needs Proposition~\ref{prop:naik2} or even Theorem~\ref{thm:app} to obstruct $q$-periodicity.

It turns out that for periods greater than $5$, Theorem~\ref{thm:app} did not obstruct any case that was not obstructed by combined Murasugi's and Naik's criteria. 

For period $5$, our criterion obstructed knots $14n26993$, $15a80526$, $15n83514$ and $15n95792$, but all of these knots are not $5$-periodic by Przytycki's criterion.

The most interesting situation was for period $3$. Here we were able to obstruct knots $12a100$ and $12a348$, which can also be obstructed using Jabuka and Naik's $d$-invariants
criterion, see \cite[Section 2.4]{JabukaNaik} for a detailed discussion of $12a100$ knot. 

There are also 19 alternating knots with crossing numbers from 13 to 15, whose $3$-periodicity is obstructed by Theorem~\ref{thm:app}, but not by Naik's criterion
and also not by Przytycki's criterion. These are:

\smallskip
\begin{tabular}{lllll}
$  13a4648$&
$  14a7583$&
$  14a7948$&
$14a8670$&
$14a9356$\\
$14a14971$&
$14a16311$&
$14a17173$&
$14a17260$&
$14a18647$\\
$15a6030$&
$15a6066$&
$15a10622$&
$15a15077$&
$15a33910$\\
$15a36983$&
$15a46768$&
$15a72333$&
$15a82771$.
\end{tabular}

Among non-alternating knots with $12$--$15$ crossings, there are 57 knots whose $3$-periodicity can be obstructed by Theorem~\ref{thm:app}, but for which Naik's criterion
and Przytycki's critierion do not obstruct $3$-periodicity. These are
$13n3659$,
$14n908$,
$14n913$,
$14n2451$,
$14n2458$,
$14n6565$,
$14n9035$,
$14n11989$,
$14n14577$,
$14n23051$ and
$14n24618$, as well as further $46$ knots with $15$ crossings. 

We note that for these non-alternating examples the Jabuka--Naik criterion \cite{JabukaNaik} cannot be easily applied, because it requires calculating $d$-invariants of double branched covers of non-alternating knots,
for which we do not have any algorithm so far. Another criterion involving knot homology, namely Khovanov homology, see \cite{BorodzikPolitarczyk}, does not work for period $3$, so it
cannot obstruct periodicity of these $57$ knots.

We have also applied Theorem~\ref{thm:app} to obstruct $3^2$-periodicity of knots, but we could not find any example, where Theorem~\ref{thm:app} could obstruct periodicity
of any knot that passed the Murasugi's criterion for period $9$. To find
interesting examples, one probably needs to consider knots with much more crossings.
\bibliographystyle{natplain}
\def\MR#1{}
\bibliography{research2}
\end{document}